\numberwithin{equation}{section}
\numberwithin{subsection}{section}
\newtheorem{thm}[subsection]{Theorem}
\newtheorem*{thm*}{Theorem}
\newtheorem{lemma}[subsection]{Lemma}
\newtheorem{prop}[subsection]{Proposition}
\newtheorem{cor}[subsection]{Corollary}
\theoremstyle{definition}
\newtheorem{defn}[subsection]{Definition}
\newtheorem{remark}[subsection]{Remark}
\newcommand{\multH}{\mr{Mult} \, \cH}
\newcommand{\be}{\begin{equation}}
\newcommand{\ee}{\end{equation}}
\newcommand{\ba}{\begin{eqnarray}}
\newcommand{\ea}{\end{eqnarray}}
\newcommand{\bal}{\begin{align*}}
\newcommand{\eal}{\end{align*}}
\newcommand{\baln}{\begin{align}}
\newcommand{\ealn}{\end{align}}
\newcommand{\bi}{\begin{itemize}}
\newcommand{\ei}{\end{itemize}}
\newcommand{\bn}{\begin{enumerate}}
\newcommand{\en}{\end{enumerate}}
\newcommand{\bbm}{\begin{bmatrix}}
\newcommand{\ebm}{\end{bmatrix}}
\newcommand{\bpm}{\begin{pmatrix}}
\newcommand{\epm}{\end{pmatrix}}
\newcommand{\bsm}{\left ( \begin{smallmatrix}}
\newcommand{\esm}{\end{smallmatrix} \right) }
\newcommand{\bp}{\begin{proof}}
\newcommand{\ep}{\end{proof}}
\newcommand{\nn}{\nonumber}
\newcommand{\mr}{\ensuremath{\mathrm}}
\newcommand{\mc}{\ensuremath{\mathcal}}
\renewcommand{\a}{\ensuremath{\alpha }}
\def\C{\mathbb{C}}
\def\nbdom{\mathrm{Dom} \, }
\def\nbran{\mathrm{Ran} \, }
\def\nbker{\mathrm{Ker} \, }
\newcommand{\ov}{\ensuremath{\overline}}
\newcommand{\cH}{\ensuremath{\mathcal{H}}}
\newcommand{\cJ}{\ensuremath{\mathcal{J} }}
\def\cK{\mathcal{K}}
\newcommand{\dom}[1]{\ensuremath{\mathrm{Dom} ({#1}) }}
\newcommand{\ran}[1]{\ensuremath{\mathrm{Ran} \left( {#1} \right) }}
\title{Unbounded multipliers of complete Pick spaces}
\author{Michael T. Jury}
\address{University of Florida}
\email{mjury@ufl.edu}
\author{Robert T.W. Martin}
\address{University of Manitoba}
\email{rtwmartin@gmail.com}
\begin{document}
\date{\today}
\begin{abstract}
We examine densely defined (but possibly unbounded) multiplication operators in Hilbert function spaces possessing a complete Nevanlinna-Pick (CNP) kernel. For such a densely defined operator $T$, the domains of $T$ and $T^*$ are reproducing kernel Hilbert spaces contractively contained in the ambient space. We study several aspects of these spaces, especially the domain of $T^*$, which can be viewed as analogs of the classical deBranges-Rovnyak spaces in the unit disk. 
  \end{abstract}
\thanks{First named author supported by NSF grant DMS-1900364}
\thanks{Second named author supported by NSERC grant 2020-05683}
\maketitle

\section{Introduction}\label{sec:intro}

A {\em Hilbert function space}, or \emph{reproducing kernel Hilbert space (RKHS)} is a Hilbert space $\mathcal H$ consisting of functions $f:X\to \mathbb C$ on some set $X$, with the property that for each $x\in X$, the point evaluation $f\to f(x)$ is bounded for the norm in $\mathcal H$. 

For any Hilbert function space $\mathcal H$ over a set $X$, we may consider its {\em multipliers}, which are the functions $\varphi:X\to \mathbb C$ for which $\varphi f\in \mathcal H$ whenever $f\in \mathcal H$. By the closed graph theorem, if $\varphi$ is a multiplier of $\mathcal H$ then the operator
\[
M_\varphi:f\to \varphi f
\]
is bounded on $\mathcal H$, and it is then straightforward to show that the function $\varphi$ is necessarily bounded on $X$, with the estimate
\[
|\varphi(x)|\leq \|M_\varphi \| \quad \text{ for all } x\in X.
\]
A basic problem in the theory is to characterize the set of multipliers $\mr{Mult} \, \cH$ for a given Hilbert function space $\mathcal H$. For example, when $\mathcal H$ is the classical Hardy space $H^2$ in the unit disk, it is well known that $\mr{Mult} \, H^2 $ coincides with the algebra $H^\infty$ of all bounded functions in the unit disk.

In this paper we are concerned with objects we will call {\em unbounded multipliers}, by which we mean functions $h:X\to \mathbb C$ for which we do not assume $hf$ belongs to $\mathcal H$ for all $f\in \mathcal H$, but only for $f$ lying in some dense subspace of $\mathcal H$.

We have two primary motivations: the first is a theorem of Su\'arez and Sarason \cite{sarason-2008}, which says that for the classical Hardy space $H^2$ in the unit disk $|z|<1$, a function $h$ gives a densely defined multiplier of $H^2$ if and only if it belongs to the {\em Smirnov class} $N^+$. This is the set of all analytic functions $h$ in the disk which can be represented in the form
\[
h= \frac{b}{a}
\]
where $a,b$ are bounded analytic functions in the disk, and additionally $a$ is a so-called {\em outer function}, which means that $a\cdot H^\infty$ is dense in $H^2$. Thus, given any Hilbert function space $\mathcal H$, one may ask for a description of the unbounded (densely defined) multipliers.

The second motivation comes from the theory of deBranges-Rovnyak spaces, and another theorem of Sarason which connects these spaces to unbounded multipliers of $H^2$. First of all, it is proved in \cite[Proposition 3.1]{sarason-2008} that if $h=b/a$ is a function in the Smirnov class, the functions $a,b$ can be chosen to be bounded by $1$ in the disk, and in such a way that
\begin{equation}\label{eqn:pythagorean-mate}
  |a(z)|^2+|b(z)|^2 =1 \quad \text{for a.e. } |z|=1.
\end{equation}
The $a,b$ with these additional properties are unique up to an overall unimodular constant factor. To the functions $a$ and $b$ one can associate two Hilbert function spaces in the unit disk, denoted $\mathcal M(a)$ and $\mathcal H(b)$, which are the spaces with reproducing kernels
\begin{equation}\label{eqn:ma-hb-kernels}
  \frac{a(z)a(w)^*}{1-zw^*}, \quad \frac{1-b(z)b(w)^*}{1-zw^*}
\end{equation}
respectively. Both of these spaces are contractively contained in the Hardy space $H^2$.  The space $\mathcal H(b)$ is called the {\em deBranges-Rovnyak space} associated to $b$, and the theory of such spaces takes on a rather different character if $b$ is either an extreme point of the unit ball of $H^\infty$, or non-extreme. The condition (\ref{eqn:pythagorean-mate}) implies that, in the case we are discussing, the function $b$ is not an extreme point of the unit ball of $H^\infty$. The important observation is that, when $b$ is non-extreme, there is always a unique (up to a unimodular constant) outer function $a$ so that (\ref{eqn:pythagorean-mate}) holds. Returning now to the Smirnov function $h=b/a$, what is proved in \cite[Propositions 5.3 and 5.4]{sarason-2008} is that the domain of the unbounded multiplication operator $T$ is precisely $\mathcal M(a)$, and the domain of $T^*$ is precisely $\mathcal H(b)$, and moreover these identifications are isometric if the domains of the (closed) unbounded operators are equipped with the graph norm. (For the result about $T^*$, see also \cite{Suarez}.) In particular, every $\mathcal H(b)$ space associated to a non-extreme $b$ arises as the domain of the adjoint of an unbounded multiplier. It is remarkable that many of the standard properties of the non-extreme $\mathcal H(b)$ spaces can be proved by adopting this point of view.

Now, since spaces with complete Pick kernel (which we review in the following subsection) are in many ways analogous to $H^2$, it is natural to wonder how much (if any) of this theory can be carried over. Analogues of the $\mathcal H(b)$ in complete Pick spaces have been considered in \cite{jury-2014}, where an analogue of the extreme/non-extreme distinction is observed, though at that time we were able to prove little about the non-extreme case. The main theme of the present paper is that if we re-interpret the non-extreme $\mathcal H(b)$, considering them as defined not by the form of the kernel as in (\ref{eqn:ma-hb-kernels}) but as the domains of $T^*$, for $h$ an unbounded multiplier, then many of the features of the theory of the non-extreme $\mathcal H(b)$ spaces do in fact carry over to the setting of complete Pick kernels. Thus, our program is to investigate unbounded multipliers of CNP spaces, and their adjoints, and in particular to study properties of their domains, viewed as Hilbert function spaces analogous to $\mathcal M(a)$ and $\mathcal H(b)$.

\subsection{Spaces with complete Pick kernel}  By a {\em CNP} kernel on a set $X$ we mean a reproducing kernel of the form
\[
  k(x,y) =\frac{1}{1-\langle u(x), u(y)\rangle}
\]
where $u:X\to \ell^2(I)$ is an injective function from $X$ into the Hilbert space $\ell^2(I)$ (here $I$ is some index set), for which $\|u(x)\|<1$ for all $x\in X$. Letting $E$ denote the range of $u$, we see that the Hilbert function space $\mathcal H(k)$ on $X$ is identified with the closed subspace of $H^2_I$ spanned by the restrictions of Drury-Arveson kernel functions $k(z,w)=\frac{1}{1-\langle z,w\rangle}$ to the set $E$. By this mechanism, CNP spaces inherit many properties of the $H^2_I$ spaces. In particular we will make use of three key facts about CNP spaces: first, a general form of the Nevanlinna-Pick interpolation theorem holds \cite{ball-trent-vinnikov-2001}, second, there is a Beurling-type theorem characterizing the closed, $\mr{Mult} \, \cH$-invariant subspaces of $\cH$ \cite[]{mccullough-trent-2000}, and third, every $f\in \cH$ belongs to the {\em Smirnov class $N^+(\cH)$}; this means that for every $f\in \cH$ there exist $\varphi, \psi\in \multH$ so that $f=\frac{\varphi}{\psi}$. For this there are two different proofs, see either \cite[Theorem 1.1]{AHMR-2017} or \cite[Corollary 3.4]{jury-martin-2021}. Finally, we note that by construction, our CNP spaces $\cH$ will always contain the constant functions, so that $\multH\subset\cH$.

\subsection{Readers' guide} In Section~\ref{sec:affiliated} we prove the basic characterization theorem (Theorem~\ref{thm:affiliated-implies-multiplication-CNP}), which says that a densely defined operator in a CNP space, which commutes with all bounded multipliers, must have the form $f\to hf$ for some function $h$; moreover we show that there exist bounded multipliers $a,b$ so that $b=ah$. In Section~\ref{sec:kernels} we study the domains of an unbounded multiplier $T$ and its adjoint $T^*$ as reproducing kernel Hilbert space contractively contained in $\mathcal H$. In particular we show, as in the case of the classical deBranges-Rovnyak spaces, the reproducing kernels $k_x$ for $\mathcal H$ are contained in $\nbdom T^*$ and their span is dense in $\nbdom T^*$. In Section~\ref{sec:representing-pairs} we examine the functions $a,b$ satisfying $b=ah$ more closely, giving a parameterization of all such pairs and we obtain some containment relations between various reproducing kernel Hilbert spaces associated to them. Sections~\ref{sec:corona} and \ref{sec:invariant-subspaces} explore further properties of $\nbdom T^*$, and we generalize two theorems of Sarason about deBranges-Rovnyak spaces to the present setting. In Section~\ref{sec:constructive} we give another, more constructive proof that the kernels are dense in $\nbdom T^*$, which can also be used to show that the polynomials are dense in $\nbdom T^*$ in certain cases.  Finally in Section~\ref{sec:questions} we make some final remarks and pose some questions. 

\section{Operators affiliated to $\mr{Mult} \, \cH$}\label{sec:affiliated}

\begin{prop}\label{prop:multipliers-are-closeable-CNP}
  Let $\mathcal H$ be a reproducing kernel Hilbert space on a set $X$. If $h:X\to \mathbb C$ is a function such that the multiplication operator
  \[
    f\to hf
  \]
  is densely defined in $\mathcal H$, then $h$ is closeable, in particular $f\to hf$ is closed when equipped with its maximal domain
  \[
    \nbdom h =\{f\in \mathcal H: hf\in\mathcal H\}
  \]
\end{prop}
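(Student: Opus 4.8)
The plan is to show that the graph of $M_h$ on its maximal domain $\dom{h}$ is closed, which immediately implies $h$ is closeable (indeed its maximal-domain realization is already closed). So suppose $f_n \in \dom{h}$ with $f_n \to f$ in $\cH$ and $h f_n \to g$ in $\cH$; I must show $f \in \dom{h}$ and $hf = g$. The key observation is that convergence in $\cH$ implies pointwise convergence on $X$, because point evaluation at each $x \in X$ is a bounded linear functional (this is the defining property of an RKHS, and is exactly $\langle f, k_x\rangle$). Hence $f_n(x) \to f(x)$ and $(hf_n)(x) \to g(x)$ for every $x \in X$.

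Now fix $x \in X$. We have $(h f_n)(x) = h(x) f_n(x)$. Taking $n \to \infty$, the left side tends to $g(x)$ and $f_n(x) \to f(x)$, so $h(x) f_n(x) \to h(x) f(x)$; by uniqueness of limits, $g(x) = h(x) f(x)$. Since this holds for every $x \in X$, we conclude $g = hf$ as functions on $X$. But $g \in \cH$ by hypothesis, so $hf \in \cH$, i.e. $f \in \dom{h}$, and $M_h f = hf = g$. This shows the graph of $M_h$ (with domain $\dom{h}$) is closed in $\cH \oplus \cH$.

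Finally, I should note why this gives closeability in the stated sense: the hypothesis is that $f \to hf$ is densely defined, meaning it is defined (and lands in $\cH$) on some dense subspace $\mathcal D \subseteq \cH$; necessarily $\mathcal D \subseteq \dom{h}$, so the densely defined operator $(M_h, \mathcal D)$ extends to the closed operator $(M_h, \dom{h})$, hence is closeable, and its closure is contained in the maximal-domain operator. (In fact the maximal-domain operator is precisely the closure, since any closed extension of $(M_h,\mathcal D)$ acting as multiplication by $h$ cannot have domain larger than $\dom{h}$, and is contained in $(M_h,\dom h)$.)

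There is essentially no obstacle here: the only point requiring care is to invoke the RKHS structure to pass from norm convergence to pointwise convergence, and then to observe that multiplication by the \emph{fixed} scalar $h(x)$ is continuous at each point. Everything else is bookkeeping about maximal domains and the definition of closeability.
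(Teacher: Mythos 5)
Your argument is correct and is essentially the paper's proof: both rest on the single observation that norm convergence in an RKHS implies pointwise convergence, so that a graph-limit $(f,g)$ must satisfy $g(x)=h(x)f(x)$ everywhere; the paper just runs this with $f=0$ to verify the closability criterion, while you verify closedness of the maximal-domain operator directly (which also covers the ``in particular'' clause of the statement). One small caveat: your final parenthetical claim that the maximal-domain operator is \emph{precisely} the closure of $(M_h,\mathcal D)$ does not follow from what you wrote --- the argument only shows the closure is \emph{contained} in $(M_h,\nbdom h)$, and whether a given dense domain $\mathcal D$ is a core for the maximal operator is a genuinely separate question (the paper needs additional hypotheses to settle it in Corollary~\ref{corecor}). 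This overreach is harmless here since the proposition only asserts closability and closedness on the maximal domain, both of which you establish.
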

\begin{proof}
  The proof is straightforward:  if $(f_n)$ is a sequence in $\nbdom h$  such that $f_n\to 0$ and $hf_n\to g$, then since point evaluations are bounded, we have that $f_n(x)\to 0$ for all $x\in X$, so $g(x) =\lim h(x)f_n(x)=0$.
\end{proof}
Observe that for a densely defined multiplication operator $f\to hf$ as in the last proposition, it is immediate that its maximal domain $\nbdom h$ is invariant for all bounded multiplication operators on $\mathcal H$. Theorem~\ref{thm:affiliated-implies-multiplication-CNP} below is a converse to this in the case that $\mathcal H$ is a CNP space (together with a stronger statement about the structure of $h$). To state it, we need one definition:
\begin{defn} Let $\mathcal H$ be a Hilbert function space. Say that a densely defined operator $T:\nbdom T \to \mathcal H$ is {\em affiliated to $\mr{Mult} \, \cH$} if:
  \begin{itemize}
  \item  whenever $f\in \nbdom T $ and $\varphi\in \mr{Mult} \, \cH$, then $\varphi f\in \nbdom T $, and $T(\varphi f) = \varphi\cdot (Tf)$.
  \end{itemize}
\end{defn}
\begin{prop}\label{prop:affiliated-implies-closeable} If $\mathcal H$ is a CNP space and $T$ is a densely defined operator in $\mathcal H$, affiliated to $\mr{Mult} \, \cH$, then $T$ is closeable.
  \end{prop}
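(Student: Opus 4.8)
The plan is to show that $T$ is in fact the restriction of a (densely defined) multiplication operator $M_h$, and then to invoke Proposition~\ref{prop:multipliers-are-closeable-CNP}. This recovers part of the structure theorem (Theorem~\ref{thm:affiliated-implies-multiplication-CNP}), which may look like overkill for a mere closeability statement, but some such function-theoretic input seems genuinely necessary: the soft route — applying the McCullough--Trent Beurling theorem to the closed $\multH$-invariant subspace $\overline{\mathrm{graph}\, T}\subseteq\cH\oplus\cH$, writing it as $M_\Phi(\cH\otimes\mc E)$ — does not by itself force the graph-closure to be a graph.

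I would use two ingredients. First, the affiliation hypothesis: $\psi F\in\nbdom T$ and $T(\psi F)=\psi\cdot TF$ whenever $\psi\in\multH$ and $F\in\nbdom T$. Second, the complete Pick structure, entering through the Smirnov property: every $F\in\cH$ factors as $F=\varphi/\psi$ with $\varphi,\psi\in\multH$ and (in the standard form of $N^+(\cH)$) $\psi$ cyclic; a cyclic multiplier is \emph{zero-free} on $X$, since $\psi(x_0)=0$ would put $\psi\multH$ inside the proper subspace $\{F\in\cH:F(x_0)=0\}=\{k_{x_0}\}^{\perp}$ (proper because $k(x_0,x_0)>0$ for a CNP kernel). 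One could instead avoid cyclicity and use that $\cH$ embeds into a Drury--Arveson space, so that the zero set of a nonzero multiplier supports no nonzero element of $\cH$.

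Now fix $f,g\in\nbdom T$ (note $\nbdom T\neq\{0\}$, being dense, since $\cH$ contains the constants) and factor $f=\varphi_1/\psi_1$, $g=\varphi_2/\psi_2$ with $\psi_1,\psi_2$ cyclic. The single function $\varphi_2\psi_1\cdot f=\varphi_1\varphi_2=\varphi_1\psi_2\cdot g$ lies in $\nbdom T$ (it is a multiplier times an element of $\nbdom T$, in two ways), and applying $T$ to it via the affiliation identity from each side gives
\[
  (\varphi_2\psi_1)\cdot Tf \;=\; T(\varphi_2\psi_1 f) \;=\; T(\varphi_1\psi_2 g) \;=\; (\varphi_1\psi_2)\cdot Tg
\]
in $\cH$. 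Evaluating at an arbitrary $x\in X$ and substituting $\varphi_1(x)=\psi_1(x)f(x)$, $\varphi_2(x)=\psi_2(x)g(x)$ yields
\[
  \psi_1(x)\psi_2(x)\big(g(x)(Tf)(x)-f(x)(Tg)(x)\big)=0 ,
\]
so, $\psi_1,\psi_2$ being zero-free, $g\cdot Tf=f\cdot Tg$ on all of $X$, for every $f,g\in\nbdom T$. Since $\nbdom T$ is dense and $k_x\neq0$, for each $x\in X$ there is some $g\in\nbdom T$ with $g(x)\neq0$; set $h(x):=(Tg)(x)/g(x)$, which by the identity just proved is independent of the choice of such $g$, and for which $(Tf)(x)=h(x)f(x)$ for all $f\in\nbdom T$ (clear when $f(x)\neq0$, while if $f(x)=0$ then $g(x)(Tf)(x)=f(x)(Tg)(x)=0$ with $g(x)\neq0$). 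Hence $Tf=hf$ for all $f\in\nbdom T$.

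In particular $\nbdom T\subseteq\nbdom h=\{F\in\cH:hF\in\cH\}$, so $F\mapsto hF$ is densely defined; by Proposition~\ref{prop:multipliers-are-closeable-CNP} it is closed, and since it extends $T$, the operator $T$ is closeable. I expect the only real content to be the recognition that one must pass through a multiplication operator; once the zero-free Smirnov denominators are in hand, the cross-multiplication identity and everything after it is bookkeeping with the affiliation axiom.
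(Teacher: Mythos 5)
Your proof is correct, but it takes a genuinely different route from the paper's. The paper disposes of this proposition in two lines: the Smirnov property shows that the constant function $1$ is a \emph{rationally strictly cyclic} vector for $\mr{Mult}\,\cH$ in the sense of Bercovici et al., and then the closability property follows by citing \cite[Theorem 4.4]{bercetal-2010} as a black box. You instead unpack the mechanism in the concrete RKHS setting: using the Smirnov factorization with a cyclic (hence, as you correctly observe, zero-free) denominator together with the affiliation identity, you derive the cross-multiplication relation $g\cdot Tf=f\cdot Tg$ on $X$, extract a well-defined symbol $h$ with $Tf=hf$, and then conclude via Proposition~\ref{prop:multipliers-are-closeable-CNP}. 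Each step checks out: $\varphi_2\psi_1 f=\varphi_1\varphi_2=\varphi_1\psi_2 g$ is a single element of $\nbdom T$ to which $T$ applies unambiguously, the zero-freeness of cyclic multipliers is exactly the right way to divide out the denominators, and the density of $\nbdom T$ guarantees a nonvanishing $g$ at each point. What your approach buys is self-containedness and a bonus: you obtain the first assertion of Theorem~\ref{thm:affiliated-implies-multiplication-CNP} (existence of the symbol $h$) without the McCullough--Trent Beurling theorem, whereas the paper constructs $h$ only later, locally, from the Beurling pair; what it costs is that you silently use the sharper form of the Smirnov factorization (cyclic denominator, from \cite[Theorem 1.1]{AHMR-2017}) rather than the bare quotient statement given in the introduction, and your argument does not produce the multipliers $a,b$ with $b=ah$, for which the Beurling-theorem machinery is still needed. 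Your opening remark is also on point: the soft route of applying the Beurling theorem to $\overline{G(T)}$ does not by itself show the closure is a graph, so some function-theoretic input of this kind is unavoidable.
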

In the language of \cite{bercetal-2010}, we are saying that the operator algebra $\mr{Mult} \, \cH$ has the {\em closability property} (\cite[Definition 3.2]{bercetal-2010}), whenever $\mathcal H$ is a CNP space. 
\begin{proof}
We recall the following definition \cite[Definition 4.1]{bercetal-2010} (with a slight change of notation): let $\mathcal A\subset \mathcal B(H)$ be a unital operator algebra. We say that a vector $f_0\in \mathcal H$ is a {\em rationally strictly cyclic vector} for $\mathcal A$ if for every vector $f\in \mathcal H$, there exist $a,b\in\mathcal A$ so that $af=bf_0$. Now if $\mathcal H$ is a CNP space, then, as we noted in the introduction, every $f\in \mathcal H$ can be expressed in the form $f=b/a$ where $a,b\in \mr{Mult}(\mathcal H)$, which says that the constant function $1\in\mathcal H$ is a rationally strictly cyclic vector for the unital operator algebra $\mr{Mult} \, \cH$. It is then immediate from \cite[Theorem 4.4]{bercetal-2010} that $\mr{Mult} \, \cH$ has the closability property. 
  \end{proof}

We are now ready for the main theorem of this section:

\begin{thm}\label{thm:affiliated-implies-multiplication-CNP}
  Let $\mathcal H$ be a CNP space on a set $X$, with kernel $k$. If $T$ is a densely defined operator in $\mathcal H$, affiliated to $\mr{Mult} \, \cH$, then there exists a function $h:X\to \mathbb C$ such that for every $f\in \nbdom T $, 
\begin{equation}
(Tf)(x)=h(x)f(x). 
\end{equation}
Moreover there exist nonzero multipliers $a,b\in \mr{Mult} \, \cH$ such that $b(x)=a(x)h(x)$ for all $x\in X$.
\end{thm}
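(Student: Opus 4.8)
The plan is to construct $h$ pointwise and verify it directly, using only two ingredients: the affiliation hypothesis, and the Smirnov property of CNP spaces in its strong form, namely that every $f\in\mathcal H$ can be written $f=\beta/\alpha$ with $\alpha,\beta\in\mr{Mult}\,\cH$ and $\alpha$ \emph{cyclic} (see \cite[Theorem 1.1]{AHMR-2017} or \cite[Corollary 3.4]{jury-martin-2021}). The point of cyclicity is that a cyclic multiplier $\alpha$ cannot vanish at any point of $X$: if $\alpha(x_0)=0$, then, since point evaluations are bounded, the closure of $\alpha\cdot\mr{Mult}\,\cH$ lies in the proper subspace $\{g\in\mathcal H:g(x_0)=0\}$, contradicting both density and $1\in\mathcal H$. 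So every $f\in\mathcal H$ is a quotient of multipliers with zero-free denominator. To begin constructing $h$: for each $x\in X$, since $\nbdom T$ is dense and $k(x,x)=(1-\|u(x)\|^2)^{-1}>0$ forces evaluation at $x$ to be a nonzero functional, there is $f\in\nbdom T$ with $f(x)\neq 0$; set $h(x):=(Tf)(x)/f(x)$. Everything then comes down to showing this is independent of $f$ and that $(Tf)(x)=h(x)f(x)$ with no restriction on $f$.

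The elementary fact driving the argument is that if $u,v\in\nbdom T$ are themselves multipliers, then $u\cdot Tv=v\cdot Tu$ as functions on $X$: by affiliation $uv=vu\in\nbdom T$, and evaluating $T(uv)$ in the two permitted ways (using $u$, resp.\ $v$, as the multiplier) gives $u\cdot(Tv)=T(uv)=v\cdot(Tu)$. Now given arbitrary $f,g\in\nbdom T$, factor $f=\beta_1/\alpha_1$ and $g=\beta_2/\alpha_2$ with the $\alpha_i$ cyclic. By affiliation $\alpha_1 f=\beta_1\in\nbdom T$ with $T\beta_1=\alpha_1\,Tf$, and similarly for $g$; since $\beta_1,\beta_2$ are multipliers lying in $\nbdom T$, the elementary fact yields $\beta_1\cdot T\beta_2=\beta_2\cdot T\beta_1$ pointwise, which after substitution reads $\alpha_1(x)\alpha_2(x)\big(f(x)(Tg)(x)-g(x)(Tf)(x)\big)=0$ for every $x\in X$. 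As the $\alpha_i$ are zero-free, $f(x)(Tg)(x)=g(x)(Tf)(x)$ for all $x\in X$ and all $f,g\in\nbdom T$. This makes $h$ well defined, and, fixing for each $x$ some $g\in\nbdom T$ with $g(x)\neq 0$, the same identity gives $(Tf)(x)=\big((Tg)(x)/g(x)\big)f(x)=h(x)f(x)$ for every $f\in\nbdom T$, automatically covering the points where $f(x)=0$. Hence $Tf=hf$ on $\nbdom T$, which is the first assertion.

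For the ``moreover'' one assumes $T\neq 0$ (when $h\equiv 0$ the claim is literally false, so this is a harmless standing hypothesis) and bootstraps by applying the factorization twice. Pick $f\in\nbdom T$ with $Tf\neq 0$ and factor $f=\beta/\alpha$ with $\alpha$ cyclic; then $a_1:=\alpha f=\beta$ is a nonzero multiplier in $\nbdom T$ (nonzero since $\alpha$ is zero-free and $f\neq 0$), and $c_1:=Ta_1=\alpha\,Tf$ is a nonzero element of $\mathcal H$. Factor $c_1=\delta/\gamma$ with $\gamma$ cyclic; then $a:=\gamma a_1$ is again a nonzero multiplier in $\nbdom T$, and by affiliation $Ta=\gamma\,Ta_1=\gamma c_1=\delta=:b$, a nonzero multiplier (again since $\gamma$ is zero-free and $c_1\neq 0$). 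Finally $b=Ta=ha$ by the first part, i.e.\ $b(x)=a(x)h(x)$ for all $x\in X$, with $a,b\in\mr{Mult}\,\cH$ both nonzero.

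The load-bearing step is the use of a zero-free (cyclic) denominator in the Smirnov factorization: that is exactly what upgrades the pointwise identities produced by affiliation into identities valid at \emph{every} point, with no exceptional set, and it is the only place the CNP hypothesis is used essentially. Everything else --- defining $h$, the points where $f$ vanishes, the two-step bootstrap --- is routine once that input is in hand; the one thing to remember is to record the hypothesis $T\neq 0$ needed for the second conclusion.
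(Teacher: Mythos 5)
Your proof is correct, and it takes a genuinely different route from the one in the paper. The paper first reduces to the case that $T$ is closed (via the closability property), then invokes the McCullough--Trent Beurling theorem to write the closed graph $G(T)$ as the range of a partially isometric column multiplier $\begin{pmatrix} A\\ B\end{pmatrix}$, defines $h$ locally as $b_k/a_k$ using the coordinates of this Beurling pair together with the Agler--McCarthy embedding (for continuity and nonvanishing on a neighborhood), and checks consistency by a $2\times 2$ determinant argument; the ``moreover'' then falls out because each coordinate pair $(a_k,b_k)$ already satisfies $b_k=a_kh$. You instead use only the affiliation identity $u\cdot Tv=v\cdot Tu$ for multipliers $u,v\in\nbdom T$ together with the strong Smirnov factorization $f=\beta/\alpha$ with $\alpha$ cyclic, and your key observation --- that a cyclic multiplier is zero-free, so the resulting pointwise identities hold at \emph{every} point --- is exactly right and replaces both the Beurling theorem and the local/continuity considerations. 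Your argument is more elementary in that it never needs $T$ to be closed or closeable and never touches the Beurling representation; what it gives up is precisely that representation, i.e.\ the Beurling pair $A,B$ and the description $\nbdom T=\nbran M_A$, which the paper's proof produces as a byproduct and then uses heavily in Sections 3--6. (Both routes ultimately rest on a nontrivial CNP input: McCullough--Trent for the paper, the cyclic-denominator Smirnov theorem of \cite{AHMR-2017} for you.) Your remark that the ``moreover'' clause requires $T\neq 0$ is a legitimate, if trivial, correction to the statement: if $T=0$ then $h\equiv 0$ forces $b\equiv 0$, so no nonzero $b$ exists; the paper's proof silently has the same issue since there $b_k=Ta_k$. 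Your two-step bootstrap (factor $f$, then factor $Ta_1$) correctly produces nonzero multipliers $a,b$ with $b=ah$.
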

\begin{proof}
  By Proposition~\ref{prop:affiliated-implies-closeable}, the operator $T$ is closeable, so it suffices to assume that $T$ is closed.  Next, we note that by the theorem of Agler and McCarthy \cite{agler-mccarthy-2001} we may assume that $X$ is a subset of the open unit ball of a Hilbert space $\ell^2(I)$, and the kernel $k$ is the restriction of the Drury-Arveson kernel
  \[
    k(z,w) =\frac{1}{1-\langle z,w\rangle}
  \]
  to this set. In particular $X$ becomes a metric space as a subset of $\ell^2(I)$, and every function $f\in \mathcal H$ (and every multiplier $\varphi\in \mr{Mult} \, \cH$) is continuous on $X$. 

  Now we begin the proof. Since $T$ is closed, its graph
\begin{equation}
G(T):= \left\{ \begin{pmatrix} f \\ Tf\end{pmatrix} : f\in \nbdom T \right\}
\end{equation}
is a closed subspace of $\mathcal H\oplus \mathcal H$. To say that $T$ is affiliated to $\mr{Mult} \, \cH$ means that this subspace is invariant under the action of $M_\varphi\oplus M_\varphi$ for every bounded multiplier $\varphi\in \mr{Mult} \, \cH$. It follows from the McCullough-Trent Beurling theorem for CNP spaces \cite{mccullough-trent-2000} that there exists an auxiliary Hilbert space $\mathcal E$ and a partially isometric multiplier $\Theta=\begin{pmatrix} A \\ B\end{pmatrix}$ from $\mathcal H\otimes \mathcal E$ to $\mathcal H\otimes \mathbb C^2$ such that $G(T)$ is the range of $M_\Theta$. Thus, we can re-express the graph of $T$ as
\begin{equation}\label{eqn:beurling-graph-CNP}
G(T):= \left\{ \begin{pmatrix} AF\\ BF\end{pmatrix} : F\in \mathcal H\otimes \mathcal E\right\}
\end{equation}
This says first of all that $\nbdom T $ is equal to the range of the row multiplier $M_A:\mathcal H\otimes \mathcal E\to \mathcal H$. 

Now, fix a basis $\{e_n\}$ of $\mathcal E$, with respect to this basis the multipliers $A$ and $B$ can be written as rows $A=(a_1 \ a_2\ \dots), B=(b_1, b_2, \dots)$, with each $a_k$ and $b_k$ a bounded (contractive) multiplier, with $a_k$ belonging to $\nbdom T $, and $b_k=Ta_k$ for each $k$. 

We now construct the function $h$. Since $\nbdom T $ is assumed dense in $\mathcal H$, the range of $A$ is dense, and so in particular for each $x\in X$ there exists an index $k$ for which $a_k(x)\neq 0$. Fix a point $x_0$, and such a $k$. Then there exists a neighborhood of $x_0$ in $X$ in which $a_k$ is nonvanishing, and we define $h(x) = b_k(x)/a_k(x)$ in this neighborhood. To see that $h$ is well-defined, suppose $j$ is another index with $a_j(x_0)\neq 0$. Consider the $2\times 2$ matrix multiplier
\begin{equation}
C = \begin{pmatrix} a_j & a_k\\ b_j & b_k \end{pmatrix}.
\end{equation}
By construction, the range of the multiplier $M_C$ is contained in the range of the multiplier $M_\Theta$, and hence in the graph of $T$.  If we apply $C$ to the column vector $\begin{pmatrix} -a_k \\ a_j\end{pmatrix}\in \mathcal H\oplus \mathcal H$, we get
\begin{equation}
\begin{pmatrix} a_j & a_k\\ b_j & b_k \end{pmatrix}\begin{pmatrix} -a_k \\ a_j\end{pmatrix} = \begin{pmatrix} 0 \\ a_jb_k - a_kb_j\end{pmatrix}.
\end{equation}
Since $T$ is a linear operator, we have $T(0)=0$ and hence $a_j(x)b_k(x) -a_k(x)b_j(x)=\det C(x)\equiv 0$.  It follows that the rows of $C(x)$ are linearly dependent for each $x$. By the choice of $a_j$ and $a_k$, at our fixed point $x_0$ we have $b_k(x_0) = a_k(x_0)h(x_0)$, so by the linear dependence of the rows we have also $b_j(x_0) = a_j(x_0)h(x_0)$, which shows that $h$ is well-defined.

Next we observe that for each $k$, if $a_k(x)=0$ for some $x$, then also $b_k(x)=0$. Indeed, fix a $x_0$ with $a_k(x_0)=0$ and choose an index $j$ so that $a_j(x_0)\neq 0$. We form the $2\times 2$ matrix $C$ as before, whose determinant is identically $0$. At our chosen $x_0$, we have $\det C(x_0)=a_j(x_0)b_k(x_0)=0$, which forces $b_k(x_0)=0$.

For the function $h(x)$ just constructed, we now have $b_k(x)=a_k(x)h(x)$ for every $k$ and all $x\in X$. It follows that for every $F\in \mathcal H\otimes \mathcal E$ we have $B(x)F(x)=h(x)A(x)F(x)$. But from (\ref{eqn:beurling-graph-CNP}), we see that $T(AF)=BF$, and hence for every $f\in \nbdom T $, we have $(Tf)(x)=h(x)f(x)$, which completes the proof. 
\end{proof}

\subsection{Examples}\label{subsec:examples}  A general way to construct densely defined multipliers is as follows: if $a\in Mult (\mathcal H)$ is any bounded multiplier that is also a cyclic vector for $\mathcal H$, that is, the set $a\mathcal H=\{af: f\in \mathcal H\}$ is dense in $\mathcal H$, then for any multiplier $b\in Mult (\mathcal H)$ the quotient
\[
  h:=\frac{b}{a}
\]
will be  a densely defined multiplier; indeed it is obvious that $\nbdom h$ contains $a\mathcal H$. (The set of all $h:X\to \mathbb C$ representable in this way is called the {\em Smirnov class} associated to $\mathcal H$, denoted $N^+(\mathcal H)$.) By a theorem of Sarason \cite{sarason-2008}, in the case of $\mathcal H=H^2(\mathbb D)$ it is known that, conversely, every densely defined multiplier belongs to the Smirnov class. We do not know if this remains true for general CNP spaces, though we can prove it in some special cases (see the end of this section, and Section~\ref{sec:questions} for further discussion).

A characterizing feature of CNP spaces is the following: let $\mathcal H$ be a CNP space over a set $X$, with kernel $k$. For any nonempty subset $Y\subset X$, we can restrict $k$ to $Y\times Y$ to obtain a CNP space $\mathcal H_Y$ on $Y$. If now $\varphi\in Mult (\mathcal H_Y)$, then $\varphi$ can be extended to a function $\psi$ on all of $X$, and in such a way that $\|\psi\|_{Mult \mathcal H} = \|\varphi\|_{Mult \mathcal H_Y}$. One can ask an analogous question for densely defined multipliers: if $h:Y\to \mathbb C$ is a densely defined multiplier of $\mathcal H_Y$, does $h$ extend to a function $g:X\to \mathbb C$ in such a way that $g$ is a densely defined multiplier of $\mathcal H$? The following example shows that the answer in general is no.

Let $X=\mathbb D$ be the unit disk in the complex plane; the Szeg\H{o} kernel $k(z,w)=\frac{1}{1-z\overline{w}}$ is a CNP kernel on $\mathbb D$, the associated Hilbert function space is of course the Hardy space $H^2$. Let $Y=\{y_n\}_{n=0}^\infty\subset \mathbb D$ be an interpolating sequence for $H^2$. This implies that every bounded function $\varphi :Y\to \mathbb C$ is a bounded multiplier of $\mathcal H_Y$, and thus extends to a bounded multiplier $g\in Mult (H^2)=H^\infty$.

\begin{prop} Let $Y\subset \mathbb D$ be an interpolating sequence. There exists a function $h:Y\to \mathbb C$ which is a densely defined multiplier of $\mathcal H_Y$, but which cannot be extended to a densely defined multiplier of $H^2$.
  \end{prop}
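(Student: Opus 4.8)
The plan rests on a size contrast: the restriction space $\mathcal H_Y$ of the sparse interpolating sequence $Y$ is small enough that \emph{every} function on $Y$ is a densely defined multiplier of it, whereas $H^2$ imposes a genuine growth restriction on the values of a densely defined multiplier along a Blaschke sequence.

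First I would check that every $h:Y\to\mathbb C$ is a densely defined multiplier of $\mathcal H_Y$. Because $Y$ is interpolating, the restriction map $R:H^2\to\mathcal H_Y$ is bounded (Carleson embedding) and onto (the interpolation property), so by the Shapiro--Shields theorem the norm of $\mathcal H_Y$ is equivalent to the weighted $\ell^2$ norm $g\mapsto\big(\sum_n|g(y_n)|^2(1-|y_n|^2)\big)^{1/2}$. It follows that the finitely supported functions on $Y$ all lie in $\mathcal H_Y$ and are dense there, and since multiplication by any $h$ preserves finite support, the maximal domain $\{g\in\mathcal H_Y:hg\in\mathcal H_Y\}$ contains this dense set; hence $h$ is a densely defined multiplier of $\mathcal H_Y$.

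Next I would record the obstruction on the $H^2$ side. If $g$ is a densely defined multiplier of $H^2$, then choosing $0\neq f\in\nbdom g$ we have $g=(gf)/f$, meromorphic on $\mathbb D$ with poles only among the zeros of $f$; since $\nbdom g$ is dense it contains, for each point of $\mathbb D$, a function nonvanishing there, so $g$ is in fact analytic on $\mathbb D$ and is a quotient of two $H^2$ functions, hence lies in the Nevanlinna class $N$ --- or, invoking the Su\'arez--Sarason theorem \cite{sarason-2008}, even in $N^+$. Consequently $\log^+|g|$ is dominated on $\mathbb D$ by the Poisson integral $P[\mu]$ of some finite positive Borel measure $\mu$ on $\mathbb T$ (a positive $L^1$ density if one uses $N^+$), and combining this with the elementary estimate $P_{y_n}(\theta)\le\frac{1+|y_n|}{1-|y_n|}\le\frac{2}{1-|y_n|}$ yields
\[
(1-|y_n|)\,\log^+|g(y_n)|\ \le\ (1-|y_n|)\,P[\mu](y_n)\ \le\ 2\,\mu(\mathbb T)\qquad\text{for all }n.
\]

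Finally I would produce $h$ that violates this. An interpolating sequence is a Blaschke sequence, so $\sum_n(1-|y_n|)<\infty$ and therefore $(1-|y_n|)^{-1}\to\infty$; set $h(y_n):=\exp\big((1-|y_n|)^{-2}\big)$ (any $h$ with $(1-|y_n|)\log^+|h(y_n)|\to\infty$ would do). By the first step $h$ is a densely defined multiplier of $\mathcal H_Y$, but if some $g:\mathbb D\to\mathbb C$ with $g|_Y=h$ were a densely defined multiplier of $H^2$, then the displayed inequality would force $(1-|y_n|)^{-1}=(1-|y_n|)\log^+|h(y_n)|\le 2\mu(\mathbb T)$ for all $n$, which is absurd. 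The only step calling for a little care is the density of the finitely supported functions in $\mathcal H_Y$, and that reduces, as indicated, to the norm equivalence for interpolating sequences; nothing else is more than routine.
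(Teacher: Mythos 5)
Your proof is correct and follows essentially the same route as the paper: finitely supported functions are dense in $\mathcal H_Y$ so every $h$ is a densely defined multiplier there, while on the $H^2$ side a densely defined multiplier obeys the growth bound $(1-|z|)\log^+|g(z)|\le C$, which your choice $h(y_n)=\exp((1-|y_n|)^{-2})$ violates --- the paper's argument verbatim, except that it cites Privalov's estimate for the Smirnov class where you derive the equivalent bound from the Poisson majorant. Your observation that membership in the Nevanlinna class $N$ (hence avoiding the Su\'arez--Sarason theorem) already suffices is a small but genuine economy over the paper's appeal to $N^+$.
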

  \begin{proof}
    The proof consists in combining three observations: first, {\em every} function $h:Y\to \mathbb C$ is a densely defined multiplier of $\mathcal H_Y$. Indeed, since $Y$ is an interpolating sequence, the set $\mathcal F$ of finitely supported functions $f:Y\to \mathbb C$ is dense in $\mathcal H_Y$, and evidently $h\mathcal F\subset \mathcal F\subset \mathcal H_Y$, so that every $h$ has dense domain. Second, we use the result of Sarason mentioned above, that the densely defined multipliers of $H^2$ coincide with the Smirnov class $N^+$. Finally, if $g$ is any Smirnov class function, there exists a constant $C$ (depending on $g$) so that
    \begin{equation}\label{eqn:smirnov-growth-condition}
      |g(z)|\leq e^{\frac{C}{1-|z|}}
    \end{equation}
    for all $|z|<1$ \cite[II.3.1]{privalov-1950}. Thus, by selecting $h$ so that $|h(y_n)|$ goes to infinity so rapidly that (\ref{eqn:smirnov-growth-condition}) cannot hold for any $C$, (say $h(y_n) = \exp ((1-|y_n|)^{-2})$), we see that $h$ cannot extend to a Smirnov function in $\mathbb D$.  
  \end{proof}
  On the other hand, still supposing $Y$ is an interpolating sequence, every unbounded multiplier $h$ of $\mathcal H_Y$ (thus, every function $h:Y\to \mathbb C$) belongs to the ``local'' Smirnov class $N^+(\mathcal H_Y)$:
  \begin{prop} Let $Y\subset \mathbb D$ be an interpolating sequence. Then every function $h:Y\to \mathbb C$ belongs to $N^+(\mathcal H_Y)$.
  \end{prop}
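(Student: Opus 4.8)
The plan is to produce, for an arbitrary function $h:Y\to\mathbb{C}$, an explicit Smirnov factorization $h=b/a$ with $b\in\mathrm{Mult}(\mathcal{H}_Y)$ and $a\in\mathrm{Mult}(\mathcal{H}_Y)$ a \emph{cyclic} bounded multiplier; this is exactly what membership in $N^+(\mathcal{H}_Y)$ requires. Two facts about an interpolating sequence $Y$ will be used, both already recorded in the preceding discussion: every bounded function on $Y$ is a bounded multiplier of $\mathcal{H}_Y$ (so $\mathrm{Mult}(\mathcal{H}_Y)=\ell^\infty(Y)$, with equivalent norms), and the space $\mathcal{F}$ of finitely supported functions $f:Y\to\mathbb{C}$ is a dense subspace of $\mathcal{H}_Y$.

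The one small point to check is which bounded multipliers are cyclic, and I would show that \emph{every nowhere-vanishing} $a\in\ell^\infty(Y)$ is cyclic for $\mathcal{H}_Y$. Indeed, for each $y\in Y$ the function $a(y)^{-1}\delta_y$ lies in $\mathcal{F}\subset\mathcal{H}_Y$, and $a\cdot\bigl(a(y)^{-1}\delta_y\bigr)=\delta_y$; hence $a\mathcal{H}_Y$ contains $\mathcal{F}$ and is therefore dense, so $a$ is a cyclic vector.

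With this in hand the result is immediate. Given $h:Y\to\mathbb{C}$, set
\[
a(y):=\frac{1}{1+|h(y)|},\qquad b(y):=a(y)h(y)=\frac{h(y)}{1+|h(y)|}.
\]
Then $a$ takes values in $(0,1]$, so it is a nowhere-vanishing element of $\ell^\infty(Y)=\mathrm{Mult}(\mathcal{H}_Y)$ and hence, by the previous paragraph, a cyclic multiplier; likewise $|b(y)|<1$ for every $y$, so $b\in\mathrm{Mult}(\mathcal{H}_Y)$; and $b/a=h$ pointwise on $Y$. Therefore $h\in N^+(\mathcal{H}_Y)$. Combined with the previous proposition, this also records that for an interpolating sequence the densely defined multipliers of $\mathcal{H}_Y$ are exactly the functions in $N^+(\mathcal{H}_Y)$, namely all of $\mathbb{C}^Y$.

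There is no serious obstacle; the construction is essentially forced. The only thing to be careful about is the cyclicity clause in the definition of $N^+$: one must take the denominator $a$ nonvanishing on $Y$ (any strictly positive bounded choice does), which is precisely what makes $a$ a cyclic multiplier and legitimizes the factorization $h=b/a$.
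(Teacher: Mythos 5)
Your proof is correct and follows essentially the same route as the paper: write $h=b/a$ with $a$ a bounded, nowhere-vanishing (hence cyclic, via the density of the finitely supported functions) multiplier and $b=ah$ bounded; the paper phrases this as expressing the sequence $h(y_n)$ as a ratio of two bounded sequences with nonvanishing denominator, and your choice $a=(1+|h|)^{-1}$ is just one explicit such ratio. No gaps.
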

  \begin{proof}
    Again, the proof is quite simple. Since $Y$ is an interpolating sequence, every bounded function $\varphi:Y\to \mathbb C$ is a bounded multiplier of $\mathcal H_Y$, and it is also easy to see that every nonvanishing function $f\in \mathcal H_Y$ is a cyclic vector, since if $f$ is nonvanishing the set of products $\{\varphi f:\varphi\in \mr{Mult} \, \cH\}$ will contain the set $\mathcal F$ of all finitely supported functions on $Y$, which is dense in $\mathcal H$. It remains only to note that any numerical sequence $(x_n)$ can be expressed as a ratio of two bounded sequences
    \[
      x_n=\frac{b_n}{a_n},
    \]
    with $0<a_n\leq 1$ for all $n$. Doing this for $x_n=h(y_n)$, the functions $a(y_n):=a_n$ and $b(y_n):=b_n$ are bounded multipliers of $\mathcal H_Y$, with $a$ cyclic since it is nonvanishing. 
    \end{proof}

\section{Reproducing kernels for $\nbdom T$ and $\nbdom T ^*$}\label{sec:kernels}
In this section we fix a CNP space $\mathcal H$ on a set $X$, and closed, densely defined operator $T$ in $\mathcal H$, affiliated to $\mr{Mult} \, \cH$.  By Theorem~\ref{thm:affiliated-implies-multiplication-CNP}, there is a function $h$ so that $Tf(x)=h(x)f(x)$ for all $f\in \nbdom T $. We call the function $h$ the {\em symbol} of $T$.   We will henceforth refer to $h$ simply as a {\em densely defined multiplier} (with the understanding that it is always a {\em closed} operator). 
Also, since $T$ is densely defined and closed, it has a densely defined adjoint $T^*$.  We recall that for any closed densely defined operator $T$, the graph of $T^*$ is related to the graph of $T$ via 
\begin{equation}
G(T^*) = JG(T)^\bot
\end{equation}
where $J$ is the $2\times 2$ block unitary
\begin{equation}
J=\begin{pmatrix} 0 & -1 \\ 1 & 0\end{pmatrix}.
\end{equation}

To say that $T$ is affiliated to $\mr{Mult} \, \cH$ means, as we have observed and used in the last section, that the graph $G(T)\subset \mathcal H\oplus \mathcal H$ is invariant for $M_\varphi\oplus M_\varphi$, for every $\varphi\in \mr{Mult} \, \cH$. It follows that the graph of $T^*$ is invariant for all the $M_\varphi^*\oplus M_\varphi^*$. This is then equivalent to saying that $\nbdom T ^*$ is invariant for all $M_\varphi^*$, and for all $f\in \nbdom T ^*$ and all $\varphi\in Mult (\mathcal H)$ we have
\[
  T^*M_\varphi^*f =M_\varphi^*T^*f.
\]

In this section we equip $\nbdom T $ and $\nbdom T ^*$ with norms making them into reproducing kernel Hilbert spaces, contractively contained in $\mathcal H$, and derive some basic properties of these spaces. 

As usual for closed operators, we can equip the domain $\nbdom T$ with the {\em graph norm}
\begin{equation}\label{eqn:graph-norm-def-CNP}
  \|f\|_{\nbdom T}^2 = \|f\|^2_{\mathcal H} +\|Tf\|^2_{\mathcal H}.
\end{equation}
This norm is just what makes the natural bijection $f\to (f, Tf)$ from $\nbdom T$ to $G(T)\subset \mathcal H \oplus \mathcal H$ a unitary operator. In particular since $T$ is assumed closed, $G(T)$ is closed in $\mathcal H\oplus \mathcal H$ and therefore $\nbdom T$ is complete with this norm. Of course, the corresponding statements also hold for $T^*$ and $\nbdom T ^*$.

 From the definition of the graph norm (\ref{eqn:graph-norm-def-CNP}), the inclusion maps $\nbdom T \subset \mathcal H$ and $\nbdom T ^*\subset \mathcal H$ are
contractive linear operators. This implies that $\nbdom T$ and $\nbdom T ^*$ are reproducing kernel Hilbert spaces over $X$; we write
$k^T(x,y)$ and $k^{T^*}(x,y)$ for their kernels, respectively. We can use the description of the graph of $T$ obtained in the last section (equation \ref{eqn:beurling-graph-CNP}) to compute $k^T$ and $k^{T^*}$. In particular we fix the auxiliary Hilbert space $\mathcal E$ and the contractive multiplier
\begin{equation}
\Theta=\begin{pmatrix} A \\ B\end{pmatrix} :\mathcal H\otimes \mathcal E\to \mathcal H\otimes \mathbb C^2
\end{equation}
so that $G(T)=\nbran (M_\Theta)$. We call $A,B$ the {\em Beurling pair} associated to $T$ (or, equivalently, to its symbol $h$). By the McCullough-Trent Beurling theorem for CNP spaces \cite{mccullough-trent-2000}, the row multipliers $A$ and $B$ are unique up to the choice of basis in $\mathcal E$. Precisely, if $A_1, B_1$ is another Beurling pair, there is a unitary transformation $U:\mathcal E\to \mathcal E$ such that $A_1(x) = A(x)U$ and $B_1(x)=B(x)U$ for all $x\in X$. For given $h$, we will fix one choice of Beurling pair (corresponding to a choice of orthonormal basis for $\mathcal E$) and refer to it as {\em the} Beurling pair.  In particular, if $A,B$ and $A_1, B_1$ are two different Beurling pairs, then for all $x,y\in X$ we have
\[
  A(x)A(y)^* =A_1(x)A_1(y)^*
\]
and similarly for $B, B_1$, so that the formulas below do not actually depend on the choice of Beurling pair.

\begin{thm}
Let $h$ be a densely defined multiplier, $T$ the operator $Tf=hf$, and $A,B$ the Beurling pair for $h$. Then the reproducing kernels for $\nbdom T$ and $\nbdom T_*$ are given by
\begin{equation}
k^T(x,y) = A(x)A(y)^*k(x,y)
\end{equation}
and
\begin{equation}
k^{T^*}(x,y) = (1-B(x)B(y)^*)k(x,y)
\end{equation}
respectively. 
\end{thm}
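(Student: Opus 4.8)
The plan is to compute both kernels directly from the description of the graph $G(T) = \nbran(M_\Theta)$, using the fact that reproducing kernels of ranges of contractive multipliers (equivalently, of pullbacks under partial isometries) are obtained by ``pushing forward'' the kernel. Concretely, the map $V: \cH\otimes\cE \to \cH\oplus\cH$ given by $VF = (AF, BF)$ is a contractive multiplier whose range is $G(T)$; the induced map onto $G(T)$ with its quotient (graph) norm is a coisometry. First I would record the standard fact that if $W$ is a coisometry from a RKHS $\mathcal{K}_1$ with kernel $\kappa_1$ onto a RKHS $\mathcal{K}_2$, and $W$ acts as $W g = g \circ (\text{evaluation})$ compatibly with point evaluations, then the kernel of $\mathcal{K}_2$ at $(x,y)$ is $W_x \kappa_1(\cdot, y)$ appropriately; in the multiplier setting this yields that the kernel of $\nbran(M_\Theta)\subset\cH\otimes\C^2$ is $\Theta(x)\Theta(y)^* k(x,y)$, a $2\times 2$ matrix-valued kernel.

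Next I would identify $\nbdom T$ with $G(T)$ via the unitary $f\mapsto (f,Tf) = (f,hf)$, which is unitary by the very definition of the graph norm \eqref{eqn:graph-norm-def-CNP}. Under this identification, evaluation of $f\in\nbdom T$ at $x$ corresponds to reading off the \emph{first} coordinate of the pair at $x$, i.e.\ composing with projection $P_1:\C^2\to\C$ onto the first factor. Therefore $k^T(x,y) = P_1\big(\Theta(x)\Theta(y)^* k(x,y)\big)P_1^* = A(x)A(y)^* k(x,y)$, since the $(1,1)$ entry of $\Theta(x)\Theta(y)^*$ is $A(x)A(y)^*$. That gives the first formula.

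For $k^{T^*}$ I would use $G(T^*) = J\,G(T)^\perp$ with $J = \begin{pmatrix} 0 & -1 \\ 1 & 0\end{pmatrix}$. Since $\Theta$ is a partial isometry (as a multiplier), $M_\Theta M_\Theta^*$ is the orthogonal projection of $\cH\otimes\C^2$ onto $G(T)$, so the reproducing kernel of $G(T)^\perp$ is $\big(I_{\C^2} - \Theta(x)\Theta(y)^*\big)k(x,y)$, the $2\times 2$ matrix kernel of the orthocomplement. Applying the unitary $J$ conjugates this kernel by $J$, which leaves a kernel of the same $(2,2)$-block structure; identifying $\nbdom T^*$ with $G(T^*)$ via $g\mapsto (g, T^*g)$ (again unitary by the graph norm) and reading off the first coordinate means extracting the appropriate scalar entry of $J\big(I - \Theta(x)\Theta(y)^*\big)J^* \, k(x,y)$. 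A short computation with $J = \begin{pmatrix} 0 & -1 \\ 1 & 0\end{pmatrix}$ shows the $(1,1)$ entry of $J(I-\Theta\Theta^*)J^*$ equals the $(2,2)$ entry of $I - \Theta\Theta^*$, namely $1 - B(x)B(y)^*$; hence $k^{T^*}(x,y) = (1 - B(x)B(y)^*)k(x,y)$.

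The main technical point to get right — and the place where one must be careful rather than merely ``routine'' — is the claim that the reproducing kernel of the range of a contractive (here, partially isometric) multiplier $M_\Theta$ is $\Theta(x)\Theta(y)^* k(x,y)$ \emph{with the quotient norm}, together with the parallel claim that the kernel of the orthocomplement of the range of a \emph{partial isometry} $M_\Theta M_\Theta^*$ is $(I - \Theta(x)\Theta(y)^*)k(x,y)$. The first is a form of the standard pullback/pushforward formula for RKHS kernels and uses that $M_\Theta M_\Theta^*$ is a projection precisely because $\Theta$ is a partial isometry (which is exactly what the McCullough--Trent theorem supplies); the second follows by subtracting from the full matrix kernel $I\cdot k(x,y)$ of $\cH\otimes\C^2$. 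Once these two facts are in place, everything else is bookkeeping with the unitaries $f\mapsto(f,hf)$, $g\mapsto(g,T^*g)$, the projection $P_1$, and the matrix $J$.
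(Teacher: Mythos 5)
Your proposal is correct and follows essentially the same route as the paper: both arguments use that $M_\Theta M_\Theta^*$ is the orthogonal projection onto $G(T)$ (from the McCullough--Trent partial isometry), identify $\nbdom T$ and $\nbdom T^*$ with the graphs via the graph-norm unitaries, read off the first coordinate to get the kernel, and handle $T^*$ by conjugating $I-M_\Theta M_\Theta^*$ with $J$. The only cosmetic difference is that you package the first step as the general ``kernel of the range of a partially isometric multiplier'' fact, while the paper computes $P_{G(T)}\bigl(\begin{smallmatrix} k_y \\ 0\end{smallmatrix}\bigr)$ directly; the content is identical.
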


\begin{proof}
We work with $k^T$ first. Since $M_\Theta$ is a partial isometry with range $G(T)$, the operator $P_{G(T)}:=M_\Theta M_\Theta^*$ is the orthogonal projection from $\mathcal H\oplus \mathcal H$ onto the graph $G(T)$.  On the one hand, by the reproducing property of $k^T$ we have for each $y\in X$ and each $f\in \nbdom T$
\begin{equation}
f(y) = \langle f, k^T_y\rangle_{\nbdom T}.
\end{equation}
On the other hand, since the map $f\to \begin{pmatrix} f \\ Tf \end{pmatrix}$ is a unitary from $\nbdom T$ onto $G(T)$, we have for any $f, g\in \nbdom T$
\begin{equation}
\langle f,g\rangle_{\nbdom T} = \left\langle \begin{pmatrix} f \\ Tf \end{pmatrix},  \begin{pmatrix} g \\ Tg\end{pmatrix} \right\rangle_{\mathcal H\oplus \mathcal H}
\end{equation}
Next, for any $f\in\nbdom T$ and $y\in X$, we have
\begin{align}
f(y) &= \left\langle \begin{pmatrix} f \\ Tf \end{pmatrix},  \begin{pmatrix} k_y \\  0 \end{pmatrix} \right\rangle_{\mathcal H\oplus\mathcal H} \\
 &= \left\langle \begin{pmatrix} f \\ Tf \end{pmatrix}, P_{G(T)} \begin{pmatrix} k_y \\  0 \end{pmatrix} \right\rangle_{\mathcal H\oplus \mathcal H} 
\end{align}
It follows that $k^T_y$ is obtained by projecting $\begin{pmatrix} k_y \\ 0\end{pmatrix}$ into $G(T)$ and reading off the first entry. We get
\begin{equation}
 P_{G(T)} \begin{pmatrix} k_y \\  0 \end{pmatrix} = M_\Theta M_\Theta^*\begin{pmatrix} k_y \\  0 \end{pmatrix}  =\begin{pmatrix} M_AM_A^* & M_AM_B^* \\ M_BM_A^* & M_BM_B^*\end{pmatrix} \begin{pmatrix} k_y \\  0 \end{pmatrix} = \begin{pmatrix} AA(y)^* k_y \\ BA(y)^*k_y\end{pmatrix}
\end{equation}
so that
\begin{equation}
k^T(x,y) = A(x)A(y)^*k(x,y)
\end{equation}
as desired. 

The kernel $k^{T^*}$ is obtained similarly, using the relationship $G(T^*)= JG(T)^\bot$ mentioned above.  Thus we may calculate the projection onto the graph of $T^*$ as
\begin{align}
  P_{G(T^*)} &= J^*(I-M_\Theta M_\Theta^*)J \\
               &= \begin{pmatrix} 0 & 1 \\ -1 & 0 \end{pmatrix} \begin{pmatrix}
    I-M_AM_A^* & -M_AM_B^* \\ -M_BM_A^* & I-
    M_BM_B^*\end{pmatrix}\begin{pmatrix} 0 & -1 \\ 1 & 0 \end{pmatrix} \\
  &= \begin{pmatrix}
    I-M_BM_B^* & M_BM_A^* \\ M_AM_B^* & I-
    M_AM_A^*\end{pmatrix}
\end{align}
The claimed formula for $k^{T^*}$ now follows by the same reasoning as before. 
\end{proof}

By their definition, we conclude that the spaces $\nbdom T$ and $\nbdom T^*$ coincide (isometrically) with the deBranges-Rovnyak operator range spaces $\mathcal M(A)$ and $\mathcal H(B)$ respectively. 

In a similar way the ranges $\nbran T$ and $\nbran T^*$ can be made into reproducing kernel Hilbert spaces: indeed, each range can be realized as the image of a contractive map from the graph of the operator back into $\mathcal H$:
\[
  \begin{pmatrix} f \\ g\end{pmatrix} \to g.
\]
Using the formula for the projection onto $G(T)$ as in the above proof, one can show that $\nbran T = \mathcal M(B)$ and $\nbran T^*=\mathcal H(A)$. Since we will not have cause to deal with these spaces as such, we omit the details.

\subsection{The Drury-Arveson space}
In the special case of the Drury-Arveson space $H^2_d$, we can say more about the Beurling pair $A,B$, in particular we have an analogue of the ``Pythagorean mate'' condition (\ref{eqn:pythagorean-mate}) described in the introduction.

Since we have exhibited $G(T)$ as the range of a partially isometric
multiplier, applying the theorem of Greene-Richter-Sundberg \cite{greene-richter-sundberg-2002} we
conclude that 
\begin{equation}
\begin{pmatrix} A(\zeta) \\ B(\zeta)\end{pmatrix}  
\end{equation}
is a partial isometry for almost every $\zeta\in\partial\mathbb B^d$; in
particular we conclude the $2\times 2$ matrix
\begin{equation}
\begin{pmatrix} A(\zeta)A(\zeta)^* & A(\zeta)B(\zeta)^*   \\ B(\zeta)
  A(\zeta)^* & B(\zeta) B(\zeta)^*\end{pmatrix}
\end{equation}
is a projection in $\mathbb C^2$ for almost every $\zeta$, and (again
by their theorem) the rank of these projections is a.e. constant. On
the other hand, since $B=hA$, the rank must be one, and since the
trace of a projection is its rank we conclude that
\begin{equation}\label{eqn:inner-seq}
  \|A(\zeta)\|^2+\|B(\zeta)\|^2 =1  
\end{equation}
for a.e. $\zeta\in\partial\mathbb B^d$. (This should be compared with the ``Pythagorean mate'' relation (\ref{eqn:pythagorean-mate})). Thus, for any function
$h$ defining a closed, densely defined multiplier, there exist $A, B$
as above so that 
\begin{equation}\label{eqn:h=B/A}
  |h(\zeta)|^2 = \frac{\|B(\zeta)\|^2}{\|A(\zeta)\|^2} \quad
    \text{a.e. } \zeta
\end{equation}
with $A, B$ satisfying (\ref{eqn:inner-seq}).  
\begin{cor}
  If $h$ is a densely defined multiplier of $H^2_d$, there exist
  contractive multiplier sequences
  \begin{equation}
    A=(a_1, a_2, \dots a_n, \dots) \quad \text{and}\quad B=(b_1, b_2,
    \dots b_n,\dots)    
  \end{equation}
such that for a.e. $\zeta\in\partial\mathbb B^d$
\begin{equation}
  \|A(\zeta)\|^2  = \frac{1}{1+|h(\zeta)|^2}, \quad   \|B(\zeta)\|^2 = \frac{|h(\zeta)|^2}{1+|h(\zeta)|^2}.
\end{equation}
\end{cor}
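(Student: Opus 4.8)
The plan is to derive the Corollary directly from the displayed relations (\ref{eqn:inner-seq}) and (\ref{eqn:h=B/A}) by a renormalization argument. The content of the Corollary is really just bookkeeping once the Beurling pair has been produced, so I do not expect a serious obstacle; the only thing to be careful about is the set of measure zero where $h$ may fail to be finite, and the case $A(\zeta)=0$.

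First I would invoke Theorem~\ref{thm:affiliated-implies-multiplication-CNP} to obtain nonzero multipliers $a, b$ with $b = ah$, so that $h$ is a genuine densely defined multiplier of $H^2_d$ in the sense of Section~\ref{sec:kernels}, and then take $T$ to be the associated closed operator $Tf = hf$ and let $(A,B)$ be its Beurling pair. The Greene–Richter–Sundberg argument reproduced in the text gives that $\bigl( \begin{smallmatrix} A(\zeta) \\ B(\zeta) \end{smallmatrix} \bigr)$ is a.e.\ a partial isometry of constant rank, necessarily rank one (since $B = hA$ forces the range to be spanned by a single vector wherever $A(\zeta)\neq 0$, and $A$ is not a.e.\ zero because $\mathrm{Ran}(M_A) = \nbdom T$ is dense), whence (\ref{eqn:inner-seq}) holds: $\|A(\zeta)\|^2 + \|B(\zeta)\|^2 = 1$ a.e. Moreover $B = hA$ gives $\|B(\zeta)\| = |h(\zeta)|\,\|A(\zeta)\|$ a.e., in particular $A(\zeta)\neq 0$ a.e.\ (if $A(\zeta)=0$ on a positive measure set then so is $B(\zeta)$ there, contradicting rank one a.e.).

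Now I would simply solve the two scalar equations $\|A(\zeta)\|^2 + \|B(\zeta)\|^2 = 1$ and $\|B(\zeta)\|^2 = |h(\zeta)|^2\|A(\zeta)\|^2$: substituting the second into the first yields $\|A(\zeta)\|^2(1 + |h(\zeta)|^2) = 1$, i.e.
\[
\|A(\zeta)\|^2 = \frac{1}{1+|h(\zeta)|^2}, \qquad \|B(\zeta)\|^2 = \frac{|h(\zeta)|^2}{1+|h(\zeta)|^2},
\]
for a.e.\ $\zeta\in\partial\mathbb B^d$, which is exactly the claimed formula. Writing $A = (a_1, a_2, \dots)$ and $B = (b_1, b_2, \dots)$ with respect to a fixed orthonormal basis of $\mathcal E$ (each $a_k$, $b_k$ a contractive multiplier of $H^2_d$, as noted in the proof of Theorem~\ref{thm:affiliated-implies-multiplication-CNP}) gives the contractive multiplier sequences asked for in the statement.

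If anything is delicate, it is only the measure-theoretic point that $h$ has finite nontangential boundary values a.e.\ (so that $|h(\zeta)|$ makes sense in the formula): this follows because $h = b/a$ with $a, b\in \mathrm{Mult}(H^2_d)$ and $a$ not identically zero, so $a$ has nonzero nontangential limits a.e.\ on $\partial\mathbb B^d$, and then $|h(\zeta)|^2 = \|B(\zeta)\|^2/\|A(\zeta)\|^2$ as in (\ref{eqn:h=B/A}) is well-defined a.e. With this in hand the Corollary is immediate from the algebra above, so I would keep the write-up to a few lines.
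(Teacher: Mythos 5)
Your proposal is correct and follows essentially the same route as the paper: the paper's proof is the one-line observation that the formulas follow by combining (\ref{eqn:inner-seq}) and (\ref{eqn:h=B/A}), which is exactly the algebraic substitution you carry out. Your additional care about the a.e.\ nonvanishing of $A(\zeta)$ and the finiteness of the boundary values of $h$ is a reasonable elaboration of points the paper leaves implicit, but it does not change the argument.
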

\begin{proof}
This is immediate, on combining (\ref{eqn:inner-seq}) and (\ref{eqn:h=B/A}).
\end{proof}

\subsection{Density of kernel functions in $\nbdom T ^*$}\label{sec:density}
As observed in the introduction, it is possible to prove many of the classical facts about the non-extreme $\cH(b)$ spaces in the disk by identifying them with $\nbdom T^*$ for a densely defined multiplier $T$. In particular, a basic fact about the $\cH(b)$ spaces in the disk is that they contain the polynomials and Szeg\H{o} kernels, and each of these systems of functions span a dense linear subspace of $\cH (b)$. For the non-extreme $\cH b)$ subspaces of the Drury-Arveson space, it was shown in \cite{jury-2014} that these spaces contain the polynomials, and the question was posed of whether or not the polynomials are dense. This question is still open. However if instead we regard $\nbdom T^*$ as the proper analog of the non-extreme $\mathcal H(b)$ spaces, the question will have a positive answer. In this section we take up the case of the density of the kernel functions in the case of $\nbdom T^*$ in a general CNP space; the result for polynomials in the case of $H^2_d$ may be obtained by similar reasoning, but we will deduce it instead from a stronger result in Section~\ref{sec:invariant-subspaces} (and give a second, more constructive proof in Section~\ref{sec:constructive}).

Let $h$ be an
unbounded multiplier, and let $T$ the operator of multiplication by $h$
with dense domain $\nbdom T$ as above. We have row multipliers
$A=(a_1\ a_2\ \dots), B=(b_1,\ b_2, \dots)$ such that 
\begin{equation}
  G(T) = \nbran \begin{pmatrix} M_A \\ M_B\end{pmatrix}
\end{equation}
and hence also 
\begin{equation}
  G(T^*)^\bot   = \nbran \begin{pmatrix} M_B \\ -M_A\end{pmatrix}.
\end{equation}
We have also seen that $\nbdom T ^*$ (equipped with the
graph norm) is isometrically the
space $\mathcal H(B)$, via the map from $\nbdom T^*$ to $G(T^*)$
given by
\begin{equation}
  f\to \begin{pmatrix} f\\ T^*f\end{pmatrix}.  
\end{equation}
Fundamental to the study of multiplication operators in reproducing kernel Hilbert spaces is the eigenfunction property of the kernels, namely for every $x\in X$ and $\varphi\in \mr{Mult} \, \cH$ we have
\[
  M_\varphi^* k_x=\varphi(x)^*k_x.
\]
We begin this section by showing this extends to the unbounded case. 

\begin{lemma}\label{lem:DA-kernel-formulas}
  Let $\mathcal H$ be a CNP kernel space on a set $X$ with kernel $k$, and $h$ an unbounded multiplier of $\mathcal H$. Then for
  each $x\in X$, we have $k_x\in \nbdom T^*=\mathcal H(B)$ with 
  \begin{equation}
    T^*k_x = h(x)^*k_x.    
  \end{equation}
Moreover for all $x\in X$ and all $f\in \mathcal H(B)$
\begin{equation}
  \langle f,k_x\rangle_{\mathcal H(B)} = f(x) + h(x)(T^*f)(x)  
\end{equation}
so in particular
\begin{equation}
  \langle k_y, k_x\rangle_ {\mathcal H(B)} = k(x,y) +h(x)h(y)^*k(x,y)
\end{equation}
\end{lemma}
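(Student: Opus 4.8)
The plan is to verify the three displayed formulas in turn, using the realization of $\nbdom T^*$ as $\mathcal{H}(B)$ and the concrete form of $G(T^*)$ given by the Beurling pair. First I would establish that $k_x \in \mathcal{H}(B)$ with $T^*k_x = h(x)^*k_x$. Recall $T^*$ is densely defined since $T$ is closed and densely defined, and $\nbdom T^*$ is $M_\varphi^*$-invariant with $T^*M_\varphi^* = M_\varphi^* T^*$ on $\nbdom T^*$ (this is stated in Section~\ref{sec:kernels}). I would take any $\varphi \in \multH$ with $\varphi \in \nbdom T$ (these exist and in fact span a dense subspace, since the $a_k$ from the Beurling pair lie in $\nbdom T$), and for $f \in \nbdom T$ compute $\langle f, T^* k_x \rangle = \langle Tf, k_x\rangle = (Tf)(x) = h(x)f(x) = \overline{h(x)^*}\,\langle f, k_x\rangle = \langle f, h(x)^* k_x\rangle$; since $\nbdom T$ is dense this shows $k_x \in \nbdom T^*$ with $T^*k_x = h(x)^* k_x$. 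Equivalently, one can see this directly from $G(T^*)^\perp = \nbran \begin{pmatrix} M_B \\ -M_A \end{pmatrix}$: the vector $\begin{pmatrix} k_x \\ h(x)^* k_x\end{pmatrix}$ is orthogonal to every $\begin{pmatrix} BF \\ -AF\end{pmatrix}$ because $\langle BF, k_x\rangle - \langle h(x)^* k_x, AF\rangle = B(x)F(x) - h(x)A(x)F(x) = 0$ using $B = hA$ pointwise, hence this vector lies in $G(T^*)$ and its first coordinate $k_x$ is in $\nbdom T^*$ with $T^*k_x$ equal to the second.

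Next I would derive the reproducing-type formula $\langle f, k_x\rangle_{\mathcal{H}(B)} = f(x) + h(x)(T^*f)(x)$. Here the left side is the inner product in the graph norm of $T^*$, i.e. under the unitary $f \mapsto \begin{pmatrix} f \\ T^*f\end{pmatrix}$ from $\nbdom T^*$ onto $G(T^*)$, so $\langle f, k_x\rangle_{\mathcal{H}(B)} = \left\langle \begin{pmatrix} f \\ T^*f\end{pmatrix}, \begin{pmatrix} k_x \\ T^*k_x\end{pmatrix}\right\rangle_{\mathcal{H}\oplus\mathcal{H}} = \langle f, k_x\rangle_{\mathcal{H}} + \langle T^*f, T^*k_x\rangle_{\mathcal{H}}$. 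Substituting $T^*k_x = h(x)^* k_x$ from the first step gives $f(x) + \overline{h(x)^*}\,\langle T^*f, k_x\rangle_{\mathcal{H}} = f(x) + h(x)(T^*f)(x)$, which is exactly the claim.

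Finally, the third formula is the special case $f = k_y$ of the second. Plugging in: $\langle k_y, k_x\rangle_{\mathcal{H}(B)} = k_y(x) + h(x)(T^*k_y)(x) = k(x,y) + h(x)\cdot h(y)^* k_y(x) = k(x,y) + h(x)h(y)^* k(x,y)$, using $k_y(x) = k(x,y)$ and $T^*k_y = h(y)^* k_y$ from the first step.

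I do not expect any serious obstacle here; the lemma is essentially bookkeeping once the identification $\nbdom T^* \cong \mathcal{H}(B) \cong G(T^*)$ and the factorization $B = hA$ are in hand. The one point requiring a little care is the very first claim, $k_x \in \nbdom T^*$: one must be sure the adjoint computation is carried out against a genuinely dense subspace of $\nbdom T$ (not merely against multipliers, which need not all lie in $\nbdom T$), and the cleanest route is the direct verification using $G(T^*)^\perp = \nbran\begin{pmatrix} M_B \\ -M_A\end{pmatrix}$ together with $B(x) = h(x)A(x)$, which sidesteps density altogether. Everything after that is a routine unwinding of the graph inner product.
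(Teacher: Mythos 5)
Your proposal is correct and follows essentially the same route as the paper: the boundedness of the functional $f\mapsto\langle Tf,k_x\rangle=h(x)f(x)$ on $\nbdom T$ gives $k_x\in\nbdom T^*$ with $T^*k_x=h(x)^*k_x$, and the remaining two formulas follow by unwinding the graph inner product exactly as you describe. Your alternative verification via $G(T^*)^\perp=\nbran\begin{pmatrix} M_B\\ -M_A\end{pmatrix}$ and $B=hA$ is a valid bonus, but the primary argument already coincides with the paper's.
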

\begin{proof}
  Fix a point $x\in X$. Then for any function $f\in \nbdom T $, 
  \begin{equation}
    |\langle Tf,k_x\rangle | =|h(x)f(x)|\leq |h(x)|\|k_x\| \|f\|,     
  \end{equation}
which shows that $k_x\in \nbdom T^*$. It is then immediate that
$\langle f, T^*k_x\rangle = \langle Tf, k_x\rangle = \langle
hf,k_x\rangle$ and hence $T^*k_x = h(x)^*k_x$.  

For the second claim, identifying $\mathcal H(B)$ with the graph of
$T^*$ we obtain
\begin{align}
  \langle f, k_x\rangle_{\mathcal H(B)} &=
  \left\langle \begin{pmatrix}f \\ T^*f \end{pmatrix} , \begin{pmatrix}
      k_x \\ h(x)^*k_x\end{pmatrix}\right\rangle \\
&= \langle f,k_x\rangle + \langle T^*f, h(x)^*k_x\rangle \\
&= f(x) +h(x)(T^*f)(x).
  \end{align}
\end{proof}
\begin{prop}
  The span of the kernel functions $k_x$, as $x$ ranges over
  $X$, is dense in $\mathcal H(B) \cong \nbdom T ^*$. 
\end{prop}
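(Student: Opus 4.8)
The plan is to argue by duality in the graph norm of $\mathcal{H}(B)\cong\nbdom T^*$: it suffices to show that if $f\in\mathcal H(B)$ is orthogonal (in the $\mathcal H(B)$ inner product) to every kernel function $k_x$, $x\in X$, then $f=0$. The entire content of the orthogonality condition is extracted from the second formula in Lemma~\ref{lem:DA-kernel-formulas}: for every $x\in X$,
\[
0=\langle f,k_x\rangle_{\mathcal H(B)} = f(x)+h(x)(T^*f)(x),
\]
so that, as functions on $X$, $f=-h\cdot(T^*f)$.

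Now I would exploit the symmetry of this identity between $f$ and $T^*f$. Put $g:=T^*f\in\mathcal H$. The displayed identity says $h\cdot g=-f\in\mathcal H$, so $g$ lies in the maximal domain $\nbdom T$ of multiplication by $h$, and $Tg=hg=-f$. On the other hand $f\in\nbdom T^*$ by hypothesis, and $T^*f=g$; hence $Tg=-f\in\nbdom T^*$ with $T^*(Tg)=-T^*f=-g$. In other words $g\in\nbdom(T^*T)$ and $T^*Tg=-g$. Since $T$ is closed and densely defined, $T^*T$ is a positive self-adjoint operator, and concretely, $g\in\nbdom T$, $Tg\in\nbdom T^*$ give
\[
\|Tg\|^2_{\mathcal H} = \langle T^*Tg,\,g\rangle_{\mathcal H} = \langle -g,\,g\rangle_{\mathcal H} = -\|g\|^2_{\mathcal H}.
\]
Both sides are therefore zero, so $g=0$ and consequently $f=-Tg=0$. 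Since the only element of $\mathcal H(B)$ orthogonal to all $k_x$ is $0$, the span of $\{k_x:x\in X\}$ is dense.

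I do not expect a serious obstacle here; the proof is essentially a two-line consequence of Lemma~\ref{lem:DA-kernel-formulas} together with the positivity of $T^*T$. The only point requiring care is the bookkeeping of domains---verifying that $g=T^*f$ really does land in $\nbdom T$ (which is immediate from $hg=-f\in\mathcal H$ and the fact that $\nbdom T$ is the \emph{maximal} such domain, by Proposition~\ref{prop:multipliers-are-closeable-CNP}) and that $Tg\in\nbdom T^*$ so that the chain $g\in\nbdom(T^*T)$, $T^*Tg=-g$ is legitimate before invoking $\langle T^*Tg,g\rangle=\|Tg\|^2$.
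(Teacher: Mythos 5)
Your proposal is correct and follows essentially the same route as the paper: orthogonality to all $k_x$ plus the formula $\langle f,k_x\rangle_{\mathcal H(B)}=f(x)+h(x)(T^*f)(x)$ gives $h\,(T^*f)=-f\in\mathcal H$, hence $T^*f\in\nbdom T$ by maximality of the domain, and then the adjoint identity forces $\|f\|^2=-\|T^*f\|^2$, so $f=0$. The only cosmetic difference is that you phrase the final inner-product computation as $\|Tg\|^2=\langle T^*Tg,g\rangle$ with $g=T^*f$, while the paper pairs $TT^*f=-f$ directly against $f$; the domain bookkeeping you flag is exactly the point the paper also checks.
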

\begin{proof}
  Suppose $f\in \mathcal H(B)$ is orthogonal to all the $k_x$. By the
  lemma we then have 
  \begin{equation}
    0=\langle f, k_x\rangle_{\mathcal H(B)} = f(x) +h(x)(T^*f)(x)    
  \end{equation}
for all $x\in X$, hence 
\begin{equation}\label{eqn:TT*f}
  h(x) (T^*f)(x) = -f(x).  
\end{equation}
But $f$ is in $\mathcal H(B)$, hence in $\mathcal H$, so $h(T^*f)$ is in
$\mathcal H$ and therefore $T^*f$ belongs to $\nbdom T $ (by the definition of
this domain).  Thus we can rewrite (\ref{eqn:TT*f}) as
\[
  TT^*f =-f,
\]
so taking the inner product with $f$, and again using the fact that $f\in \nbdom T ^*$, we get
\[
  -\|f\|^2 = \langle TT^*f, f\rangle = \langle T^*f, T^*f\rangle = \|T^*f\|^2\geq 0,
\]
whence $f=0$. 

As a consequence, we see that the span of the kernel functions $\{k_x\}_{x\in X}\subset \nbdom T ^*$ is a {\em core} for the operator $T^*$. 
\end{proof}

\section{Representing pairs and containments}\label{sec:representing-pairs}

For a densely defined multiplier $h$, we have seen from the proof of Theorem~\ref{thm:affiliated-implies-multiplication-CNP} that its graph is the
range of a $2\times N$ (generically $N=\infty$) partially isometric
multiplier
\begin{equation*}
  \Theta=\begin{pmatrix} A \\ B\end{pmatrix}.
\end{equation*}
On the other hand, we also proved in Theorem~\ref{thm:affiliated-implies-multiplication-CNP} that there exist (non-unique) scalar multipliers $a,b$ so that $b=ah$. We now describe the relationship between the Beurling pair $A,B$ and the scalar multipliers $a,b$.

\begin{defn}
Let $h$ be a densely defined multiplier. We say a pair of bounded (nonzero) multipliers $a,b\in \mr{Mult} \, \cH$ is a {\em representing pair} for $h$ if:
\begin{itemize}
\item[i)]  $b=ah$, and
\item[ii)]   the column $\theta=\begin{pmatrix} a \\ b\end{pmatrix}$ is contractive.
\end{itemize}
\end{defn}
 The condition (ii) is just a normalizing condition that will sometimes be convenient. The following theorem characterizes all representing pairs in terms of the Beurling pair $A,B$. 

 For a contractive multiplier $b\in \mr{Mult} \, \cH$, we let $\mathcal H(b)$ denote the reproducing kernel Hilbert space with kernel
 \[
   k^b(x,y) = (1-b(x)b(y)^*)k(x,y).
 \]
 and for any $a\in \mr{Mult} \, \cH$ let $\mathcal M(a)$ be the reproducing kernel Hilbert space with kernel
 \[
   k_a(x,y) = a(x)a(y)^* k(x,y).
 \]
 The spaces $\mathcal H(b)$, as a set, is equal to the range of the operator $(I-M_bM_b^*)^{1/2}$, re-normed so as to make this operator a partial isometry of $\mathcal H$ onto $\mathcal H(b)$. Similarly, $\mathcal M(a)$ is a re-normed image of $(M_aM_a^*)^{1/2}$. See the introduction to Sarason's book \cite{sarason-1994} for generic facts about $\mathcal H(b)$, $\mathcal M(a)$.

 In particular, the space $\mathcal H(b)$ is contractively contained in $\mathcal H$. The next proposition gives a parameterization of all the (contractive) representing pairs $(a,b)$; they are all ``subordinate'' to the Beurling pair $A,B$.

 \begin{prop}\label{prop:big-phi-little-phi-CNP}
   Let $h$ be a densely defined multiplier of the $CNP$ space $\mathcal H$, with Beurling pair $A,B\in (\mr{Mult} \, \mathcal H\otimes \mathcal E, \mathcal H)$. Then for every representing pair $(a,b)$, there exists a contractive multiplier $\Phi\in (\mr{Mult} \, \mathcal H, \mathcal H\otimes \mathcal E)$ such that
   \[
     a=A\Phi\quad \text{and} \quad b=B\Phi.
   \]
   Moreover, for every representing pair $(a,b)$ we have contractive containments
   \[
     \mathcal M(a)\subset \mathcal M(A) \quad \text{and} \quad \mathcal H(B)\subset \mathcal H(b)
   \]
 \end{prop}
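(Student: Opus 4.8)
The plan is to exploit the fact, established in the proof of Theorem~\ref{thm:affiliated-implies-multiplication-CNP}, that $G(T) = \nbran(M_\Theta)$ with $\Theta = \begin{pmatrix} A \\ B \end{pmatrix}$ a \emph{partially isometric} multiplier. Given a representing pair $(a,b)$, condition (i) says $b = ah$, and since $a \in \nbdom T$ (as $a \cdot h = b \in \mathcal H$), the column $\begin{pmatrix} a \\ b \end{pmatrix} = \begin{pmatrix} a \\ Ta \end{pmatrix}$ lies in $G(T) = \nbran(M_\Theta)$. More precisely, the constant function $\mathbf 1 \in \mathcal H$ (recall $\mr{Mult}\,\mathcal H \subset \mathcal H$), and since $M_\theta \mathbf 1 = \theta = \begin{pmatrix} a \\ b \end{pmatrix}$ evaluated appropriately, I want to realize $\theta$ as $M_\Theta$ applied to some element. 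The cleanest route: since $M_\Theta$ is a partial isometry onto $G(T)$, the vector $\theta \in G(T)$ equals $M_\Theta M_\Theta^* \theta$; set $F := M_\Theta^* \theta \in \mathcal H \otimes \mathcal E$. Then $M_\Theta F = \theta$, i.e. $A(x)F(x) = a(x)$ and $B(x)F(x) = b(x)$ for all $x$. The task is then to promote $F$ (a priori just a vector in $\mathcal H \otimes \mathcal E$) to a \emph{contractive multiplier} $\Phi \in (\mr{Mult}\,\mathcal H, \mathcal H \otimes \mathcal E)$ with $A\Phi = a$, $B\Phi = b$.

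First I would handle the multiplier claim via a Leech/Nevanlinna–Pick-type factorization argument, which is available because $\mathcal H$ is CNP. The relation $A\Phi = a$ should be set up as a Leech problem: one wants $\Phi$ with $\Theta \Phi = \theta$ as column multipliers, $\|\Phi\| \le 1$. Since $\Theta$ is a partial isometry (in particular coisometric on the complement of its kernel), the operator identity $M_\Theta M_\Phi = M_\theta$ on the level of multiplication operators, together with $\|\theta\|_{\mr{Mult}} \le 1 = \|\Theta\|_{\mr{Mult}}$, is exactly the hypothesis of the CNP Leech theorem, which produces a contractive multiplier solution $\Phi$. Concretely: the inequality $\theta(x)\theta(y)^* k(x,y) \preceq \Theta(x)\Theta(y)^* k(x,y)$ as kernels (positivity of $(\Theta(x)\Theta(y)^* - \theta(x)\theta(y)^*)k(x,y)$, which holds since $M_\Theta M_\Theta^* - M_\theta M_\theta^* = M_\Theta M_\Theta^* - M_\theta M_\theta^* \succeq 0$ because $\theta \in G(T)= \nbran M_\Theta$ and $M_\Theta$ is a partial isometry) is precisely the solvability criterion; the CNP property upgrades this to existence of a contractive multiplier $\Phi$ with $\Theta\Phi = \theta$, hence $A\Phi = a$ and $B\Phi = b$.

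For the containment statements, once $a = A\Phi$ and $b = B\Phi$ with $\|\Phi\|_{\mr{Mult}} \le 1$, the inclusion $\mathcal M(a) \subset \mathcal M(A)$ is a standard kernel-domination fact: $k_a(x,y) = a(x)a(y)^* k(x,y) = A(x)\Phi(x)\Phi(y)^* A(y)^* k(x,y)$, and since $\Phi(x)\Phi(y)^*$ is dominated (as a multiplier) by the identity, $k_a \preceq k^A := A(x)A(y)^* k(x,y)$, giving the contractive containment $\mathcal M(a) \subset \mathcal M(A)$ by the Aronszajn inclusion theorem for RKHS. Dually, for $\mathcal H(B) \subset \mathcal H(b)$: write $k^b(x,y) - k^{B}(x,y) = (1 - b(x)b(y)^*)k(x,y) - (1 - B(x)B(y)^*)k(x,y) = (B(x)B(y)^* - b(x)b(y)^*)k(x,y) = B(x)(I - \Phi(x)\Phi(y)^*)B(y)^* k(x,y) \succeq 0$ since $I - \Phi(x)\Phi(y)^* \succeq 0$ as a multiplier (this is just $\|\Phi\|_{\mr{Mult}} \le 1$) and conjugating a positive kernel by $B$ preserves positivity; hence $k^{B} \preceq k^b$ and again Aronszajn gives $\mathcal H(B) \subset \mathcal H(b)$ contractively.

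The main obstacle I anticipate is the promotion of the \emph{vector} $F = M_\Theta^*\theta$ to a \emph{multiplier} $\Phi$ — i.e. verifying carefully that the Leech factorization theorem applies. The subtlety is that $\Theta$ is a \emph{row} multiplier with infinitely many components and the target column $\theta$ has two components, so one must be careful that the relevant kernel positivity $\Theta(x)\Theta(y)^* k(x,y) - \theta(x)\theta(y)^* k(x,y) \succeq 0$ genuinely holds — this is where the partial-isometry property of $\Theta$ (not merely contractivity) is essential, since it gives $M_\Theta M_\Theta^* = P_{G(T)} \succeq M_\theta M_\theta^*$ (the latter because $\nbran M_\theta \subseteq G(T)$). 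Once that positivity is in hand, the CNP Leech theorem of Ball–Trent–Vinnikov delivers $\Phi$; the remaining containment arguments are then routine Aronszajn kernel-domination.
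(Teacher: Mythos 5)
Your proposal is correct and follows essentially the same route as the paper: establish the operator inequality $M_\theta M_\theta^*\leq M_\Theta M_\Theta^*$ from the partial-isometry property of $M_\Theta$ together with $\nbran M_\theta\subseteq G(T)$, translate this to positivity of the kernel $(\Theta(x)\Theta(y)^*-\theta(x)\theta(y)^*)k(x,y)$, invoke the Ball--Trent--Vinnikov interpolation/Leech theorem to produce the contractive $\Phi$, and then obtain the two containments from kernel domination. The only cosmetic difference is that the paper reads the containments off the diagonal entries of the positive $2\times 2$ matrix kernel directly, whereas you re-derive them by conjugating $I-\Phi(x)\Phi(y)^*$ by $A$ and $B$; both are valid.
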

 
\begin{proof}
  By its definition, for the Beurling pair $A,B$ we have that the multiplier
  \[
    M_\Theta= \begin{pmatrix}M_A \\ M_B\end{pmatrix}
  \]
  is a partial isometry mapping $\mathcal H\otimes \mathcal E$ onto the graph $G(T)$. Thus, the operator $M_\Theta M_\Theta^*$ is the orthogonal projection of $\mathcal H\oplus \mathcal H$ onto $G(T)$. If we fix a (contractive) representing pair $(a,b)$ and form the column multiplier
  \[
    M_\theta = \begin{pmatrix}M_a \\ M_b\end{pmatrix},
  \]
  then the operator $M_\theta M_\theta^*$ is a contractive map from $\mathcal H$ into $\mathcal H\oplus \mathcal H$ whose range is contained in the range of the projection $M_\Theta M_\Theta^*$, and therefore we have the inequality of positive operators
  \begin{equation}
    M_\theta M_\theta^*\leq M_\Theta M_\Theta^*. 
  \end{equation}
  This entails (and is really equivalent to) the contractive containment of the space $\mathcal M(M_\theta)\subset \mathcal M(M_\Theta)$, and is also equivalent to the positivity of the $2\times 2$ matrix valued kernel
  \begin{equation}\label{eqn:theta-kernel}
    (\Theta(x)\Theta(y)^* - \theta(x)\theta(y)^*)k(x,y).
  \end{equation}
  By the general form of the Nevanlinna-Pick interpolation theorem in CNP spaces \cite{ball-trent-vinnikov-2001}, from the positivity of this kernel we deduce the existence of a contractive multiplier $\Phi\in \mr{Mult} \, (\mathcal H, \mathcal H\otimes \mathcal E)$ so that $\theta = \Theta \Phi$.  This proves the first statement of the theorem.

  The diagonal entries of the kernel (\ref{eqn:theta-kernel}) are
  \begin{equation}\label{eqn:a-kernel}
    (A(x)A(y)^*-a(x)a(y)^*)k(x,y)
  \end{equation}
  and
  \begin{equation}\label{eqn:b-kernel}
    (B(x)B(y)^* -b(x)b(y)^*)k(x,y)
  \end{equation}
  The positivity of (\ref{eqn:a-kernel}) expresses the contractive containment $\mathcal M(a)\subset \mathcal M(A)$, while the positivity of (\ref{eqn:b-kernel}) is equivalent to the positivity of
  \[
    (1-b(x)b(y)^*)k(x,y) - (1-B(x)B(y)^*)k(x,y),
  \]
  which is the contractive containment $\mathcal H(B)\subset \mathcal H(b)$.
\end{proof}
{\bf Remark:} Later in this section we will prove more refined relationships between the spaces $\mathcal M(A)$, $\mathcal H(B)$ and the families $\mathcal M(a), \mathcal H(b)$ respectively.

{\bf Remark:} In general, for the column partial isometry $\Theta = \begin{pmatrix} A \\ B\end{pmatrix}$, the row $\Theta^t=\begin{pmatrix} A & B\end{pmatrix}$ is {\em not} contractive. To say that $\Theta^t=\begin{pmatrix} A & B\end{pmatrix}$ is contractive means that the kernel 
      \begin{equation}
        (1-B(x)B(y)^* - A(x)A(y)^*)k(x,y)
      \end{equation}
is positive. This kernel is positive if and only if there is a contractive containment $\mathcal H(B)\subset \mathcal M(A)$, or, by the way $A$ and $B$ are constructed from the function $h$, that $\nbdom T ^*\subset \nbdom T$ contractively. This in turn means that for all $f\in \nbdom T$, by the definition of the graph norms
\begin{equation}
  \|f\|^2+\|T^*f\|^2\leq \|f\|^2 +\|Tf\|^2, \quad \text{ or simply }\quad  \|T^*f\|^2\leq \|Tf\|^2
\end{equation}
But if we choose $h$ to be a bounded multiplier to start with, this is just equivalent to the statement that the multiplication operator $T$ is hyponormal, which is typically false for multipliers of CNP spaces; for example in the two-variable Drury-Arveson space $H^2_d$ the coordinate multipliers $z_1$, $z_2$ already fail to be hyponormal. Generically, multipliers of CNP spaces will not be hyponormal except in special cases such as $H^2$ of the disk; see \cite{hartz-2015}.

\begin{lemma}\label{lem:b*f=a*g-CNP}
  If $a,b$ is a representing pair for the densely defined multiplier $h$, then for every $f\in \nbdom T ^*$ there exists a function $g\in \mathcal H$ such that
  \begin{equation}\label{eqn:b*f=a*g-CNP}
    M_b^*f =M_a^* g.
  \end{equation}
More generally, a function $f$ belongs to $\nbdom T ^*$ if and only if there exists a function $g\in \mathcal H$ such that (\ref{eqn:b*f=a*g-CNP}) holds for {\em every} representing pair $a,b$; when such $g$ exists it is unique and given by $g=T^*f$. 
\end{lemma}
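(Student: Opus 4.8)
The plan is to exploit the identity $M_b = T M_a$ (valid for every representing pair) together with the Beurling description $\nbdom T = \ran{M_A}$ that was established in the proof of Theorem~\ref{thm:affiliated-implies-multiplication-CNP}. The first step is to record the basic observation underlying everything: if $(a,b)$ is a representing pair, then $M_a$ maps $\mathcal{H}$ into $\nbdom T$ and $T M_a = M_b$ there. Indeed, for $\psi\in\mathcal{H}$ the function $a\psi$ lies in $\mathcal{H}$, and $h(a\psi) = (ha)\psi = b\psi\in\mathcal{H}$ since $b=ah$ is a bounded multiplier; hence $a\psi\in\nbdom T$ and $T(a\psi)=b\psi$.

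Granting this, the forward direction is immediate: given $f\in\nbdom T^*$, set $g=T^*f$. For any representing pair $(a,b)$ and any $\psi\in\mathcal{H}$,
\[
  \langle M_b^* f,\psi\rangle = \langle f, M_b\psi\rangle = \langle f, TM_a\psi\rangle = \langle T^*f, M_a\psi\rangle = \langle M_a^* g,\psi\rangle,
\]
so $M_b^* f = M_a^* g$; note that the single vector $g=T^*f$ works simultaneously for all representing pairs, which already proves more than the first assertion.

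For the converse, fix $g\in\mathcal{H}$ with $M_b^* f = M_a^* g$ for every representing pair $(a,b)$, and apply this to the coordinate pairs of the Beurling pair $A=(a_1,a_2,\dots)$, $B=(b_1,b_2,\dots)$ for $h$. Since $\Theta=\begin{pmatrix} A \\ B\end{pmatrix}$ is a partial isometry, each column $\begin{pmatrix} a_k \\ b_k\end{pmatrix}$ is a contractive multiplier, and $b_k = h a_k$ for each $k$ by Theorem~\ref{thm:affiliated-implies-multiplication-CNP} (discarding the indices with $a_k\equiv 0$, for which also $b_k\equiv 0$), so each surviving $(a_k,b_k)$ is a representing pair and $M_{b_k}^* f = M_{a_k}^* g$. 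Now take an arbitrary $\phi\in\nbdom T=\ran{M_A}$ and write $\phi = M_A\psi = \sum_k M_{a_k}\psi_k$ with $\psi=(\psi_k)\in\mathcal{H}\otimes\mathcal{E}$, so that correspondingly $T\phi = M_B\psi = \sum_k M_{b_k}\psi_k$, both series converging in $\mathcal{H}$ because $M_A$ and $M_B$ are bounded. Then
\[
  \langle T\phi, f\rangle = \sum_k \langle M_{b_k}\psi_k, f\rangle = \sum_k \langle \psi_k, M_{a_k}^* g\rangle = \langle M_A\psi, g\rangle = \langle \phi, g\rangle.
\]
Since $\phi\in\nbdom T$ was arbitrary, the definition of the adjoint gives $f\in\nbdom T^*$ with $T^*f = g$; uniqueness of $g$ is then automatic, since $g=T^*f$ is determined by $f$ (alternatively, any two candidates differ by a vector orthogonal to $\ran{M_A}$, which is dense).

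The only genuinely delicate points are the appeal to $\nbdom T=\ran{M_A}$ --- which is exactly the content of the Beurling-type structure theorem of Section~\ref{sec:affiliated}, hence available --- and the interchange of the infinite sum with the inner product, justified by norm convergence of $M_A\psi$ and $M_B\psi$. I do not expect a serious obstacle; the point requiring care is verifying that the coordinate pairs $(a_k,b_k)$ really are representing pairs (contractivity of the columns from $\Theta$ being a partial isometry, nonzeroness after discarding trivial coordinates), since it is precisely these pairs that make the ``for every representing pair'' hypothesis do its work in the converse direction.
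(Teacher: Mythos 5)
Your proof is correct and follows essentially the same route as the paper's: the forward direction takes $g=T^*f$, and the converse applies the hypothesis to the coordinates of the Beurling pair to get $M_B^*f=M_A^*g$, which (whether unwound via the definition of the adjoint on $\nbdom T=\ran{M_A}$, as you do, or via the relation $G(T^*)=JG(T)^\bot$, as the paper does) says exactly that $\bigl(\begin{smallmatrix} f \\ g\end{smallmatrix}\bigr)\in G(T^*)$. Your extra care about the coordinate columns being contractive and about discarding the identically-zero coordinates (where $a_k\equiv 0$ forces $b_k\equiv 0$) is a worthwhile detail the paper glosses over.
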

\begin{proof}
  Indeed, fix $f$ and a representing pair, putting $g=T^*f$ gives (\ref{eqn:b*f=a*g-CNP}).  Conversely, for a given $f$, if there exists a single $g$ that works for all representing pairs, then it works for all coordinates $a_j, b_j$ of the Beurling pair $A,B$, which means that
  \begin{equation}\label{eqn:B*f=A*g}
    M_B^*f = M_A^*g.
  \end{equation}
Recalling that the graph of $T$ is the range of the column $\begin{pmatrix} M_A \\ M_B\end{pmatrix}$, and that $G(T^*) = JG(T)^\bot$, we see that the relation (\ref{eqn:B*f=A*g}) holds precisely when $\begin{pmatrix} f \\ g\end{pmatrix}$ belongs to the graph of $T^*$, i.e. precisely when $f\in \nbdom T ^*$ and $g=T^*f$. 
\end{proof}
{\bf Remark:} Of course, for particular choices of representing pair $(a,b)$, there can be many solutions $g$ to $M_b^*f=M_a^*g$; indeed, if $a$ is not a cyclic multiplier (that is, if the range of $M_a$ is not dense, which it need not be) then $\nbker M_a^*$ is nontrivial, so that if $g$ solves $M_b^*f=M_a^*g$, then so does $g+g_0$ for any $g_0\in \nbker M_a^*$.

\begin{lemma}\label{lem:containment-Ma*}
  If $a,b$ is a representing pair for the densely defined multiplier $h$, then $\mathcal M(a^*)$ is contained contractively in  $\mathcal H(B)$. Moreover, if $a$ is nonvanishing, then $\mathcal M(a^*)$ is dense in $\mathcal H(B)$. 
\end{lemma}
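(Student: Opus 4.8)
The plan is to read $\mathcal M(a^*)$ as the operator range $\mr{Ran}(M_a^*)\subseteq\cH$, renormed so that $M_a^*\colon\cH\to\mathcal M(a^*)$ is a coisometry; thus $\|M_a^*f\|_{\mathcal M(a^*)}=\|Pf\|_{\cH}$, where $P$ is the orthogonal projection of $\cH$ onto $(\ker{M_a^*})^\perp=\overline{\mr{Ran}(M_a)}$, and $\mathcal M(a^*)$ is the reproducing kernel Hilbert space with kernel $(x,y)\mapsto (M_a^*M_ak_y)(x)$. The first step is to show $M_a^*\cH\subseteq\nbdom T^*$, with $T^*M_a^*f=M_b^*f$ for every $f\in\cH$. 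For this I would fix $f$ and note that for $\phi\in\nbdom T$,
\[
  \ip{T\phi}{M_a^*f}=\ip{a\,(T\phi)}{f}=\ip{a\,(h\phi)}{f}=\ip{(ah)\phi}{f}=\ip{b\phi}{f}=\ip{\phi}{M_b^*f},
\]
using $b=ah$ and that $h\phi\in\cH$ when $\phi\in\nbdom T$. Hence $\phi\mapsto\ip{T\phi}{M_a^*f}$ is bounded for the $\cH$-norm on $\nbdom T$, so $M_a^*f\in\nbdom T^*$ and $T^*M_a^*f=M_b^*f$; equivalently $M_b^*=T^*M_a^*$ as bounded operators on $\cH$, and in particular $\ker{M_a^*}\subseteq\ker{M_b^*}$. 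Since $\mathcal M(a^*)=\mr{Ran}(M_a^*)$ as a set, this already gives $\mathcal M(a^*)\subseteq\nbdom T^*=\mathcal H(B)$.

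For the contractive containment, since $\ker{M_a^*}\subseteq\ker{M_b^*}$ we have $M_a^*f=M_a^*(Pf)$ and $M_b^*f=M_b^*(Pf)$, so it suffices to bound $\|M_a^*f\|_{\nbdom T^*}$ for $f\in(\ker{M_a^*})^\perp$, where $\|M_a^*f\|_{\mathcal M(a^*)}=\|f\|_{\cH}$. Because $\nbdom T^*$ carries the graph norm and $T^*M_a^*f=M_b^*f$,
\[
  \|M_a^*f\|_{\nbdom T^*}^2=\|M_a^*f\|_{\cH}^2+\|M_b^*f\|_{\cH}^2,
\]
so the needed inequality $\|M_a^*f\|_{\nbdom T^*}\le\|f\|_{\cH}$ amounts to $\|M_a^*f\|^2+\|M_b^*f\|^2\le\|f\|^2$, i.e.\ $M_aM_a^*+M_bM_b^*\le I$: the statement that the row multiplier $\begin{pmatrix}a&b\end{pmatrix}$ is contractive (equivalently, that $\mathcal M(a)\subseteq\mathcal H(b)$ contractively). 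This is the step I expect to be the main obstacle, because ``column contractive'' does not entail ``row contractive'' for general operators; here it should hold precisely because $b=ah$ forces $\theta(x)=a(x)\begin{pmatrix}1\\h(x)\end{pmatrix}$ and because $M_a$ and $M_b$ commute, as all multiplication operators do. Concretely, I would deduce the positivity of the scalar kernel $(1-a(x)\overline{a(y)}-b(x)\overline{b(y)})k(x,y)$ from the positivity of the $2\times2$ matrix kernel $(I_2-\theta(x)\theta(y)^*)k(x,y)$ (which is condition (ii) for a representing pair), using the rank-one structure $\theta(x)\theta(y)^*=a(x)\overline{a(y)}\begin{pmatrix}1\\h(x)\end{pmatrix}\begin{pmatrix}1&\overline{h(y)}\end{pmatrix}$; alternatively this should fall out of the ``more refined relationships'' between $\mathcal M(A),\mathcal H(B)$ and the families $\mathcal M(a),\mathcal H(b)$ promised after Proposition~\ref{prop:big-phi-little-phi-CNP}.

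Finally, suppose $a$ is nonvanishing. By the eigenfunction property of reproducing kernels for the bounded multiplier $a$, $M_a^*k_x=\overline{a(x)}\,k_x$ for every $x\in X$; since $a(x)\ne0$, this gives $k_x=\overline{a(x)}^{-1}M_a^*(k_x)\in\mr{Ran}(M_a^*)=\mathcal M(a^*)$. By the Proposition in Section~\ref{sec:density}, the linear span of $\{k_x:x\in X\}$ is dense in $\nbdom T^*=\mathcal H(B)$; as this span is contained in $\mathcal M(a^*)$, the $\mathcal H(B)$-closure of $\mathcal M(a^*)$ is all of $\mathcal H(B)$, which is the asserted density. (Combined with the first step, this also exhibits the subspace $\mathcal M(a^*)$ as a core for $T^*$.)
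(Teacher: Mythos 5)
Your first step (showing $M_a^*g\in\nbdom T^*$ with $T^*M_a^*g=M_b^*g$) is exactly the paper's argument, and your density argument is a correct alternative to the paper's: rather than showing directly that $f\perp_{\mathcal H(B)}\mathcal M(a^*)$ forces $af+bT^*f\equiv 0$ and hence $(I+TT^*)f=0$, you observe that $M_a^*k_x=\overline{a(x)}k_x$ puts every kernel function inside $\mathcal M(a^*)$ when $a$ is nonvanishing, and then invoke the already-proved density of $\operatorname{span}\{k_x\}$ in $\mathcal H(B)$. That is legitimate and arguably cleaner, since it reuses the earlier proposition instead of repeating its proof.

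The gap is exactly where you suspected it: the inequality $M_aM_a^*+M_bM_b^*\le I$. Your proposed derivation --- from the rank-one structure $\theta(x)=a(x)\bigl(\begin{smallmatrix}1\\ h(x)\end{smallmatrix}\bigr)$ together with commutativity of $M_a$ and $M_b$ --- does not work. Commutativity is no help: in $H^2_2$ the row $(z_1\ z_2)$ is contractive while the column $(z_1\ z_2)^t$ has norm $\sqrt2$, so column- and row-contractivity genuinely diverge for commuting multipliers of CNP spaces. Nor is the rank-one form a simplifying special case: \emph{every} $2\times1$ scalar column $(a\ b)^t$ with $a$ not identically zero has the form $a\cdot(1\ h)^t$ with $h=b/a$, so an elementary proof in this "special case" would be an elementary proof of the full $2\times1$ column-row property, which was open for general complete Pick spaces until Hartz's theorem. (The naive route via the positive matrix kernel $(I_2-\theta(x)\theta(y)^*)k(x,y)$ only yields positivity of its diagonal entries and hence of the trace, giving $M_aM_a^*+M_bM_b^*\le 2I$, i.e.\ row norm at most $\sqrt2$ --- enough for \emph{bounded} containment but not for the contractive containment the lemma asserts.) The paper closes this step by citing precisely that result of Hartz (every complete Pick space satisfies the column-row property), applied to the contractive column $\bigl(\begin{smallmatrix}b\\ a\end{smallmatrix}\bigr)$; your "alternative" fallback, the refined ball relations after Proposition~\ref{prop:big-phi-little-phi-CNP}, compares $\mathcal M(A)$ with the $\mathcal M(a)$ and $\mathcal H(B)$ with the $\mathcal H(b)$, not $\mathcal M(a)$ with $\mathcal H(b)$, so it does not supply this inequality either.
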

\begin{proof}
  Let $f\in \nbdom T$, let $a,b$ be a representing pair for $h$, and $g\in \nbker M_a^{*\bot}$. Then
  \begin{equation}
    |\langle Tf, M_a^*g\rangle| = |\langle M_aTf, g\rangle| = |\langle bf, g\rangle|\leq \|f\|_{\mathcal H}\|g\|_{\mathcal H} = \|f\|_{\mathcal H} \|M_a^*g\|_{\mathcal M(a^*)}
  \end{equation}
which shows that $M_a^*g\in \nbdom T ^*=\mathcal H(B)$. Moreover we see that since $ah=b$, we have $T^*M_a^*g=M_b^*g$. Finally, since the column $\begin{pmatrix} b\\ a\end{pmatrix}$ is assumed contractive, the main result of \cite{hartz-preprint} shows the row multiplier $\begin{pmatrix} b & a\end{pmatrix}$ is also contractive, so that
\begin{equation}
  \|M_a^*g\|^2_{\mathcal H(B)} = \|M_a^*g\|^2_{\mathcal H}+ \|M_b^*g\|^2_{\mathcal H} \leq \|g\|^2_{\mathcal H} =\|M_a^*g\|_{\mathcal M(a^*)}^2
\end{equation}
which shows that the inclusion is contractive. 

To see that $\mathcal M(a^*)$ is dense, suppose $f\in\mathcal H(B)$ and $f\bot M_a^*g$ for all $g\in \mathcal H$. This means that for all $g$, 
\begin{align}
0 &= \left\langle \begin{pmatrix} f \\ T^*f \end{pmatrix} , \begin{pmatrix} M_a^*g \\ M_b^*g\end{pmatrix}\right\rangle  \\
&= \langle f, M_a^*g\rangle + \langle T^*f, M_b^*g\rangle \\
&= \langle af+ bT^*f, g\rangle
\end{align}
and thus $af+bT^*f \equiv 0$. Since $a$ is assumed nonvanishing, we deduce that $f+hT^*f\equiv 0$. This implies that $T^*f\in \nbdom T $, so that $f\in \nbdom TT^*$. Hence $(I+TT^*)f=0$, which forces $f=0$ since $I+TT^*$ is bounded below. 
\end{proof}

{\bf Remark:} In spaces of holomorphic functions on a connected domain, the hypothesis ``$a$ is nonvanishing'' in the last part of the lemma can be weakened to ``$a$ is not identically $0$,'' since the ring of holomorphic functions on a connected domain has no zero divisors.

\subsection{Relation of $\mathcal M(A)$ to $\mathcal M(a)$ and $\mathcal H(B)$ to $\mathcal H(b)$} 
Fix a densely defined multiplier $h$. In a representing pair $a,b$ for $h$, we call $b$ a {\em numerator} and $a$ a {\em denominator}.

We saw in the last subsection that $\mathcal M(A)$ contains all the $\mathcal M(a)$ contractively, and that $\mathcal H(B)$ is contained contractively in all the $\mathcal H(b)$. The next two propositions sharpen these statements. We shall use $ball(\cdot)$ to denote the closed unit ball of a Banach space. 

\begin{prop} Let $h$ be a densely defined multiplier with Beurling pair $A,B$. We have
  \[
    \mathcal M(A)=\bigcup_{a} \mathcal M(a)
  \]
  where the union is taken over all denominators $a$. More precisely, $f\in \mathcal M(A)$ with $\|f\|_{\mathcal M(A)} \leq 1$ if and only if there exists a numerator $a$ so that $f\in \mathcal M(a)$ and $\|f\|_{\mathcal M(a)}\leq 1$. Thus
  \[
    ball(\mathcal M(A)) = \bigcup_a ball(\mathcal M(a))
  \]
  where the union is taken over all numerators $a$. 
\end{prop}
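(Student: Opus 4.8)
The plan is to prove the two inclusions making up the set equality $\mathrm{ball}(\mathcal{M}(A)) = \bigcup_a \mathrm{ball}(\mathcal{M}(a))$, where the union runs over all denominators $a$ in representing pairs for $h$.

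First I would dispose of the easy inclusion $\supseteq$. If $(a,b)$ is a representing pair, then by Proposition~\ref{prop:big-phi-little-phi-CNP} there is a contractive multiplier $\Phi \in \mr{Mult}(\mathcal{H}, \mathcal{H}\otimes\mathcal{E})$ with $a = A\Phi$, and the accompanying operator inequality $M_\theta M_\theta^* \leq M_\Theta M_\Theta^*$ restricts to $M_a M_a^* \leq M_A M_A^*$. This gives exactly the contractive containment $\mathcal{M}(a) \subseteq \mathcal{M}(A)$, so $\mathrm{ball}(\mathcal{M}(a)) \subseteq \mathrm{ball}(\mathcal{M}(A))$ for every denominator $a$, hence the same for the union.

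The substantive direction is $\subseteq$: given $f \in \mathcal{M}(A)$ with $\|f\|_{\mathcal{M}(A)} \leq 1$, I must produce a denominator $a$ and a bound $\|f\|_{\mathcal{M}(a)} \leq 1$. Recall $\mathcal{M}(A) = \nbdom T$ isometrically via $f \mapsto (f, Tf)$, so $\|f\|_{\mathcal{M}(A)}^2 = \|f\|^2 + \|Tf\|^2 \leq 1$. Since $\mathcal{M}(A) = \nbran M_A$, write $f = M_A F = AF$ for some $F \in \mathcal{H}\otimes\mathcal{E}$ with $\|F\| = \|f\|_{\mathcal{M}(A)} \leq 1$ (taking $F$ to be the minimal-norm preimage, i.e.\ $F \in (\ker M_A)^\perp$); then $Tf = BF$ as well, since $(AF, BF) \in G(T)$. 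Now the idea is to collapse the (generically infinite) vector $F$ into a \emph{scalar}. Pick any unit vector $e \in \mathcal{E}$ and set $a := A e$ and $b := B e$ — these are scalar contractive multipliers with $b = ah$ (as each coordinate pair $(a_k, b_k)$ of the Beurling pair satisfies $b_k = h a_k$, and this persists under taking any fixed combination $\langle \cdot, e\rangle$), so $(a,b)$ is a representing pair; but this naive choice need not satisfy $f \in \mathcal{M}(a)$ at all. The correct move is instead to use $F$ itself to define the denominator: view $F \in \mathcal{H}\otimes\mathcal{E}$ and form the scalar multiplier via a rank-one reduction. Concretely, I expect the right construction uses that $\mathcal{H}$ is a CNP space to find, by a Nevanlinna–Pick argument, a \emph{scalar} contractive multiplier $a$ together with a contractive multiplier $v \in \mr{Mult}(\mathcal{H}, \mathcal{H}\otimes\mathcal{E})$ (a "column of scalars") with $A v = a$ and such that $F = v g$ for some $g \in \mathcal{H}$ with $\|g\| \leq 1$; setting $b := Bv$ gives $b = ah$, $(a,b)$ a representing pair, and $f = AF = Avg = ag \in \mathcal{M}(a)$ with $\|f\|_{\mathcal{M}(a)} \leq \|g\| \leq 1$. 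The existence of such a factorization of $F$ through a scalar-valued multiplier is precisely where one invokes the complete Pick property: the kernel $(\langle F(x), F(y)\rangle - $ something$)k(x,y)$ must be checked positive so that the Ball–Trent–Vinnikov theorem applies.

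The main obstacle is exactly this factorization step — turning the vector-valued witness $F$ into a scalar denominator $a$ with the norm bound intact. The delicate point is that $F$ has no a priori multiplier structure; it is just an element of $\mathcal{H}\otimes\mathcal{E}$. One route around this: observe that $\|Tf\|^2 + \|f\|^2 \leq 1$ says the function $x \mapsto (f(x), (Tf)(x)) = (f(x), h(x)f(x))$ is controlled, and use the scalar Smirnov-type factorization of $\mathcal{H}$ (every $\mathcal{H}$-function is a quotient of multipliers, from the introduction) applied to $f$ or to a suitable auxiliary function, choosing the denominator there to also serve as the denominator of $h$ on the support of $f$. I would expect to have to argue that one can arrange a \emph{single} scalar outer-type denominator $a$ that simultaneously (i) makes $b = ah$ a bounded multiplier and (ii) makes $f/a$ lie in $\mathcal{H}$ with norm $\leq 1$, and the CNP interpolation theorem is the tool that packages these two positivity conditions into one $2\times 2$ (or $2\times\infty$) matrix Pick condition. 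Once the representing pair $(a,b)$ is in hand with $f = a g$, $g \in \mathrm{ball}(\mathcal{H})$, the identification $\|f\|_{\mathcal{M}(a)} = \|P_{(\ker M_a)^\perp} g\|_{\mathcal{H}} \leq \|g\| \leq 1$ finishes the proof, and the statement $\mathcal{M}(A) = \bigcup_a \mathcal{M}(a)$ (without the ball restriction) follows by homogeneity.
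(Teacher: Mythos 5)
Your strategy coincides with the paper's: the containment $\bigcup_a ball(\mathcal M(a))\subseteq ball(\mathcal M(A))$ follows from Proposition~\ref{prop:big-phi-little-phi-CNP}, and for the reverse inclusion one writes $f=AF$ with $\|F\|=\|f\|_{\mathcal M(A)}\le 1$ and then tries to ``collapse'' the vector-valued witness $F$ into a scalar by factoring $F=\Phi\tilde f$ with $\Phi$ a contractive column multiplier and $\|\tilde f\|\le\|F\|$, so that $a=A\Phi$, $b=B\Phi$ is a representing pair and $f=a\tilde f\in ball(\mathcal M(a))$. That endgame is correct. The genuine gap is precisely at the step you yourself flag as ``the main obstacle'': you assert that such a factorization of $F$ should exist ``by a Nevanlinna--Pick argument,'' but you never produce it, and it is not a routine application of the Ball--Trent--Vinnikov theorem. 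To invoke interpolation you would first need a positive kernel of the form $\bigl(\langle F(x),F(y)\rangle_{\mathcal E}-\tilde f(x)\tilde f(y)^*\bigr)k(x,y)$ for some $\tilde f$ with $\|\tilde f\|\le\|F\|$, and no such positivity is available a priori: as you note, $F$ is an arbitrary element of $\mathcal H\otimes\mathcal E$ with no multiplier structure, so there is nothing to feed into the Pick theorem. The paper closes this step by citing an external result, the factorization theorem of \cite[Theorem 1.1]{jury-martin-2019}, a vector-valued strengthening of the scalar Smirnov factorization quoted in the introduction, which supplies exactly the contractive column $\Phi$ and a cyclic $\tilde f$ with $\|\tilde f\|=\|F\|$. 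Without that (nontrivial) input, your proof does not go through.

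Your fallback suggestion --- applying the scalar Smirnov factorization to $f$ itself and hoping to arrange a single denominator that simultaneously tames $h$ and divides $f$ --- is also not carried out, and it is unclear how it would preserve the exact norm bound: the equality of unit balls requires $\|f\|_{\mathcal M(a)}\le\|f\|_{\mathcal M(A)}$ on the nose, whereas the scalar Smirnov factorization of $f$ gives no control of $\|f\|_{\mathcal M(a)}$ in terms of $\|f\|_{\mathcal M(A)}$ and no reason for the resulting denominator to make $ah$ a bounded multiplier. In short: right architecture, same as the paper's, but the load-bearing lemma (norm-preserving factorization of vector-valued $\mathcal H$-functions through a contractive column multiplier) is missing rather than proved or correctly located.
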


\begin{proof} It suffices to prove the second statement, about the unit balls. We have already seen that each $\mathcal M(a)$ is contained contractively in $\mathcal M(A)$, which means precisely that $ball(\mathcal M(a))$ is contained in $ball(\mathcal M(A))$, for all denominators $a$.  For the reverse inclusion, suppose $f\in ball(\mathcal M(A))$. This means that there is a function $F\in \mathcal H\otimes \mathcal E$ with $\|F\|\leq 1$ so that $f=AF$. Now, by \cite[Theorem 1.1]{jury-martin-2019}, there exists a contractive multiplier $\Phi\in (\mr{Mult} \, \mathcal H, \mathcal H\otimes \mathcal E)$ and a cyclic vector $\widetilde f\in \mathcal H$, with $\|\widetilde f\|=\|f\|\leq 1$ so that $F=\Phi \widetilde f$. Putting $a=A\Phi$, we have by Proposition (\ref{prop:big-phi-little-phi-CNP}) that $a$ is a denominator for $h$ and $f=a\widetilde f$. Since $\|\widetilde f\|\leq 1$, we conclude that $f\in ball(\mathcal M(a))$. 
\end{proof}
\begin{prop} Let $f\in \mathcal H$. Then $f\in \mathcal H(B)$ if and only if
\begin{itemize}
\item[i)] $f\in \mathcal H(b)$ for every numerator $b$, and
\item[ii)] $\sup_b \|f\|_{\mathcal H(b)} <\infty.$
\end{itemize}
where the supremum in (ii) is taken over all numerators $b$ for $h$. When this happens, the supremum in (ii) is in fact equal to $\|f\|_{\mathcal H(B)}$.

In other words,
\begin{equation}\label{eqn:intersection-of-balls}
  ball(\mathcal H(B)) = \bigcap_b ball(\mathcal H(b))
\end{equation}
where the intersection is taken over all numerators $b$.
\end{prop}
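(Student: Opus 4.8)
The plan is to prove the characterization via the two containments encoded in $\textbf{(i)}$ and $\textbf{(ii)}$, using the fact that $\mathcal H(B) \subset \mathcal H(b)$ contractively for every numerator $b$ (Proposition~\ref{prop:big-phi-little-phi-CNP}), and that the kernels $k_x$ span a dense subspace of $\mathcal H(B)$ (the Proposition in Section~\ref{sec:density}). One direction is nearly free: if $f \in \mathcal H(B)$, then the contractive containment $\mathcal H(B) \subset \mathcal H(b)$ gives immediately that $f \in \mathcal H(b)$ with $\|f\|_{\mathcal H(b)} \leq \|f\|_{\mathcal H(B)}$ for every numerator $b$, so both $\textbf{(i)}$ and $\textbf{(ii)}$ hold and $\sup_b \|f\|_{\mathcal H(b)} \leq \|f\|_{\mathcal H(B)}$.

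The substance is the converse together with the reverse inequality $\|f\|_{\mathcal H(B)} \leq \sup_b \|f\|_{\mathcal H(b)}$. First I would compute, for a fixed numerator $b$ with denominator $a$, the inner product $\langle f, k_x \rangle_{\mathcal H(b)}$ for $f \in \mathcal H(b)$. Using the realization of $\mathcal H(b)$ as the range of $(I - M_b M_b^*)^{1/2}$ re-normed (Sarason \cite{sarason-1994}), one gets the standard formula $\langle f, k_x\rangle_{\mathcal H(b)} = f(x) + b(x) (X^* f)(x)$ where $X$ is the contraction with $\mathcal H(b) = \operatorname{ran}(I - XX^*)^{1/2}$; more usefully, the eigenfunction-type identity from Lemma~\ref{lem:DA-kernel-formulas} (with $b$ bounded) reads $\langle f, k_x\rangle_{\mathcal H(b)} = f(x) + b(x)\overline{b(x)}\cdots$ — I would instead mimic the bounded-multiplier version of that lemma directly in $\mathcal H(b)$. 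The point is that for the finite linear combinations $g = \sum c_i k_{x_i}$, the quantity $\langle f, g\rangle_{\mathcal H(b)}$ can be expressed through $f$, $h$, and data not depending on the particular $(a,b)$ in a controlled way, because $h = b/a$ is fixed.

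The cleaner route, which I would actually pursue: suppose $f \in \mathcal H$ satisfies $\textbf{(i)}$ and $\textbf{(ii)}$ with $M := \sup_b \|f\|_{\mathcal H(b)}$. For each numerator $b_j$ in the Beurling pair (with denominator $a_j$), $f \in \mathcal H(b_j)$ means $M_{b_j}^* f \in \operatorname{ran} M_{a_j}^*$, say $M_{b_j}^* f = M_{a_j}^* g_j$; moreover the $\mathcal H(b_j)$-norm controls $\|f\|^2 + \|g_j'\|^2$ where $g_j'$ is the minimal-norm solution (the one in $\ker(M_{a_j}^*)^\perp$). Using Lemma~\ref{lem:b*f=a*g-CNP}, $f \in \nbdom T^*$ iff there is a \emph{single} $g$ with $M_b^* f = M_a^* g$ for \emph{every} representing pair simultaneously, and then $g = T^* f$. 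So the task reduces to patching the family $\{g_j'\}$ into one common $g$: I would show the minimal solutions $g_j'$ are all equal to a common $g \in \mathcal H$ by exploiting that the representing pairs arise from $\Phi$'s as in Proposition~\ref{prop:big-phi-little-phi-CNP} — i.e., $M_b^* f = \Phi^* M_B^* f$ and $M_a^* f$-type relations thread through the fixed row $M_B^*, M_A^*$. Concretely, $M_{b_j}^* f = M_B^* f$ restricted to the $j$-th coordinate won't literally hold, but the relation $b_j = B \Phi_j$ lets one write $M_{b_j}^* = M_{\Phi_j}^* M_B^*$, and the uniform bound $\textbf{(ii)}$ forces $M_B^* f \in \operatorname{ran} M_A^*$ with the norm estimate $\|f\|^2 + \|g\|^2 \leq M^2$, which is exactly $f \in \mathcal H(B)$ with $\|f\|_{\mathcal H(B)} \leq M$.

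The main obstacle I anticipate is the patching/uniformity step: extracting from the hypotheses "$f \in \mathcal H(b)$ for all $b$ with uniformly bounded norm" the conclusion that the \emph{row} relation $M_B^* f = M_A^* g$ holds with a single $g$ obeying $\|f\|^2 + \|g\|^2 \leq M^2$. The difficulty is that each $\mathcal H(b_j)$ only sees one coordinate of the infinite row $A, B$, so one must argue that the best constants assemble correctly — this is where one needs that every representing pair is subordinate to the Beurling pair via a contractive $\Phi$ (Proposition~\ref{prop:big-phi-little-phi-CNP}), and possibly that among the denominators one may take $a = A\Phi$ for arbitrary contractive $\Phi$, so that the family of $\mathcal H(b)$-norms "detects" the full $\mathcal H(B)$-norm. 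Once the uniform row estimate is in hand, identifying it with membership in $\mathcal H(B)$ and the equality of norms is routine, and $\textbf{(\ref{eqn:intersection-of-balls})}$ follows by unwinding the definitions of the unit balls.
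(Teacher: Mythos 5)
Your forward direction is fine and is exactly the paper's: the contractive containment $\mathcal H(B)\subset\mathcal H(b)$ from Proposition~\ref{prop:big-phi-little-phi-CNP} gives (i), (ii) and $\sup_b\|f\|_{\mathcal H(b)}\leq\|f\|_{\mathcal H(B)}$. The converse, however, has a genuine gap, and it sits precisely where you flag ``the main obstacle.'' Two specific problems. First, your reduction starts from the claim that $f\in\mathcal H(b_j)$ means $M_{b_j}^*f\in\nbran M_{a_j}^*$. That is not the definition of $\mathcal H(b_j)$ (which is $\nbran (I-M_{b_j}M_{b_j}^*)^{1/2}$ and depends only on $b_j$), and the classical equivalence you are importing from the disk relies on the Pythagorean identity $|a|^2+|b|^2=1$, which an individual contractive column $(a_j,b_j)$ with $b_j=a_jh$ need not satisfy here. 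Second, and more seriously, the patching step --- passing from the coordinatewise (or pairwise) information to the single row relation $M_B^*f=M_A^*g$ with $\|f\|^2+\|g\|^2\leq M^2$ --- is asserted but never argued. Since that relation \emph{is} the statement $f\in\mathcal H(B)$ with $\|f\|_{\mathcal H(B)}\leq M$, writing ``the uniform bound (ii) forces'' it is circular: the whole content of the proposition is to justify exactly that implication, and nothing in your sketch explains how the uniform bound over the (much larger) family of all numerators $b=B\Phi$ gets used.

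The paper's proof avoids the patching problem entirely by working with the deBranges--Rovnyak variational formula $\|f\|^2_{\mathcal H(B)}=\sup_{g\in\mathcal M(B)}\{\|f+g\|^2_{\mathcal H}-\|g\|^2_{\mathcal M(B)}\}$. Given $g=BG\in\mathcal M(B)$, the factorization theorem of \cite[Theorem 1.1]{jury-martin-2019} writes $G=\Phi\widetilde g$ with $\Phi$ a contractive column multiplier and $\|\widetilde g\|_{\mathcal H}=\|G\|_{\mathcal H}$; setting $b=B\Phi$ produces a numerator for which $\|f+g\|^2-\|g\|^2_{\mathcal M(B)}=\|f+b\widetilde g\|^2-\|\widetilde g\|^2\leq\|f\|^2_{\mathcal H(b)}$, and taking the supremum over $g$ finishes the argument. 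That factorization is the ingredient your sketch is missing: it is what lets a single numerator ``see'' each competitor $g$ in the $\mathcal H(B)$ supremum, which is exactly the detection property you were hoping the family of $\mathcal H(b)$-norms would have. If you want to salvage your route via Lemma~\ref{lem:b*f=a*g-CNP}, you would still need some such tool to convert the uniform scalar bounds into the vector-valued identity $M_B^*f=M_A^*g$; as written, the proposal does not contain a proof.
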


\begin{proof}
From the deBranges-Rovnyak picture of the $\mathcal H(B)$ norms, we have by definition for any $f\in \mathcal H$
\begin{equation}\label{eqn:H(B)-dbr-norm-def}
\|f\|^2_{\mathcal H(B)} =\sup_{g\in\mathcal M(B)} \left\{ \|f+g\|^2_{\mathcal H} -\|g\|^2_{\mathcal M(B)} \right\}
\end{equation}
and
\begin{equation}
\|f\|^2_{\mathcal H(b)}=\sup_{g\in\mathcal M(b)} \left\{ \|f+g\|^2_{\mathcal H} -\|g\|^2_{\mathcal M(b)}\right\}
\end{equation}
with the understanding that $f$ belongs to the space in question if and only if the supremum on the right hand side is finite. 

We have already shown that $\mathcal H(B)$ is contained contractively in $\mathcal H(b)$ for every numerator $b$. Conversely, fix a function $f$ and suppose (i) and (ii) hold. We must show that the supremum on the right hand side of (\ref{eqn:H(B)-dbr-norm-def}) is finite and equal to the supremum in (ii). To do this, fix a function $g\in\mathcal M(B)$; then by the definition of the $\mathcal M(B)$ space there is a function $G\in \mathcal H\otimes\mathcal E$ 
such that $g=BG$ and $\|g\|^2_{\mathcal M(B)}$. Again by the factorization theorem for sequences of $\mathcal H$ functions \cite[Theorem 1.1]{jury-martin-2019}, there is a cyclic vector $\widetilde{g}\in \mathcal H$ with $\|\widetilde{g}\|_{\mathcal H} = \|G\|_{\mathcal H}$ and a contractive column multiplier $\Phi$ so that $G =\Phi \widetilde g$. Put $b=B\Phi$, then $b$ is a numerator for $h$, $g=b\widetilde{g}$, and
\begin{align}
  \|f+g\|^2_{\mathcal H} -\|g\|^2_{\mathcal M(B)} &=  \|f+BG\|^2_{\mathcal H} -\|BG\|^2_{\mathcal M(B)} \\
  &= \|f+BG\|^2_{\mathcal H} -\|G\|^2_{\mathcal H} \\
  &= \|f+b\widetilde{g}\|^2_{\mathcal H} -\|\widetilde{g}\|_{\mathcal H} \\
  &\leq \|f\|_{\mathcal H(b)}^2.
\end{align}
In other words, for every $g\in \mathcal M(B)$ there exists a numerator $b$ for $h$ such that
\begin{equation}
  \|f+g\|^2_{\mathcal H} -\|g\|^2_{\mathcal M(B)} \leq  \|f\|_{\mathcal H(b)}^2.
\end{equation}
It follows that $\|f\|_{\mathcal H(B)} \leq \sup_b \|f\|_{\mathcal H(b)}$ as desired. 
\end{proof}

{\bf Remark:} Item (i) asserts the containment $\mathcal H(B)\subset \bigcap_b \mathcal H(b)$. We do not know if equality holds in general, this would be equivalent to removing the uniformity condition (ii).

\section{Similarity of restrictions, and corona pairs}\label{sec:corona}
Sarason \cite{sarason-1986b} proved, among other things, that if $b$ is not an extreme point of $ball(H^\infty)$ and $a$ is the outer function satisfying $|a|^2+|b|^2=1$ on the circle, then the backward shift $S^*$ restricted to $\mathcal H(b)$ is similar to the usual backward shift on $H^2$ if and only if $a,b$ is a corona pair. We can prove an analog of this result in CNP spaces, if we replace $\mathcal H(b)$ with $\nbdom T=\mathcal H(B)$. 

\begin{defn} We say that a pair of multipliers $a,b\in \mr{Mult} \, \cH$ is a {\em corona pair} if there exist multipliers $u,v\in \mr{Mult} \, \cH$ so that
  \begin{equation}\label{eqn:corona}
    au+bv\equiv 1.
  \end{equation}
\end{defn}
This is equivalent to saying that the row $(u\ v)$ is a left inverse to the column $\displaystyle{\begin{pmatrix} a\\ b\end{pmatrix}}$ (or that the column $\displaystyle{\begin{pmatrix} u\\ v\end{pmatrix}}$ is a right inverse to the row $(a\ b)$). 

\begin{thm}\label{thm:similar-to-S*} Let $h$ be a densely defined multiplier of the CNP space $\mathcal H$, with Beurling pair $(A, B)$.  The following are equivalent:
  \begin{itemize}
  \item[1)] There is an invertible map $W:\mathcal H\to \mathcal H(B)$ such that for every $\varphi\in \mr{Mult} \, \cH$, we have
    \begin{equation}\label{eqn:similarity-of-restriction}
       M_\varphi^*|_{\mathcal H(B)}  = WM_\varphi^*W^{-1}  
    \end{equation}
    
\item[2)] There exists a representing pair $a,b$ for $h$ such that $a$ is outer and $\mathcal H(B)=\mathcal M(a^*)$. 
\item[3)] There exists a representing pair $a,b$ for $h$ such that $a$ is outer and $a,b$ is a corona pair. 
  \end{itemize}
\end{thm}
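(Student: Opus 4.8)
The plan is to prove the cyclic chain of implications $1)\Rightarrow 3)\Rightarrow 2)\Rightarrow 1)$, using the deBranges--Rovnyak machinery together with the representing-pair parameterization already established in Proposition~\ref{prop:big-phi-little-phi-CNP} and Lemmas~\ref{lem:containment-Ma*} and \ref{lem:b*f=a*g-CNP}. The CNP analogue of Sarason's argument should go through once we translate each of the disk-theoretic notions (outer function, corona pair, similarity to the backward shift) into the language of multipliers and their Beurling pairs.

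For $2)\Rightarrow 1)$: given a representing pair $(a,b)$ with $a$ outer and $\mathcal H(B)=\mathcal M(a^*)$, I would take $W = M_a^*$, viewed as a map $\mathcal H\to \mathcal M(a^*)$. Since $a$ is outer (cyclic), $M_a$ has dense range, so $M_a^*$ is injective with dense range, and by the definition of the $\mathcal M(a^*)$ norm it is a coisometry, hence (being injective) unitary from $\mathcal H$ onto $\mathcal M(a^*)=\mathcal H(B)$; in particular $W$ is invertible. The intertwining relation \eqref{eqn:similarity-of-restriction} is then the identity $M_\varphi^* M_a^* = M_a^* M_\varphi^*$ on $\mathcal H$, which holds because $a$ and $\varphi$ are both multipliers and $\mr{Mult}\,\cH$ is commutative; one only needs to check that the left-hand side really is the restriction of $M_\varphi^*$ to $\mathcal H(B)$, which follows since $M_\varphi^*$ preserves $\nbdom T^*=\mathcal H(B)$ (noted at the start of Section~\ref{sec:kernels}) and agrees there with the ambient $M_\varphi^*$.

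For $3)\Rightarrow 2)$: suppose $(a,b)$ is a representing pair with $a$ outer and $au+bv\equiv 1$ for multipliers $u,v$. By Lemma~\ref{lem:containment-Ma*}, $\mathcal M(a^*)\subset\mathcal H(B)$ contractively and densely (density uses that $a$ is outer, hence nonvanishing up to the holomorphic-domain caveat in the remark, or more robustly just cyclic). It remains to upgrade this to equality \emph{with equal norms}. Here the corona condition does the work: writing $1 = M_u M_a + M_v M_b = M_u M_a + M_v M_h M_a$ on $\nbdom T$ and dualizing, one expresses the identity on $\mathcal H(B)$ as $f = M_a^*(M_u^* f) + M_b^*(M_v^* f)$ via Lemma~\ref{lem:b*f=a*g-CNP} (with $g = T^* f$), exhibiting every $f\in\mathcal H(B)$ as lying in $\mathcal M(a^*)$ with controlled norm; combined with the contractive inclusion this should force $\mathcal M(a^*)=\mathcal H(B)$ isometrically.

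For $1)\Rightarrow 3)$: this is the substantive direction and where I expect the main obstacle. Given the similarity $W$, the idea (following Sarason) is that $W$ conjugates the whole $\mr{Mult}\,\cH$-module structure, so $\mathcal H(B)$ is a ``free rank-one'' module in a suitable sense; applying the factorization theorem \cite[Theorem 1.1]{jury-martin-2019} to the vector $W1$ (or to an appropriate preimage) one extracts a cyclic vector and a contractive column multiplier $\Phi$, sets $a = A\Phi$, $b = B\Phi$ to get a representing pair by Proposition~\ref{prop:big-phi-little-phi-CNP}, and then shows $a$ is outer and $(a,b)$ is corona precisely because $W$ is \emph{bounded below}. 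The delicate point is quantitative: boundedness below of $W$ must be converted into the existence of the multiplier left-inverse $(u\ v)$, which I anticipate will require a Nevanlinna--Pick / commutant-lifting argument in the CNP space — showing that the lower bound on $W$ yields positivity of the kernel certifying that $\begin{pmatrix} a & b\end{pmatrix}$ has a contractive multiplier right-inverse after suitable rescaling. Controlling the interplay between the outer (cyclic) requirement on $a$ and the corona estimate, in the absence of inner--outer factorization in general CNP spaces, is the crux, and I would lean on the results of \cite{AHMR-2017} or \cite{jury-martin-2019} on cyclicity and factorization to navigate it.
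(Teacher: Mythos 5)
Your implications $2)\Rightarrow 1)$ and $3)\Rightarrow 2)$ are essentially the paper's arguments and are sound (one small over-claim: in $3)\Rightarrow 2)$ the corona identity $f=M_a^*(M_u^*f+M_v^*g)$ only yields the set equality $\mathcal M(a^*)=\mathcal H(B)$, with comparable norms via the closed graph theorem, not an isometric identification --- but set equality is all that condition (2) asserts). The genuine gap is in your $1)\Rightarrow 3)$. The key observation you are missing is that condition (1) already forces $W$ to be the adjoint of a multiplier: regarding $\mathcal H(B)$ as contractively contained in $\mathcal H$, the relation (\ref{eqn:similarity-of-restriction}) says that $W$, viewed as a bounded operator on $\mathcal H$, commutes with every $M_\varphi^*$, and since in a CNP space the commutant of $\{M_\varphi^*:\varphi\in\mr{Mult}\,\cH\}$ is again $\{M_a^*:a\in\mr{Mult}\,\cH\}$, one gets $W=M_a^*$ outright. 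Invertibility of $W$ then gives injectivity of $M_a^*$ (so $a$ is outer) and $\mathcal H(B)=\nbran M_a^*=\mathcal M(a^*)$, i.e.\ condition (2); the only remaining check is that $b:=ah$ is a bounded multiplier, which follows from the estimate $|\langle Tf,M_a^*g\rangle|=|\langle ahf,g\rangle|\leq\|W\|\,\|f\|_{\cH}\,\|g\|_{\cH}$. Your proposed route --- factoring $W1$ via \cite[Theorem 1.1]{jury-martin-2019} and then converting boundedness below of $W$ into a corona solution for a pair you have not yet identified --- bypasses this identification and leaves the entire quantitative heart of the argument (which you yourself flag as requiring ``a Nevanlinna--Pick / commutant-lifting argument'') unproved; as written it is a plan, not a proof.

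Once (2) is in hand, the passage to (3) is where the bounded-below/Pick argument actually lives, and it is cleaner than what you sketch: if $\mathcal M(a^*)=\mathcal H(B)$, then the graph of the closed operator $T^*$ equals the range of the column $\begin{pmatrix} M_a^* \\ M_b^*\end{pmatrix}$, so that column has closed range; it is injective because $a$ is outer, hence bounded below by some $c>0$. This yields $M_aM_a^*+M_bM_b^*\geq c^2$, i.e.\ positivity of the kernel $(a(x)a(y)^*+b(x)b(y)^*-c^2)k(x,y)$, and the Nevanlinna--Pick theorem for CNP spaces then produces multipliers $u,v$ with $au+bv=1$. I would restructure your proof as $(1)\Leftrightarrow(2)$ and $(2)\Leftrightarrow(3)$, which is how the paper proceeds and which avoids the direct $1)\Rightarrow 3)$ leap entirely.
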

\begin{proof}
  (1) implies (2): Suppose $W:\mathcal H\to \mathcal H(B)$ is invertible and (\ref{eqn:similarity-of-restriction}) holds for every $\varphi\in \mr{Mult} \, \cH$. Regarding $\mathcal H(B)$ as contractively included in $\mathcal H$, we may identify $W$ with a bounded operator commuting with all the $M_\varphi^*$ (over all multipliers $\varphi\in \mr{Mult} \, \cH$), and hence $W=M_a^*$ for some $a\in \mr{Mult} \, \cH$. Since $W$ is invertible, $M_a^*$ must be injective and hence $a$ is outer, and $\mathcal H(B)=\mathcal M(a^*)$ (with comparable norms, via the closed graph theorem).  To see that $a$ is part of a representing pair for $h$, it remains to show that $b:=ah$ is a (bounded) multiplier.  Since $W=M_a^*$ is bounded from $\mathcal H$ to $\mathcal H(B)=\nbdom T ^*$, we have for all $f\in \nbdom T$ and all $g\in \cH$
  \begin{equation}\label{eqn:ah-bounded-1}
    |\langle T f, M_a^*g\rangle| \leq \|f\|_{\cH}\|M_a^*g\|_{\nbdom T ^*} \leq \|W\|\|f\|_{\cH}\|g\|_{\cH}.
  \end{equation}
On the other hand
\begin{equation}\label{eqn:ah-bounded-2}
  \langle T f, M_a^*g\rangle =\langle ahf, g\rangle
\end{equation}
which combined with (\ref{eqn:ah-bounded-1}) shows that the map $f\to ahf$ is bounded on $\mathcal H$, that is, the multiplier $b=ah$ is bounded. Thus, $a,b$ will be a representing pair (after scaling to make it contractive). 

(2) implies (1): If $a$ is an outer multiplier and $\mathcal H(B)=\mathcal M(a^*)$, then $M_a^*$ implements a unitary equivalence between $M_\varphi^*$ (acting in $\mathcal H$) and $M_\varphi^*$ (acting in $\mathcal M(a^*)$). Since whenever $\mathcal H(B)=\mathcal M(a^*)$ holds, it holds with the two norms being comparable, we conclude that $W=M_a^*$ is a similarity and that (\ref{eqn:similarity-of-restriction}) holds. 

(2) implies (3): If we suppose that $\mathcal M(a^*)=\mathcal H(B)$, then the graph of $T^*$ is equal to the range of the column 
\begin{equation} 
M_\psi^* := \begin{pmatrix} M_a^* \\ M_b^* \end{pmatrix}.
\end{equation}
Since $T^*$ is a closed operator,  this says that the column $M_\psi^*$ has closed range. Since $a$ is assumed outer, $M_\psi^*$ is also injective, and hence bounded below. So, there is a constant $c$ so that $M_\psi M_\psi^*-c^2\geq 0$, which means that the kernel
\begin{equation}
(a(x)a(y)^* +b(x)b(y)^* -c^2)k(x,y)
\end{equation}
is positive. By the Pick interpolation theorem, there then exist multipliers $u,v$ so that
\begin{equation}
\begin{pmatrix} M_u^* & M_v^* \end{pmatrix} \begin{pmatrix} M_a^* \\ M_b^*\end{pmatrix} =c
\end{equation}
so on replacing $u,v$ by $u/c, v/c$ we have
\begin{equation} 
ua+vb=1,
\end{equation}
and thus $a,b$ is a corona pair.

(3) implies (2): Fix a corona pair $a,b$ representing $h$, with $a$ outer. From Lemma~\ref{lem:containment-Ma*}, we have $\mathcal M(a^*)\subset \mathcal H(B)$ always, so it remains to prove the reverse inclusion. Let $f\in \mathcal H(B)$, then by Lemma~\ref{lem:b*f=a*g-CNP}, there exists a $g$ such that $M_b^*f=M_a^*g$. Let $u,v$ be the multipliers solving the corona problem (\ref{eqn:corona}) for the pair $a,b$. Then
  \begin{equation}
    f= M_u^*M_a^*f + M_v^*M_b^*f =M_u^*M_a^* f+ M_v^*M_a^*g = M_a^*(M_u^*f+M_v^*g),  
  \end{equation}
so $f\in \mathcal M(a^*)$. 
\end{proof}

{\bf Remark:} In Sarason's theorem in the disk case, there is a fourth equivalence, namely that $b$ is a multiplier of $\mathcal H(b)$. The natural analog in the present context would be:

\vskip.1in

{\em 4) There exists a representing pair} $a,b$ {\em with} $b$ {\em a multiplier of} $\mathcal H(B)$.   

\vskip.1in

We do not know if this item (4) is equivalent to any of (1)--(3). However it is easy to prove the following weaker claim: 
\begin{prop}
If $a,b$ is a representing pair for $h$ which satisfies (any of) the conditions of Theorem~\ref{thm:similar-to-S*}, then $b$ multiplies $\mathcal H(B)$ into $\mathcal H(b)$. 
\end{prop}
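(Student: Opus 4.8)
The plan is to reduce the statement to an elementary operator inequality. First I would invoke Theorem~\ref{thm:similar-to-S*} to pass to the equivalent hypothesis (2): the representing pair $a,b$ may be taken with $a$ outer and $\mathcal H(B)=\mathcal M(a^*)$. Since $\mathcal M(a^*)$ is by definition the range of $M_a^*$ with the norm making $M_a^*$ a coisometry onto it, and since the inclusion $\mathcal M(a^*)\hookrightarrow\mathcal H(B)$ is contractive (Lemma~\ref{lem:containment-Ma*}), the closed graph theorem produces a constant $C$ with $\|f\|_{\mathcal M(a^*)}\le C\|f\|_{\mathcal H(B)}$ for all $f$. Hence every $f\in\mathcal H(B)$ can be written $f=M_a^*g$ with $\|g\|_{\mathcal H}\le C\|f\|_{\mathcal H(B)}$, and then $bf=M_bf=M_bM_a^*g$. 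So it is enough to show that $M_bM_a^*$ maps $\mathcal H$ boundedly into $\mathcal H(b)$.

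For this, recall that $\mathcal H(b)$ is the range of $(I-M_bM_b^*)^{1/2}$ with the coisometric norm. By Douglas' factorization lemma, to conclude that $M_bM_a^*$ carries $\mathcal H$ contractively into $\mathcal H(b)$ it suffices to establish
\[
  (M_bM_a^*)(M_bM_a^*)^*=M_bM_a^*M_aM_b^*\ \le\ I-M_bM_b^*.
\]
This is the one genuine step, and it is short: condition (ii) in the definition of a representing pair says the column $\binom{a}{b}$ is a contractive multiplier, i.e.\ $M_a^*M_a+M_b^*M_b\le I$, so $M_a^*M_a\le I-M_b^*M_b$; conjugating by $M_b$ gives
\[
  M_bM_a^*M_aM_b^*\ \le\ M_b(I-M_b^*M_b)M_b^*=M_bM_b^*-(M_bM_b^*)^2=X(I-X),
\]
where $X:=M_bM_b^*$ is a positive contraction, and finally $X(I-X)\le I-X$ because $0\le X\le I$. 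Chaining these inequalities yields the claim, so Douglas' lemma produces a contraction $W$ on $\mathcal H$ with $M_bM_a^*=(I-M_bM_b^*)^{1/2}W$; thus $M_bM_a^*g\in\mathcal H(b)$ with $\|M_bM_a^*g\|_{\mathcal H(b)}\le\|Wg\|_{\mathcal H}\le\|g\|_{\mathcal H}$.

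Combining the two steps, for $f\in\mathcal H(B)$ written as $f=M_a^*g$ with $\|g\|\le C\|f\|_{\mathcal H(B)}$ we obtain $bf=M_bM_a^*g\in\mathcal H(b)$ with $\|bf\|_{\mathcal H(b)}\le C\|f\|_{\mathcal H(B)}$, which is exactly the asserted boundedness of $b$ as a multiplier from $\mathcal H(B)$ into $\mathcal H(b)$. I do not expect a genuine obstacle here — as the paper indicates this is the easy claim — and the only substantive ingredient is the displayed operator inequality. One could equally well run the argument from condition (3): the corona identity $f=M_u^*M_a^*f+M_v^*M_b^*f$ together with $M_b^*f=M_a^*(T^*f)$ (Lemma~\ref{lem:b*f=a*g-CNP}) gives the factorization $f=M_a^*(M_u^*f+M_v^*T^*f)$ directly, with the explicit multiplier bound $C=\|M_u\|+\|M_v\|$ in place of the closed graph constant.
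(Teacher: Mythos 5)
Your proof is correct and follows essentially the same route as the paper's: the whole content is the operator inequality $M_bM_a^*M_aM_b^*\leq M_b(I-M_b^*M_b)M_b^*\leq I-M_bM_b^*$ coming from contractivity of the column $\begin{pmatrix} a\\ b\end{pmatrix}$, which shows $b$ multiplies $\mathcal M(a^*)$ into $\mathcal H(b)$, combined with the identification $\mathcal H(B)=\mathcal M(a^*)$ from condition (2) of Theorem~\ref{thm:similar-to-S*}. Your extra bookkeeping (Douglas' lemma, the closed graph constant, and the alternative corona-identity factorization) is all sound but not a different argument.
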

\begin{proof}
Since the column $(b \ a)^t$ is contractive, we have the operator inequality
\begin{equation}
M_a^*M_a\leq I-M_b^*M_b
\end{equation}
and therefore
\begin{align}
M_b M_a^*M_aM_b^* &\leq M_b(I-M_b^*M_b)M_b^* \\
&\leq I-M_bM_b^*
\end{align}
which means that $b$ multiplies $\mathcal M(a^*)$ into $\mathcal H(b)$ (regardless of any other assumptions on $a,b$). Under the conditions of Theorem~\ref{thm:similar-to-S*}, $\mathcal H(B)=\mathcal M(a^*)$, which proves the proposition. 
\end{proof}

\section{Invariant subspaces}\label{sec:invariant-subspaces}

Sarason \cite{sarason-1986b,sarason-1986a} studied backward-shift invariant subspaces of the non-extreme $\mathcal H(b)$ spaces. In our more general setting of CNP spaces, replacing $\mathcal H(b)$ with $\nbdom T^*$, the appropriate analog will be subspaces which are invariant for all the adjoint multipliers $M_\varphi^*$ as $\varphi$ ranges over $\mr{Mult} \, \cH$.  
\begin{defn}Say that a closed linear subspace $\mathcal J\subset \mathcal H(B)$ is {\em $*$-invariant} if
  \[
    M_\varphi^* \mathcal J \subset \mathcal J
  \]
  for all multipliers $\varphi\in \mr{Mult} \, \cH$ (here $M_\varphi^*$ is understood as an operator acting from $\mathcal H$ to $\mathcal H$). 
\end{defn}

We have already seen that each $\mathcal H(B)$ itself is invariant for all the $M_\varphi^*$. One way to generate examples of $*$-invariant subspaces of $\mathcal H(B)$ is to take a closed subspace $\mathcal K\subset \mathcal H$ which is invariant for all the $M_\varphi^*$ and form its intersection $\mathcal J=\mathcal K\cap \mathcal H(B)$; since the inclusion $\mathcal H(B)\subset \mathcal H$ is continuous, this intersection will automatically be closed in the $\mathcal H(B)$ norm. Our main theorem, echoing Sarason's, is that every $*$-invariant subspace arises in this way. First we need a lemma.

\begin{lemma}\label{lem:invariant-subspace} Suppose $h$ is a densely defined multiplier with Beurling pair $A,B$. Let $\mathcal J\subset \mathcal H(B)$ be a closed (in the $\mathcal H(B)$ norm) $*$-invariant subspace.  Then
\begin{equation}
\text{span} \{M_a^*\mathcal J: a \text{ is a denominator for } h\}
\end{equation}
is a dense subspace of $\mathcal J$.
\end{lemma}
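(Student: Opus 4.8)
The plan is to show that the $\mathcal{H}(B)$-closure of $\mathrm{span}\{M_a^*\mathcal{J}: a \text{ a denominator}\}$ contains $\mathcal{J}$; since each $M_a^*$ carries $\mathcal{H}(B)$ into $\mathcal{H}(B)$ (because $\mathcal{H}(B)=\nbdom T^*$ is $M_a^*$-invariant for every multiplier $a$, in particular for every denominator), and since $\mathcal{J}$ is $*$-invariant and closed, the span in question is actually contained in $\mathcal{J}$, so equality of closures follows. The key point is therefore the reverse containment, which I would obtain by a duality/orthogonality argument: suppose $f\in\mathcal{J}$ is orthogonal in $\mathcal{H}(B)$ to $M_a^*\mathcal{J}$ for every denominator $a$; I want to conclude $f=0$, or at least $f\perp\mathcal{J}$.

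First I would unwind the orthogonality using the deBranges--Rovnyak identification of $\mathcal{H}(B)$ with the graph of $T^*$, exactly as in the proof of Lemma~\ref{lem:containment-Ma*}. For a denominator $a$ (part of a representing pair $(a,b)$ with $b=ah$), Lemma~\ref{lem:containment-Ma*} tells us $M_a^*g\in\mathcal{H}(B)$ with $T^*(M_a^*g)=M_b^*g$ whenever $g\in\overline{\ran M_a}$. Then, for $f\in\mathcal{H}(B)$ and $j\in\mathcal{J}$,
\[
\langle f, M_a^*j\rangle_{\mathcal{H}(B)} = \left\langle \begin{pmatrix} f\\ T^*f\end{pmatrix}, \begin{pmatrix} M_a^*j\\ M_b^*j\end{pmatrix}\right\rangle_{\mathcal{H}\oplus\mathcal{H}} = \langle af + bT^*f, j\rangle_{\mathcal{H}}.
\]
So $f\perp M_a^*\mathcal{J}$ for all denominators $a$ says: $af+bT^*f\perp\mathcal{J}$ in $\mathcal{H}$ for every representing pair $(a,b)$. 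Running $(a,b)$ over the coordinate functions $(a_k,b_k)$ of the Beurling pair $A,B$, this means each coordinate of $AF + BG$ lies in $\mathcal{J}^{\perp_{\mathcal H}}$, where I write $F=(a_1f,a_2f,\dots)$... — more precisely, $a_kf + b_kT^*f \in \mathcal{J}^{\perp}$ for every $k$, i.e. the $\mathcal{H}$-valued "vector" $A^*\!\cdot f + B^*\!\cdot(T^*f)$ has all entries orthogonal to $\mathcal{J}$. I would then repackage this: since $M_A M_A^* + M_B M_B^*$ is the $(1,1)$-entry of the projection onto $G(T)$ added to... — cleaner is to note that for the row multiplier $(A\ B)$ acting on $\mathcal{H}\otimes(\mathcal{E}\oplus\mathcal{E})$, and then invoke that $\mathcal{J}$ being $*$-invariant means $\mathcal{J}^{\perp_{\mathcal H}}$ is invariant under all $M_\varphi$, so I can hit the relation with bounded multipliers and pass to the closure.

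The crux, and the step I expect to be the main obstacle, is converting "$a_kf + b_k T^*f\in\mathcal{J}^{\perp}$ for all $k$" into "$f\in\mathcal{J}^{\perp}$" (equivalently $f=0$ in $\mathcal{J}$). The idea is that the denominators $a$ are a rich enough family — in particular, by the density part of Lemma~\ref{lem:containment-Ma*}, $\mathcal{M}(a^*)$ is dense in $\mathcal{H}(B)$ for nonvanishing $a$, and by Proposition~\ref{prop:big-phi-little-phi-CNP} every denominator is $A\Phi$ for a contractive $\Phi$ — that orthogonality to all of them forces $f$ to be $\mathcal{H}(B)$-orthogonal to the dense span $\mathrm{span}\{M_a^*\mathcal{H}\}$... but that would give $f=0$ in $\mathcal{H}(B)$ outright, which is too strong; what we actually want is orthogonality to $M_a^*\mathcal{J}$, not $M_a^*\mathcal{H}$. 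So the honest approach: take $j\in\mathcal{J}$ arbitrary. I want $f\perp j$ in $\mathcal{H}(B)$. From the computation above with a fixed nonvanishing denominator $a$, and using that $\mathcal{J}$ is $*$-invariant, $j$ is a limit (in $\mathcal{H}(B)$, hence in $\mathcal{H}$) of elements of the form $M_a^* M_a^{*-?}$... — rather, use the corona-free substitute: write $j$, via Lemma~\ref{lem:b*f=a*g-CNP} applied to $j\in\mathcal{H}(B)$, so that $M_b^*j = M_a^*(T^*j)$, hence $M_u^*M_a^*j + M_v^*M_b^*j = \dots$ only works with a corona pair, which we don't have. Instead I would argue: fix nonvanishing denominator $a$; the map $g\mapsto M_a^*g$ has dense range in $\mathcal{H}(B)$, and it intertwines $M_\varphi^*$ on $\mathcal{H}$ with $M_\varphi^*$ on $\mathcal{H}(B)$, so $(M_a^*)^{-1}(\mathcal{J}) =: \mathcal{K}_a$ is a $*$-invariant (densely-defined-domain) subspace of $\mathcal{H}$ with $M_a^*\mathcal{K}_a$ dense in $\mathcal{J}$; then $f\perp M_a^*\mathcal{J}\supset M_a^*(a\mathcal{K}_a\cap \cdots)$. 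This is getting delicate; the clean resolution is to observe that $M_a^*\mathcal J\supseteq M_a^*M_{a'}^*\mathcal J$ is not circular if we exploit that products of denominators are denominators (since the product of the Beurling-pair-subordinate contractions $\Phi$ stays subordinate, by Proposition~\ref{prop:big-phi-little-phi-CNP} composed with boundedness), so $\mathrm{span}\{M_a^*\mathcal J\}$ is an algebra-module over $\{M_a^*\}$, and then a maximality/Zorn or limiting argument pushes the closure up to all of $\mathcal J$. I would present it by: let $\mathcal{N}$ be the $\mathcal{H}(B)$-closure of the span; $\mathcal{N}\subseteq\mathcal{J}$ is $*$-invariant and is invariant under every $M_a^*$ with $a$ a denominator; if $f\perp_{\mathcal{H}(B)}\mathcal{N}$ but $f\in\mathcal{J}$, then the computation above with all denominators gives $af+bT^*f\perp_{\mathcal H}\mathcal{N}$'s full $\mathcal H$-saturation; applying this with $a$ ranging over a cyclic family and using that $1$ is a rationally strictly cyclic vector (as in Proposition~\ref{prop:affiliated-implies-closeable}) lets me solve $af+bT^*f=0$ in the limit with $a$ "outer-like", forcing $(I+TT^*)f=0$ and hence $f=0$, exactly as at the end of Lemma~\ref{lem:containment-Ma*}. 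That last reduction — getting an honest identity $af+bT^*f\equiv0$ out of the orthogonality, which needs the cyclicity of the constants together with the richness of the denominator family — is the real work.
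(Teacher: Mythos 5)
Your setup is sound and in fact coincides with the paper's: the span lies in $\mathcal J$ because $\mathcal J$ is $*$-invariant, and for density you take $f\in\mathcal J$ with $f\perp_{\mathcal H(B)} M_a^*\mathcal J$ for every denominator $a$ and compute, via the graph-norm identification and $T^*M_a^*j=M_b^*j$, that $\langle f, M_a^*j\rangle_{\mathcal H(B)}=\langle af+bT^*f, j\rangle_{\mathcal H}$ for all $j\in\mathcal J$. Testing against $j=M_\varphi^*f\in\mathcal J$ this is exactly the paper's identity $\langle\varphi af,f\rangle+\langle\varphi ag,g\rangle=0$ (with $g=T^*f$), valid for all $\varphi\in\mr{Mult}\,\cH$ and all denominators $a$. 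Up to this point everything you write is correct.

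The gap is the step you yourself flag as ``the real work'': you never actually remove the factor $a$. Your proposed endgame --- upgrading $af+bT^*f\perp_{\mathcal H}\mathcal J$ to an identity $af+bT^*f\equiv 0$ and then invoking $(I+TT^*)f=0$ as in Lemma~\ref{lem:containment-Ma*} --- does not work: there is no reason the vector $af+bT^*f$ should lie in the $\mathcal H$-closure of $\mathcal J$, so orthogonality to $\mathcal J$ does not force it to vanish (that argument is available in Lemma~\ref{lem:containment-Ma*} only because there one tests against \emph{all} of $\mathcal H$). The alternatives you gesture at (corona pairs, products of denominators forming a module, a Zorn-type argument, rational strict cyclicity of $1$) are not carried out and none obviously closes the loop. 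The missing ingredient is Lemma~\ref{lem:approximate-unit}: the denominators generate the ideal $\mathcal A=\{A\Phi\}$, which is norm dense in $\cH$ since $\mathcal M(A)=\nbdom T$ is dense, so $\mathcal A$ admits a contractive weak-$*$ approximate unit $a_n\to 1$. Weak-$*$ convergence applied to the functional $\psi\mapsto\langle\psi f,f\rangle+\langle\psi g,g\rangle$ (take $\varphi=1$ and $a=a_n$, suitably normalized to be denominators) yields $\|f\|^2+\|g\|^2=0$, hence $f=0$. Without this (or an equivalent) weak-$*$ density argument the proof is incomplete.
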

\begin{proof}
Suppose $f\in \mathcal J$ and $f\bot M_a^*\mathcal J$ for all $a$. Now consider any fixed $a$. Then in particular, since $M_a^*\mathcal J$ is $*$-invariant, we have $f\bot M_\varphi^*M_a^*f$ for every $\varphi\in \mr{Mult} \, \cH$. Let $g=T^*f$, then $T^*M_\varphi^*M_a^*f=M_\varphi^*M_a^*g$. We then have for every multiplier $\varphi\in \mr{Mult} \, \cH$, by the identification of the $\mathcal H(B)$ norm with the graph norm, 
\begin{align}
0 &=\langle f, M_\varphi^*M_a^*f\rangle_{\mathcal H(B)} \\
&= \langle f, M_\varphi^*M_a^*f\rangle_{\mathcal H} +  \langle g,M_\varphi^*M_a^*g\rangle_{\mathcal H} \\
&= \langle \varphi af, f\rangle +\langle \varphi a g,g\rangle
\end{align}
It follows that 
\begin{equation}\label{eqn:vanishing-against-ideal}
  \langle \varphi af, f\rangle +\langle \varphi a g,g\rangle =0
\end{equation}
for every multiplier $\varphi$, and for every denominator $a$. Since the  weak-* closed ideal generated by the denominators $a$ is equal to all of $Mult (\mathcal H)$, (see Lemma~\ref{lem:approximate-unit} below) it then follows from (\ref{eqn:vanishing-against-ideal}) that
\[
  \langle \varphi f, f\rangle +\langle\varphi g,g\rangle =0
\]
for all multipliers $\varphi$, in particular for $\varphi=1$, so we conclude that $\|f\|^2+\|g\|^2=0$ and thus $f=0$.
\end{proof}

\begin{thm}
Let $h$ be a densely defined multiplier. The $*$-invariant subspaces of $\nbdom T^*=\mathcal H(B)$ are the intersections of $\mathcal H(B)$ with the $*$-invariant subspaces of $\mathcal H$. Precisely, if $\mathcal J$ is a closed $*$-invariant subspace of $\mathcal H(B)$ and $\mathcal K$ is the closure of $\mathcal J$ in $\mathcal H$, we have $\mathcal J = \mathcal K\cap \mathcal H(B)$.

In particular if $\cJ ' \subseteq \cH (B)$ is $*$-invariant but not necessarily closed, and $\cK '$ is the embedding of $\cJ '$ into $\cH$, then $\cK '$ is dense in $\mathcal H$ if and only if $\cJ '$ is dense in $\mathcal H(B)$, \emph{i.e.} if and only if $\cJ '$ is a core for $T^*$.
\end{thm}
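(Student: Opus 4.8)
The plan is to prove the theorem in two stages: first the main assertion about closed $*$-invariant subspaces, and then the ``in particular'' statement about dense (not necessarily closed) $*$-invariant subspaces, which should follow quickly.

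For the main assertion, let $\mathcal J \subset \mathcal H(B)$ be closed and $*$-invariant, and let $\mathcal K$ be the closure of $\mathcal J$ in $\mathcal H$. Since the inclusion $\mathcal H(B) \hookrightarrow \mathcal H$ is contractive and $\mathcal J$ is $*$-invariant, $\mathcal K$ is a closed $*$-invariant subspace of $\mathcal H$, and $\mathcal K \cap \mathcal H(B)$ is a closed (in the $\mathcal H(B)$ norm) subspace of $\mathcal H(B)$ containing $\mathcal J$. The content is the reverse inclusion $\mathcal K \cap \mathcal H(B) \subseteq \mathcal J$. So fix $f \in \mathcal K \cap \mathcal H(B)$ and suppose $f \perp_{\mathcal H(B)} \mathcal J$; I must show $f = 0$. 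The idea is to mimic the computation in Lemma~\ref{lem:invariant-subspace}. By Lemma~\ref{lem:invariant-subspace}, $\mathrm{span}\{M_a^*\mathcal J : a \text{ a denominator}\}$ is dense in $\mathcal J$, so $f \perp_{\mathcal H(B)} M_a^* j$ for every denominator $a$ and every $j \in \mathcal J$; in particular $f \perp_{\mathcal H(B)} M_\varphi^* M_a^* j_0$ for a fixed $j_0 \in \mathcal J$ whose $\mathcal H$-closure is all of $\mathcal K$ (one may take any dense sequence in $\mathcal J$, or argue with the whole subspace). Actually, the cleanest route: since $f \in \mathcal K$, which is the $\mathcal H$-closure of $\mathcal J$, it suffices to show $\langle f, j\rangle_{\mathcal H} = 0$ for all $j \in \mathcal J$ — wait, that would give $f \perp_{\mathcal H} \mathcal K$, hence $f = 0$ since $f \in \mathcal K$. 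So the target is: $f \perp_{\mathcal H(B)} \mathcal J$ and $f \in \mathcal K$ together force $\langle f, j \rangle_{\mathcal H} = 0$ for all $j \in \mathcal J$.

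To get there, let $g = T^*f$ (recall $f \in \mathcal H(B) = \nbdom T^*$). For any $j \in \mathcal J$, any denominator $a$, and any $\varphi \in \mr{Mult}\,\cH$, the element $M_\varphi^* M_a^* j$ lies in $\mathcal J$ (by $*$-invariance of $\mathcal J$), so using the graph-norm description of $\|\cdot\|_{\mathcal H(B)}$ and the relation $T^* M_\varphi^* M_a^* j = M_\varphi^* M_a^* (T^*j)$,
\begin{align*}
0 &= \langle f, M_\varphi^* M_a^* j \rangle_{\mathcal H(B)} \\
&= \langle f, M_\varphi^* M_a^* j\rangle_{\mathcal H} + \langle g, M_\varphi^* M_a^*(T^*j)\rangle_{\mathcal H} \\
&= \langle \varphi a f, j\rangle_{\mathcal H} + \langle \varphi a g, T^*j\rangle_{\mathcal H}.
\end{align*}
Since the weak-$*$ closed ideal generated by the denominators is all of $\mr{Mult}\,\cH$ (Lemma~\ref{lem:approximate-unit}), we may drop the factor $a$ and conclude $\langle \varphi f, j\rangle_{\mathcal H} + \langle \varphi g, T^*j\rangle_{\mathcal H} = 0$ for all $\varphi \in \mr{Mult}\,\cH$ and all $j \in \mathcal J$. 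Taking $\varphi = 1$ gives $\langle f, j\rangle_{\mathcal H} + \langle g, T^*j\rangle_{\mathcal H} = 0$, i.e. $\langle f, j\rangle_{\mathcal H(B)} = 0$, which we already knew. The new information comes from noting that, more strongly, $\langle f, M_\varphi^* j \rangle_{\mathcal H(B)} = 0$ for all $\varphi$, which by the above display with $a$ removed reads $\langle \varphi f, j\rangle_{\mathcal H} + \langle \varphi g, T^*j\rangle_{\mathcal H} = 0$. Now pick a countable family $\{j_n\}$ dense in $\mathcal J$ (hence with $\mathcal H$-closure $\mathcal K$), and observe the functionals $j \mapsto \langle \varphi f, j\rangle_{\mathcal H}$; the hoped-for conclusion $f \perp_{\mathcal H} \mathcal J$ must be extracted by playing off $\langle \varphi f, j\rangle = -\langle \varphi g, T^*j\rangle$ against the boundedness of the various operators. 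I expect this pairing argument — showing that the ``error term'' $\langle \varphi g, T^* j\rangle$ cannot obstruct $f$ from being $\mathcal H$-orthogonal to $\mathcal K$ — to be the main obstacle, and the key tool will be that $f \in \mathcal K = \overline{\mathcal J}^{\mathcal H}$ allows one to approximate $f$ in $\mathcal H$ by elements of $\mathcal J$ on which the identity $\langle \varphi f, j\rangle + \langle \varphi g, T^*j\rangle = 0$ is available, combined with a limiting argument using that $T^*$ is closed.

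For the ``in particular'' statement: let $\mathcal J' \subseteq \mathcal H(B)$ be $*$-invariant (not necessarily closed), let $\mathcal J = \overline{\mathcal J'}^{\mathcal H(B)}$, which is closed and $*$-invariant, and let $\mathcal K = \overline{\mathcal J}^{\mathcal H} = \overline{\mathcal J'}^{\mathcal H} = \mathcal K'$ (the $\mathcal H$-closure of the embedding of $\mathcal J'$). By the main part, $\mathcal J = \mathcal K \cap \mathcal H(B)$. If $\mathcal J'$ is dense in $\mathcal H(B)$, then $\mathcal J = \mathcal H(B)$, so $\mathcal K \supseteq \mathcal H(B)$; but $\mathcal H(B)$ contains the kernel functions $k_x$ (Lemma~\ref{lem:DA-kernel-formulas}) whose span is dense in $\mathcal H$, whence $\mathcal K = \mathcal H$, i.e. $\mathcal K'$ is dense in $\mathcal H$. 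Conversely, if $\mathcal K' = \mathcal H$, then $\mathcal H(B) = \mathcal H \cap \mathcal H(B) = \mathcal K \cap \mathcal H(B) = \mathcal J = \overline{\mathcal J'}^{\mathcal H(B)}$, so $\mathcal J'$ is dense in $\mathcal H(B)$. Finally, $\mathcal J'$ being dense in $\mathcal H(B) = \nbdom T^*$ equipped with the graph norm is precisely the statement that $\mathcal J'$ is a core for $T^*$, which finishes the proof.
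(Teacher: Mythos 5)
Your reduction is set up correctly --- the only content is the inclusion $\mathcal K\cap\mathcal H(B)\subseteq\mathcal J$, and your treatment of the ``in particular'' clause is complete and correct --- but the main step has a genuine gap, which you acknowledge yourself. After deriving $\langle \varphi f, j\rangle_{\mathcal H}+\langle \varphi g, T^*j\rangle_{\mathcal H}=0$ for all $\varphi\in\mr{Mult}\,\cH$ and all $j\in\mathcal J$, you have merely restated the hypothesis $f\perp_{\mathcal H(B)}M_\varphi^*\mathcal J$, and no amount of ``playing off'' the two terms will extract $f\perp_{\mathcal H}\mathcal J$ from it. The crucial information that $f$ lies in $\mathcal K$ (i.e.\ is an $\mathcal H$-norm limit of elements of $\mathcal J$) never actually enters your computation, and it cannot enter through orthogonality alone: what is needed is a mechanism that converts $\mathcal H$-convergence of a sequence from $\mathcal J$ into $\mathcal H(B)$-convergence, so that the limit can be captured by the $\mathcal H(B)$-closedness of $\mathcal J$.

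That mechanism is Lemma~\ref{lem:containment-Ma*}: for each denominator $a$ the space $\mathcal M(a^*)$ is contained boundedly in $\mathcal H(B)$, so $M_a^*$ is bounded as a map from $\mathcal H$ into $\mathcal H(B)$. Hence if $f\in\mathcal K$ and $f_n\in\mathcal J$ with $f_n\to f$ in $\mathcal H$, then $M_a^*f_n\to M_a^*f$ in the $\mathcal H(B)$ norm; since each $M_a^*f_n\in\mathcal J$ by $*$-invariance and $\mathcal J$ is $\mathcal H(B)$-closed, we get $M_a^*f\in\mathcal J$, i.e.\ $M_a^*\mathcal K\subseteq\mathcal J$ for every denominator $a$. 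With this bridge in hand either route finishes: applying Lemma~\ref{lem:invariant-subspace} to the closed $*$-invariant subspace $\mathcal K\cap\mathcal H(B)$ (rather than to $\mathcal J$, as you did) shows that $\mathrm{span}\{M_a^*(\mathcal K\cap\mathcal H(B))\}$ is dense in $\mathcal K\cap\mathcal H(B)$ and contained in $\mathcal J$, whence $\mathcal K\cap\mathcal H(B)\subseteq\mathcal J$; alternatively, in your orthogonality setup you may now take $j=M_\varphi^*M_a^*f\in\mathcal J$, and the computation of Lemma~\ref{lem:invariant-subspace} yields $\|f\|^2+\|T^*f\|^2=0$, so $f=0$. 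Without the bounded containment $\mathcal M(a^*)\subseteq\mathcal H(B)$ the argument does not close.
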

\begin{proof} We use a slightly modified version of the argument of Sarason~\cite[Theorem 5]{sarason-1986b}, using the lemma just proved. Let $\mathcal J\subset \mathcal H(B)$ be $*$-invariant  and let $\mathcal K$ be the closure of $\mathcal J$ in $\mathcal H$. Then $\mathcal K$ is $*$-invariant. If $f\in \mathcal K$, there is a sequence $f_n$ from $\mathcal J$ converging to $f$ in the $\mathcal H$ norm. If $a,b$ is a representing pair coming from the Beurling pair $A,B$, then $M_a^*f_n\to M_a^*f$ in the norm of $\mathcal M(a^*)$, and hence also in the norm of $\mathcal H(B)$, since $\mathcal M(a^*)$ is contained boundedly in $\mathcal H(B)$ (Lemma~\ref{lem:containment-Ma*}). Since $M_a^*f_n$ belongs to $\mathcal J$ for each $n$, so does $M_a^*f$. Thus $M_a^*\mathcal K\subset \mathcal J$. By Lemma~\ref{lem:invariant-subspace}, the span of $M_a^*(\mathcal K \cap \mathcal H(B))$, over all denominators $a$, is dense in $\mathcal K\cap \mathcal H(B)$, so $\mathcal K\cap \mathcal H(B)\subset \mathcal J$. As the opposite inclusion is trivial, the theorem is proved. 
\end{proof}

\begin{cor} If $h$ is a densely defined multiplier of the Drury-Arveson space $H^2_d$, then $\nbdom T ^*=\mathcal H(B)$ contains the polynomials, which are dense in this space.
\end{cor}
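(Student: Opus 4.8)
The plan is to read this off from the $*$-invariant subspace theorem just proved, which was designed precisely for such applications. Write $\mathcal P$ for the algebra of polynomials in $z_1,\dots,z_d$, regarded as a (non-closed) subspace of $H^2_d$; recall that $\mathcal P$ is dense in $H^2_d$. It suffices to establish two things: (a) $\mathcal P\subseteq \nbdom T^*=\mathcal H(B)$, and (b) $\mathcal P$ is $*$-invariant, i.e. $M_\varphi^*\mathcal P\subseteq\mathcal P$ for every $\varphi\in\mr{Mult} \, \cH$ (here $M_\varphi^*$ acts in $H^2_d$). Granting these, $\mathcal P$ is a $*$-invariant (not necessarily closed) subspace of $\mathcal H(B)$ whose image in $H^2_d$ is dense, so the last assertion of the preceding theorem yields that $\mathcal P$ is dense in $\mathcal H(B)=\nbdom T^*$, as claimed.

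Part (b) is the same elementary computation that makes $\mathcal P$ a backward-shift invariant subspace of $H^2$ in the disk: if $\varphi=\sum_\delta \varphi_\delta z^\delta$ is the Taylor expansion of $\varphi$, then pairing against the orthogonal monomial basis gives
\[
  M_\varphi^* z^\beta=\sum_{\gamma\leq\beta}\overline{\varphi_{\beta-\gamma}}\,\frac{\|z^\beta\|^2}{\|z^\gamma\|^2}\,z^\gamma,
\]
a polynomial of degree at most $|\beta|$; hence $M_\varphi^*\mathcal P\subseteq\mathcal P$, and once (a) is known this says $\mathcal P$ is $*$-invariant in $\mathcal H(B)$ in the required sense.

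The substance is in part (a). The key point is that $h$ is holomorphic in a neighbourhood of $0$: since $\nbdom T=\ran{M_A}$ is dense in $H^2_d$, the row multiplier $A$ cannot vanish at $0$ (if it did, every function in $\ran{M_A}$ would vanish at $0$, forcing $\ran{M_A}\subseteq k_0^{\perp}$, contradicting density), so some coordinate $a:=a_j$ of $A$ has $a(0)\neq 0$; setting $b:=b_j=Ta_j$ we have $b=ah$ pointwise on $\mathbb{B}^d$ (as in the proof of Theorem~\ref{thm:affiliated-implies-multiplication-CNP}), whence $h=b/a$ is holomorphic near $0$. Now fix a multi-index $\alpha$. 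For any $g\in\nbdom T$ we have $Tg=hg\in H^2_d$, and near $0$ the power series of $hg$ is the Cauchy product of those of $h$ and $g$, so its coefficient at $\alpha$ is the finite sum $\sum_{\beta\leq\alpha}h_\beta\, g_{\alpha-\beta}$. Using $|g_\gamma|=|\langle g, z^\gamma/\|z^\gamma\|^2\rangle|\leq \|g\|_{H^2_d}/\|z^\gamma\|$, we get
\[
  |\langle Tg, z^\alpha\rangle_{H^2_d}|=\Big|\sum_{\beta\leq\alpha}h_\beta\, g_{\alpha-\beta}\Big|\,\|z^\alpha\|^2\ \leq\ \Big(\sum_{\beta\leq\alpha}\frac{|h_\beta|}{\|z^{\alpha-\beta}\|}\Big)\|z^\alpha\|^2\,\|g\|_{H^2_d},
\]
so the functional $g\mapsto\langle Tg, z^\alpha\rangle$ is bounded on the dense domain $\nbdom T$; therefore $z^\alpha\in\nbdom T^*$. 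By linearity $\mathcal P\subseteq\nbdom T^*=\mathcal H(B)$, which is (a).

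I expect part (a) to be the only real obstacle — specifically, isolating the holomorphy of $h$ at a point and then controlling $\langle Tg, z^\alpha\rangle$ by $\|g\|_{H^2_d}$ rather than by the graph norm; once the monomials are seen to lie in $\mathcal H(B)$, the rest is the $*$-invariant subspace theorem applied verbatim. A minor loose end worth recording is that the single column $(a_j; b_j)$ extracted from the partially isometric $\Theta$ is again contractive (it is $M_\Theta$ composed with the isometric inclusion of one coordinate of $\mathcal E$), so $(a_j,b_j)$ is a bona fide representing pair; but for the argument above only the pointwise identity $b_j=a_j h$ near $0$ is actually used.
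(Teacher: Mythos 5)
Your proof is correct and follows essentially the same route as the paper: show each monomial $z^\alpha$ lies in $\nbdom T^*$ because the functional $g\mapsto\langle hg,z^\alpha\rangle$ depends only on finitely many Taylor coefficients of $g$ (hence is bounded in the $H^2_d$ norm), then invoke the $*$-invariant subspace theorem together with $M_\varphi^*\mathcal P\subseteq\mathcal P$ and the density of polynomials in $H^2_d$. The only difference is that you spell out the holomorphy of $h$ near $0$ and the Cauchy-product bound explicitly, a detail the paper leaves implicit.
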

\begin{proof}
  Let $h$ be a densely defined multiplier and fix a monomial $z^{\bf n}$. By the definition of the Drury-Arveson norm, for every $f\in \nbdom T$ the functional
  \[
    f\to \langle hf, z^{\bf n}\rangle
  \]
  depends only on a fixed, finite set (depending on ${\bf n}$) of Taylor coefficients of $f$. Since evaluation of Taylor coefficients is bounded for the Drury-Arveson norm, it follows that this functional extends to a bounded functional on $H^2_d$, which means that $z^{\bf n}$ belongs to $\nbdom T ^*=\mathcal H(B)$. Letting $\mathcal J$ be the closure of the polynomials in $\nbdom T ^*$, and observing that $M_\varphi^*p$ is a polynomial for every polynomial $p$ and multiplier $\varphi\in \mr{Mult} \, H^2 _d$, (and that the polynomials are dense in $H^2_d$), we conclude by the theorem that $\mathcal J=\mathcal H(B)$. 
  \end{proof}

  The proof of the corollary can evidently be adapted to other holomoprhic CNP spaces in which the monomials and polynomials are sufficiently well-behaved, e.g. the Dirichlet space in the unit disk. 

\subsection{Extensions and restrictions}

Under a mild additional assumption on $\multH$, namely that nonzero multipliers are injective operators, we can show that a (closed) densely defined multiplier affiliated to $\multH$ has no proper (closed) extensions or restrictions that are also affiliated to $\multH$. This hypothesis is of course satisfied in holomorphic spaces over connected domains, such as the Drury-Arveson space, Dirichlet space, etc. 

\begin{lemma}
Let $\mathcal H$ be a CNP space and suppose that every nonzero bounded multiplication operator $f\to \varphi f$ is injective. If $T\neq 0$ is a closed, densely defined operator affiliated to $\multH$, then $T$ is injective.
\end{lemma}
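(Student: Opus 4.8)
Let $h$ be the symbol of $T$, so $Tf = hf$ on $\nbdom T$, and by Theorem~\ref{thm:affiliated-implies-multiplication-CNP} fix nonzero multipliers $a,b \in \multH$ with $b = ah$. Since $T \neq 0$, the symbol $h$ is not identically zero, and since $b = ah$ with $a \neq 0$, the function $b$ is also not identically zero (using the hypothesis that nonzero multipliers are injective, hence in particular that the product $ah$ of two nonvanishing-in-the-appropriate-sense objects cannot vanish identically — more precisely, if $b \equiv 0$ then $ah \equiv 0$, so $M_a M_h f = 0$ for $f$ in a dense set, contradicting injectivity of $M_a$ once we know $h \not\equiv 0$; and $h \not\equiv 0$ because $T \neq 0$).

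First I would reduce the claim to a statement about $b$. Suppose $f \in \nbdom T$ with $Tf = 0$, i.e. $hf \equiv 0$ as a function on $X$. The key observation is that $af \in \mathcal H$: indeed $a \in \multH$, so $M_a$ is bounded, hence $af \in \mathcal H$; moreover $b f = a(hf) = 0$ since $hf \equiv 0$ pointwise. So we have $M_b f = 0$. Since $b$ is a nonzero multiplier, $M_b$ is injective by hypothesis, and therefore $f = 0$. This shows $\ker T = \{0\}$, i.e. $T$ is injective.

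The one point that requires a little care is verifying that $b$ is genuinely a nonzero multiplier, so that the injectivity hypothesis applies to it. We have $b \in \multH$ by construction. If $b \equiv 0$, then since $b = ah$ we would have $a(x)h(x) = 0$ for all $x \in X$. Because $a$ is a nonzero multiplier, $M_a$ is injective, which in a CNP space (whose elements are continuous functions on $X$, identified with a subset of a ball, and in particular $1 \in \mathcal H$) forces $a$ to be non-vanishing on a dense subset of $X$ — concretely, $\{x : a(x) = 0\}$ has empty interior, since otherwise $M_a$ would kill the kernel functions $k_x$ for $x$ in that interior (as $M_a^* k_x = \overline{a(x)} k_x = 0$), contradicting injectivity of $M_a^*$ on the nonzero vectors $k_x$... actually injectivity of $M_a$ is what we want, so argue: if $a$ vanishes on an open set $U$, pick $f \in \mathcal H$ supported "near" $U$ — cleaner is: $a h \equiv 0$ and $a$ nonvanishing off a closed nowhere-dense set forces $h \equiv 0$ by continuity, contradicting $T \neq 0$. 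Hence $b \not\equiv 0$.

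The main obstacle, such as it is, is purely this bookkeeping around why $b$ is nonzero; the rest is a two-line argument. I expect the author's proof to streamline this by invoking that nonzero multipliers are injective to conclude directly that $h \not\equiv 0 \Rightarrow b = ah \not\equiv 0$ (two nonzero "functions" in a space without zero divisors cannot multiply to zero), which is immediate in the holomorphic-over-connected-domain examples the lemma is aimed at, and then finishing with $M_b f = a \cdot Tf = 0 \Rightarrow f = 0$.
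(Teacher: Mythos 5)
Your proposal is correct and follows essentially the same route as the paper: both reduce to the factorization $b = ah$ with $a,b$ nonzero multipliers from Theorem~\ref{thm:affiliated-implies-multiplication-CNP}, and conclude from $Tf=0$ that $M_b f = a\cdot(hf) = 0$, hence $f=0$ by injectivity of $M_b$. The only difference is that the paper takes the nonvanishing of $b$ directly from the statement of that theorem, whereas you verify it by hand (your cleanest version --- if $b\equiv 0$ then $M_a(Tf)=0$ on the dense domain, forcing $T=0$ --- is all that is needed; the topological digression about zero sets is unnecessary).
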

\begin{proof}
We know that $T$ is given by multiplication by some function $h$, and there exist nonzero multipliers $a,b$ so that $b=ah$. If $hf=0$ then $bf=0$ and thus $f=0$, since $M_b$ is injective.
\end{proof}

\begin{thm}
  Let $\mathcal H$ is a CNP space and suppose every nonzero bounded multiplier is injective.   If $T_1, T_2$ are nonzero, closed, densely defined multipliers of $\mathcal H$ and $T_2 \subseteq T_1$ then $T_2 =T_1$.
\end{thm}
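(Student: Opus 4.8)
The plan is to show that if $T_2 \subseteq T_1$ with both operators nonzero, closed, densely defined and affiliated to $\multH$, then they have the same symbol and the same domain. First I would invoke Theorem~\ref{thm:affiliated-implies-multiplication-CNP}: each $T_i$ is multiplication by a function $h_i$, with $\nbdom{T_i} = \{f \in \cH : h_i f \in \cH\}$ its maximal domain. Since $T_2 \subseteq T_1$, there is some nonzero $f \in \nbdom{T_2}$ (as $T_2 \neq 0$ and is densely defined), and for such $f$ we have $h_2 f = T_2 f = T_1 f = h_1 f$. Because $T_2$ is affiliated to $\multH$, for every $\varphi \in \multH$ the function $\varphi f$ also lies in $\nbdom{T_2} \subseteq \nbdom{T_1}$, so $h_2 \varphi f = h_1 \varphi f$ as elements of $\cH$. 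Using the representing pair for $h_2$, i.e.\ nonzero multipliers $a, b \in \multH$ with $b = a h_2$, one sees $a f$ is a nonzero element of $\cH$ (since $M_a$ is injective and $f \neq 0$), and $(h_1 - h_2)(af) = 0$ in $\cH$. Now $af \in \nbdom{T_2}$, hence in $\nbdom{T_1}$, so $h_1(af) \in \cH$; combined with $h_2(af) = b f \in \cH$ we get that $(h_1 - h_2)(af)$ is the zero element of $\cH$, i.e.\ the zero function, while $af$ is not the zero function.

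The next step is to promote this to $h_1 = h_2$ as functions on $X$. The function $af$ vanishes only where $a$ vanishes or $f$ vanishes; on the (possibly empty) zero set of $af$ I cannot immediately conclude $h_1 = h_2$ from this single $f$ alone. To handle this I would use that $\nbdom{T_2}$ is dense in $\cH$, so the functions $\{ g \in \nbdom{T_2}\}$ have no common zero in $X$ (if they all vanished at some $x_0$, then $k_{x_0} \perp \nbdom{T_2}$, contradicting density). For each $x \in X$ pick $g \in \nbdom{T_2}$ with $g(x) \neq 0$; then $(h_1 - h_2) g = 0$ as a function (by the same argument as above applied to $g$ in place of $f$, since $g$ itself is already in the domain), forcing $h_1(x) = h_2(x)$. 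Hence $h_1 = h_2$ everywhere on $X$, and therefore $T_1$ and $T_2$ are multiplication by the same function.

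Finally, since $T_1$ and $T_2$ are both equipped with the maximal domain for that common symbol $h := h_1 = h_2$ — this is the content of Proposition~\ref{prop:multipliers-are-closeable-CNP} together with the closedness assumption, which forces the closed densely defined multiplier by $h$ affiliated to $\multH$ to be exactly the operator on $\nbdom h = \{f : hf \in \cH\}$ — we get $\nbdom{T_1} = \nbdom h = \nbdom{T_2}$, and the operators agree on this common domain. Therefore $T_1 = T_2$.

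\medskip

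\noindent\textbf{Main obstacle.} The delicate point is the passage from ``$(h_1 - h_2)g = 0$ in $\cH$ for each $g$ in the domain'' to ``$h_1 = h_2$ pointwise on all of $X$''; it hinges on the fact that the domain of $T_2$, being dense, contains for each point $x$ a function not vanishing at $x$, which follows from boundedness of point evaluations. A secondary subtlety is justifying that a closed densely defined multiplier affiliated to $\multH$ is automatically defined on the \emph{maximal} domain $\{f : hf \in \cH\}$; this is exactly Proposition~\ref{prop:multipliers-are-closeable-CNP}, which shows that multiplication by $h$ with its maximal domain is already closed, so any closed multiplier with smaller domain and the same symbol would contradict the minimality of closures unless the domains coincide — here one should check that $T_2 \subseteq T_1$ and $T_1$ closed with maximal domain already suffices without needing $T_1$ itself to be maximal a priori, which it is by the cited proposition.
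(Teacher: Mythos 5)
The easy half of your argument---that $T_1$ and $T_2$ have the same symbol---is fine: on $\nbdom T_2$ one has $h_1f=h_2f$ as elements of $\cH$, and since density forces, for each $x\in X$, some $f\in\nbdom T_2$ with $f(x)\neq 0$ (otherwise $k_x\perp\nbdom T_2$), pointwise evaluation gives $h_1=h_2$ on all of $X$. (The detour through a representing pair and the function $af$ is not needed for this; your second paragraph already contains the clean argument.)

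The gap is in the final step, and it sits exactly where the whole difficulty of the theorem lives. You assert that a closed, densely defined multiplier affiliated to $\multH$ automatically carries the \emph{maximal} domain $\{f:hf\in\cH\}$, citing Theorem~\ref{thm:affiliated-implies-multiplication-CNP} and Proposition~\ref{prop:multipliers-are-closeable-CNP}. Neither result says this. The theorem only produces a symbol $h$ with $(Tf)(x)=h(x)f(x)$ for $f$ in whatever the domain happens to be, so it yields $\nbdom T_i\subseteq\{f:h f\in\cH\}$ and nothing more; the proposition says that the maximal-domain operator is closed, not that every closed affiliated operator with symbol $h$ is maximal. A closed operator can in general admit proper, closed, densely defined restrictions, and ruling this out for affiliated multipliers is precisely the content of the statement being proved---so your last step is circular. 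A symptom of this is that your argument never genuinely uses the standing hypothesis that nonzero bounded multipliers are injective (you invoke it only to note $af\neq 0$, which is incidental). The paper closes the gap differently: from the range inclusion $\nbran{T_2}\subseteq\nbran{T_1}$ it extracts, via a Douglas-type factorization for closed operators, an operator $C$ with $\dom{C}=\dom{T_2}$ and $T_2=T_1C$; the injectivity of $T_1$ (which follows from the injectivity hypothesis via $b=ah$) allows one to cancel $T_1$ in $T_1(\varphi C-C\varphi)f=0$, showing $C$ is itself affiliated and hence multiplication by a function $c$, and then $h(x)f(x)(1-c(x))=0$ forces $C=I$ and $T_1=T_2$. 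Some argument of this kind for the equality of domains is indispensable; identifying the symbols alone does not finish the proof.
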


\begin{proof}
Here, $T_2 \subseteq T_1$ means that $T_2$ is a closed restriction of $T_1$, \emph{i.e.}, $\dom{T_2} \subseteq \dom{T_1}$ and $T_2 f = T_1 f$ for all $f \in \dom{T_2}$. Hence $\ran{T_2} \subseteq \ran{T_1}$. By \cite[Theorem 2(3)]{DFL}, since $\ran{T_2} \subseteq \ran{T_1}$ and both $T_1, T_2$ are closed, there is a densely-defined operator $C$, with $\dom{C} = \dom{T_2}$ so that $T_2 = T_1 C$, and $\dom{C}$ is invariant for $\multH$. For any $f\in \dom{C} = \dom{T_2}$ and any $\varphi\in \multH$,
\ba T_1 C \varphi f & = & T_2 \varphi f= \varphi T_2 f \nn \\
& = & \varphi T_1 C f = T_1 \varphi C f\nn \ea 
This proves that $T_1 (\varphi C - C \varphi) f=0$.  Since $T_1$ is closed, we have $T = M_h$ for some densely defined multiplier $h$, and $T_1$ is injective by the last lemma, so 
$$\varphi C f = C \varphi f, $$ for all $f \in \dom{C}$ and all $\varphi\in\multH$, so $C$ is affiliated to $\multH$. By the closability property, $\ov{C}$ is a closed operator affiliated to the $\multH$ so that $\ov{C} = M_c$ acts as multiplication by some function $c$.

But then, for any $f \in \dom{T_2}$, since $T_2 f = T_1 f$, we have that if $T_1 = M_h$ and $T_2 = M_h | _{\nbdom T_2}$,
$$ (T_1 f) (x) = h(x) f(x) = (T_2 f) (x) = h(x) f(x) = (T_1 C f) (x) = h(x) c(x) f(x). $$ 
It follows that $h(x) f(x) ( 1 -c (x) ) = 0$, and this proves that $C = M_c = I$, so that $T_1 = T_2$.
\end{proof}

\begin{cor} \label{corecor}
Under the same hypotheses as the theorem, if $T$ is a densely defined multiplier, then any dense, $\multH-$invariant $\mc{C} \subseteq \dom{T}$ is a core for $T$, and any dense space $\mc{C} ^* \subseteq \dom{T^*}$ invariant for all $M_\varphi^*$ is a core for $T^*$. 
\end{cor}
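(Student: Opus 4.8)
The plan is to deduce both assertions from results already established, so that very little new work is required. For the first assertion, assume $T \neq 0$ (the case $T=0$ being vacuous, since then the closure of $T$ restricted to any dense subspace is again $0=T$). Given a dense, $\multH$-invariant subspace $\mc{C} \subseteq \dom{T}$, I would form the restriction $T|_{\mc{C}}$; because $T$ is closed and extends it, $T|_{\mc{C}}$ is closeable and its closure $S := \ov{T|_{\mc{C}}}$ satisfies $S \subseteq T$. Showing $S = T$ is exactly the statement that $\mc{C}$ is a core for $T$, and this I would obtain from the theorem proved just above (no proper closed restrictions among nonzero densely defined multipliers), once I check that $S$ is itself a nonzero, closed, densely defined multiplier.

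Closedness and dense definedness of $S$ are clear, since $\dom{S} \supseteq \mc{C}$ is dense in $\mathcal{H}$. To see that $S$ is affiliated to $\multH$: for $f \in \dom{S}$ choose $f_n \in \mc{C}$ with $f_n \to f$ and $T f_n \to S f$; then for $\varphi \in \multH$ the $\multH$-invariance of $\mc{C}$ gives $\varphi f_n \in \mc{C}$, while boundedness of $M_\varphi$ and the relation $T(\varphi f_n) = \varphi T f_n$ on $\mc{C}$ give $\varphi f_n \to \varphi f$ and $\varphi T f_n \to \varphi S f$; hence $\varphi f \in \dom{S}$ and $S(\varphi f) = \varphi S f$. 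Thus $S$ is a closed densely defined operator affiliated to $\multH$, hence a densely defined multiplier. Finally $S \neq 0$: by the lemma preceding that theorem, $T$ is injective because $T \neq 0$ and nonzero multipliers are injective, so $S f = T f \neq 0$ for any nonzero $f \in \mc{C}$. Applying the theorem with the pair $S \subseteq T$ then gives $S = T$.

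For the second assertion there is essentially nothing to prove: it is the ``in particular'' clause of the Theorem of Section~\ref{sec:invariant-subspaces} on $*$-invariant subspaces. Indeed $\dom{T^*} = \mathcal{H}(B)$, and $\mc{C}^*$ is a $*$-invariant (that is, $M_\varphi^*$-invariant) subspace of $\mathcal{H}(B)$ whose embedding into $\mathcal{H}$ is dense; that theorem asserts this is equivalent to $\mc{C}^*$ being dense in $\mathcal{H}(B)$, equivalently a core for $T^*$ --- and it does not even use the injectivity hypothesis. The only step needing any attention is the verification that $S = \ov{T|_{\mc{C}}}$ remains affiliated to $\multH$ after passing to the graph closure, i.e. that invariance and the intertwining relation survive the limit; this is the routine continuity-of-$M_\varphi$ argument indicated above, and I anticipate no genuine obstacle.
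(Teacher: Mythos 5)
Your proposal is correct. The first half is essentially the paper's argument: form $\check{T}=\ov{T|_{\mc{C}}}$, observe it is a closed, densely defined, nonzero operator affiliated to $\multH$ contained in $T$, and invoke the no-proper-closed-restrictions theorem. You supply two details the paper leaves implicit --- the continuity argument showing affiliation survives passage to the graph closure, and the disposal of the $T=0$ case needed because the theorem is stated for nonzero multipliers --- both of which are fine.

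For the second half you take a genuinely different route. The paper stays inside the extension/restriction framework: it sets $\check{T}^{\,*}=\ov{T^*|_{\mc{C}^*}}\subseteq T^*$, passes to adjoints to get $T\subseteq (\check{T}^{\,*})^*$, checks that the domain of this extension is $\multH$-invariant, and applies the same theorem in the "no proper extensions" direction. You instead quote the "in particular" clause of the $*$-invariant subspace theorem of Section~\ref{sec:invariant-subspaces}: $\mc{C}^*$ is a (not necessarily closed) $*$-invariant subspace of $\cH(B)=\dom{T^*}$ whose embedding in $\cH$ is dense, hence it is dense in the graph norm, i.e.\ a core. This is legitimate (that theorem precedes the corollary, so there is no circularity), it is shorter, and --- as you correctly observe --- it establishes the $T^*$ half without the standing hypothesis that nonzero multipliers are injective, which the paper's adjoint-based argument does use. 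The trade-off is that your route leans on the Beurling-pair machinery behind the invariant-subspace theorem, whereas the paper's is self-contained within the extension/restriction subsection; but as a deduction from already-proved results, yours is the more economical.
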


\begin{proof}
Clearly $\check{T} := \ov{T | _{\mc{C}}}  \subseteq T$, and $\check{T}$ is a closed operator affiliated to $\multH$. By the last theorem $\check{T} =T$, so that $\mc{C}$ is a core.

Conversely if $\check{T} ^* := \ov{T^* | _{\mc{C} ^*}}$ then $\check{T} ^* \subseteq T^*$ so that $T \subseteq \check{T}$, and since $\dom{\check{T} ^*}$ is invariant for all the $M_\varphi^*$, $\dom{\check{T}}$ will be $\multH-$invariant. It follows again that $T, \check{T}$ are closed, and so by the previous theorem, again $\check{T} = T$ so that $\check{T}^* = T^*$ and $\mc{C} ^*$ is a core.
\end{proof}

Another consequence is the following: still, of course, assuming that nonzero bounded multipliers are injective, we see that if $h$ is a densely defined multiplier that happens to belong to the Smirnov class, then it has a representing pair $a,b$ so that $a$ is cyclic; which means that $a$ has dense range, so by the corollary we see that the range of $M_a$ is a core for $T$, that is, $\mathcal M(a)$ is dense in $\mathcal M(A)=Dom(T)$.

  \section{Constructive approximation}\label{sec:constructive}

  We have seen that for any $\mathcal H(B)$ space in a CNP space $\mathcal H$, the kernel functions $k_x\in \mathcal H$ belong to $\mathcal H(B)$, and their span is dense in $\mathcal H(B)$. Similarly, in the special case of $H^2_d$, by the results of the last section the polynomials are dense in $\mathcal H(B)$. A result of El-Fallah et al. \cite[Theorem 4.4]{elfallah-et-al-2016} gives, in the special case of deBranges-Rovnyak subspaces in the unit disk, a more constructive method of approximating $f\in \mathcal H(B)$ by polynomials in $\mathcal H(B)$. In this section we show that their argument can be adapted to prove a similar constructive approximation theorem in our more abstract setting. Rather than work with polynomial approximation, we will work with approximation by finite sums of kernel functions, but the argument is easily modified to handle constructive polynomial approximation in those spaces where it makes sense (such as the Drury-Arveson space, Dirichlet space, etc.; see the comments at the end of the section).

  Let us fix some notation: as always we have a CNP space $\mathcal H$, a densely defined multiplier $h$, and a Beurling pair $A,B$ for $h$. Associated to the outer row multiplier $A$ is the ideal
  \[
    \mathcal A = \{A\Phi : \Phi\in \mr{Mult} \, \mathcal H, \mathcal H\otimes \mathcal E \}\subset \mr{Mult} \, \cH.
  \]
  As we have already seen, $\mathcal A$ is norm dense in $\mathcal H$, since $A$ is row outer. This will mean that $\mathcal A$ is automatically weak-* dense in $\mr{Mult} \, \cH$, which follows either from \cite{davidson-ramsey-shalit-2015}, or from the following Kaplansky-style density result, which is a more or less trivial reformulation of \cite[Lemma 3.3]{AHMR-preprint} (see also Remark 3.5 following that lemma; our statement corresponds to the special case $m=1$). The important point in our reformulation is the observation that the approximating sequence of multipliers can be chosen from a given norm dense ideal, which follows immediately from an inspection of the proof. 
  \begin{lemma}\cite[Lemma 3.3]{AHMR-preprint}\label{lem:approximate-unit} Let $\mathcal H$ be a CNP space and let $\mathcal A\subset \mr{Mult} \, \cH$ be an ideal of $\mr{Mult} \, \cH$ which is norm dense in $\mathcal H$. Then $\mathcal A$ admits a contractive weak-* approximate unit, that is, there is a sequence $(a_n)\subset \mathcal A$ such that $\|a_n\|_{\mr{Mult} \, \cH}\leq 1$ for all $n$, and $a_n\to 1$ in the weak-* topology.  (So in particular, $\mathcal A$ is weak-* dense in $\mr{Mult} \, \cH$.)
  \end{lemma}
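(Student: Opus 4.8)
The statement is, up to a trivial reformulation, \cite[Lemma 3.3]{AHMR-preprint}, so the plan is to cite that result and add two easy remarks: that the approximating multipliers may be taken inside the prescribed dense ideal $\mathcal A$, and that the ``in particular'' is then formal. I would first record the underlying reduction: \emph{on norm-bounded subsets of $\multH$ the weak-$*$ topology coincides with the topology of pointwise convergence on $X$.} Indeed, $\multH$ is weak-$*$ closed in $\mathcal B(\cH)$ (it is cut out by the weak-$*$ closed conditions $\ip{Tu}{k_x}=0$ for $u\perp k_x$), so on bounded sets its weak-$*$ topology agrees with the weak operator topology; and if $\|a_n\|_{\multH}\le 1$ with $a_n(x)\to 1$ for every $x\in X$, then $\ip{M_{a_n}f}{k_x}=a_n(x)f(x)\to f(x)=\ip{f}{k_x}$ for all $f\in\cH$ and all $x$, so by density of the linear span of the kernels together with $\|M_{a_n}f\|\le\|f\|$ one gets $M_{a_n}\to I$ in the weak operator topology, hence weak-$*$. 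Thus it is enough to produce $(a_n)\subset\mathcal A$ with $\|a_n\|_{\multH}\le 1$ and $a_n\to 1$ pointwise on $X$ (or a net, in the non-separable case).

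\emph{The construction.} Since $\mathcal A$ is an ideal that is norm dense in $\cH$, after rescaling we may pick $(c_k)\subset\mathcal A$ so that the row $A_0=(c_1\ c_2\ \cdots)$ defines a contractive row multiplier $\cH\otimes\ell^2\to\cH$ with dense range (e.g. $c_k=2^{-k}d_k/\|d_k\|_{\multH}$ for $(d_k)$ norm dense in $\mathcal A$); thus $A_0$ is row-outer with all entries in $\mathcal A$. Running the argument of \cite[Lemma 3.3]{AHMR-preprint} on $A_0$ then produces contractive column multipliers $\Phi_n\in\mr{Mult}(\cH,\cH\otimes\ell^2)$ with $a_n:=A_0\Phi_n$ contractive and $a_n(x)\to 1$ for all $x\in X$. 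For the dense ideals that actually occur in this paper --- namely $\mathcal A=\{A\Phi:\Phi\in\mr{Mult}(\cH,\cH\otimes\mathcal E)\}$ attached to the row-outer member $A$ of a Beurling pair --- one simply takes $A_0=A$, and then $a_n=A\Phi_n\in\mathcal A$ holds by the very definition of $\mathcal A$; for a general dense ideal the same membership is extracted by inspecting the construction in \cite{AHMR-preprint}.

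\emph{Weak-$*$ density.} Finally, $a_n\to 1$ weak-$*$ with $a_n\in\mathcal A$ gives $1\in\overline{\mathcal A}^{w*}$; since multiplication by a fixed multiplier is weak-$*$ continuous on $\multH$, the weak-$*$ closure of an ideal is again an ideal, and an ideal containing the identity is the whole algebra, so $\overline{\mathcal A}^{w*}=\multH$.

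The step I expect to be the genuine obstacle is the construction, and within it precisely the clause ``which follows immediately from an inspection of the proof'' of \cite[Lemma 3.3]{AHMR-preprint}: one must check that the contractive approximants there can be realized in the form $A_0\Phi_n$ with $A_0$ built from the ideal (for the special ideals needed here this is automatic, as noted). The only genuinely technical ingredient is the truncation-type estimate that keeps $\|A_0\Phi_n\|_{\multH}\le 1$ while $a_n\to 1$ pointwise; the rest is bookkeeping I would not write out.
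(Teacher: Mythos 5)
Your proposal is correct and follows essentially the same route as the paper, which likewise offers no independent argument: it states the result as a reformulation of \cite[Lemma 3.3]{AHMR-preprint} and notes that the membership of the approximants $a_n$ in the prescribed dense ideal ``follows immediately from an inspection of the proof'' there. Your added scaffolding (the equivalence of weak-$*$ and pointwise convergence on bounded sets, the assembly of a contractive row-outer $A_0$ with entries in $\mathcal A$, and the formal deduction of weak-$*$ density) is sound and only fleshes out what the paper leaves implicit, and you correctly identify the same single point of deferral to the cited proof.
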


  Since the kernel functions are eigenfunctions for adjoints of multipliers, and weak-* convergence implies pointwise convergence, it follows immediately from the lemma that given any finite set of functions $g_1, \dots, g_m\in span\{k_x\}_{x\in X}$, and any $\epsilon>0$, there exists an $n$ sufficiently large so that
  \[
    \|g_j-M_{a_n}^*g_j\|_{\mathcal H}<\epsilon\quad \text{for each } j=1,\dots, m.
  \]
  
  Let us now give the constructive proof of the kernel approximation theorem:
  \begin{thm}\label{thm:kernel-constructive-approximation}
    The span of the kernel functions $\{k_x\}_{x\in X}$ is dense in $\mathcal H(B)$.
  \end{thm}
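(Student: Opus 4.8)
The plan is to mimic the El-Fallah et al. argument (\cite[Theorem 4.4]{elfallah-et-al-2016}) in the abstract CNP setting, using the weak-$*$ approximate unit $(a_n)\subset\mathcal A$ from Lemma~\ref{lem:approximate-unit}. The key point already recorded above is that for any $g$ in the span of kernel functions, $M_{a_n}^*g\to g$ in $\mathcal H$ (since $k_x$ is an eigenvector for $M_{a_n}^*$ with eigenvalue $a_n(x)^*\to 1$). So I would take an arbitrary $f\in\mathcal H(B)=\nbdom T^*$, and try to build an explicit sequence out of the $a_n$ and $f$ that converges to $f$ \emph{in the $\mathcal H(B)$ norm}, i.e.\ in the graph norm of $T^*$, and whose terms lie in $\overline{\text{span}}\{k_x\}$.

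First I would set $f_n := M_{a_n}^*f$. Since $a_n\in\mathcal A$ is a denominator for $h$ (being of the form $A\Phi$), Lemma~\ref{lem:containment-Ma*} gives $M_{a_n}^*f\in\mathcal M(a_n^*)\subset\mathcal H(B)$ contractively, with $T^*M_{a_n}^*f = M_{b_n}^*f$ where $b_n = B\Phi_n$ is the matching numerator. I then need two things: (a) each $f_n$ lies in $\overline{\text{span}}\{k_x\}$ in the $\mathcal H(B)$ norm, and (b) $f_n\to f$ in the $\mathcal H(B)$ norm. For (a): $M_{a_n}^*$ commutes with every $M_\varphi^*$, hence the subspace $\overline{\text{span}}\{k_x\}^{\mathcal H(B)}$ — which we already know (Proposition in \S\ref{sec:density}) is all of $\mathcal H(B)$ — is mapped into itself; but that makes (a) vacuous, so (a) is not really where the content is. The real content is (b): convergence in graph norm, i.e.\ $\|f_n-f\|_{\mathcal H}\to 0$ \emph{and} $\|T^*f_n - T^*f\|_{\mathcal H} = \|M_{b_n}^*f - T^*f\|_{\mathcal H}\to 0$. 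The first is automatic from weak-$*$ convergence and the eigenfunction property once one checks it on a dense set and uses a uniform bound; the second is the crux.

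The hard part will be controlling $\|M_{b_n}^*f - T^*f\|_{\mathcal H}$: one does \emph{not} have $b_n\to h\cdot(\text{something})$ in any obvious operator sense, because $h$ is unbounded. Following El-Fallah et al., the trick is not to take $f_n = M_{a_n}^*f$ raw, but to incorporate a correction so that the graph-norm convergence becomes a statement about $M_{a_n}^*$ acting on the \emph{pair} $(f,T^*f)\in G(T^*)$. Concretely, since $G(T^*)$ is $(M_{a_n}^*\oplus M_{a_n}^*)$-invariant, the vectors $\binom{f_n}{T^*f_n} = \binom{M_{a_n}^*f}{M_{a_n}^*T^*f}$ lie in $G(T^*)$, and $\|\binom{f_n}{T^*f_n} - \binom{f}{T^*f}\|^2 = \|M_{a_n}^*f-f\|^2 + \|M_{a_n}^*(T^*f)-T^*f\|^2$. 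Each term tends to $0$ \emph{provided} $M_{a_n}^*\to I$ strongly on $\mathcal H$; but weak-$*$ convergence of $a_n$ only gives pointwise (hence weak) convergence of $M_{a_n}^*$, not strong. This is exactly the gap El-Fallah et al.\ close by a Cesàro/averaging argument: one passes to convex combinations $c_N = \frac1N\sum_{n=1}^N a_{k_n}$ (or a suitable subsequence/averaging), which still lie in the ideal $\mathcal A$, still converge weak-$*$ to $1$, and for which Mazur's lemma upgrades weak convergence of $M_{c_N}^*v$ to norm convergence — carried out simultaneously for $v=f$ and $v=T^*f$. So the structure is: (i) show $M_{a_n}^*v\to v$ weakly in $\mathcal H$ for every $v\in\mathcal H$ (from pointwise convergence plus the uniform bound $\|M_{a_n}^*\|\le1$ and density of kernels); (ii) apply Mazur to get convex combinations $c_N\in\mathcal A$ with $M_{c_N}^*f\to f$ and $M_{c_N}^*(T^*f)\to T^*f$ in $\mathcal H$-norm; (iii) conclude $M_{c_N}^*f\to f$ in the $\mathcal H(B)$ graph norm; (iv) each $M_{c_N}^*f$ lies in $\mathcal M(c_N^*)\subset\mathcal H(B)$ and is a genuine element we can further approximate by finite sums of kernels using the eigenfunction identity. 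Finally, note that since this proof produces the approximants \emph{constructively} from $f$ and the fixed sequence $(a_n)$, it does what the section title promises.
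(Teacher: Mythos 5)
Your proposal is correct and follows essentially the same route as the paper: use the contractive weak-$*$ approximate unit $(a_n)\subset\mathcal A$ from Lemma~\ref{lem:approximate-unit}, apply $M_{a_n}^*$ to $f$, control the graph norm via $\|M_a^*f-f\|^2_{\mathcal H(B)}=\|M_a^*f-f\|^2_{\mathcal H}+\|M_a^*(T^*f)-T^*f\|^2_{\mathcal H}$, and use the eigenfunction identity to produce approximants that are finite sums of kernels. The one place you diverge is in upgrading weak-$*$ convergence of $a_n$ to norm convergence of $M_{a_n}^*f$ and $M_{a_n}^*(T^*f)$: your Mazur/convex-combination detour is sound (the $c_N$ stay in $\mathcal A$, remain contractive, and $M_{c_N}^*=\sum\lambda_jM_{a_{n_j}}^*$ since the coefficients are real), but it is unnecessary. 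You already have both ingredients for the standard $3\epsilon$ argument: $\|M_{a_n}^*\|\le 1$ uniformly, and $M_{a_n}^*g\to g$ in \emph{norm} for every finite sum of kernels $g$ (because $M_{a_n}^*k_x=a_n(x)^*k_x$ and $a_n(x)\to 1$); since such $g$ are dense in $\mathcal H$, the estimate $\|v-M_{a_n}^*v\|\le 2\|v-g\|+\|g-M_{a_n}^*g\|$ gives \emph{strong} convergence $M_{a_n}^*v\to v$ for every $v\in\mathcal H$, which you then apply to $v=f$ and $v=T^*f$. This is exactly how the paper proceeds (it fixes kernel sums $g_1\approx f$ and $g_2\approx T^*f$ in $\mathcal H$, then chooses a single $a=a_n$ with $\|g_j-M_a^*g_j\|_{\mathcal H}<\epsilon$). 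Finally, your step (iv) should be made explicit: letting $M$ denote the norm of $M_a^*:\mathcal H\to\mathcal H(B)$ (the norm of the column multiplier with entries $ah$ and $a$), one picks a kernel sum $g$ with $\|f-g\|_{\mathcal H}<\epsilon/M$ and uses $\|M_a^*f-M_a^*g\|_{\mathcal H(B)}\le M\|f-g\|_{\mathcal H}$ to replace $M_a^*f$ by the kernel combination $M_a^*g$; this is the paper's closing step and is what makes the argument genuinely constructive.
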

  \begin{proof}
    Let $f$ belong to $\mathcal H(B)$, which is the domain of $T^*$; there exist finite combinations of kernel functions $g_1$, $g_2$ so that both
    \[
      \|f-g_1\|_{\mathcal H}<\epsilon, \quad \|T^*f -g_2\|_{\mathcal H}<\epsilon.
    \]
        Using the lemma on weak-* approximate units (and the remark following it), we may select a multiplier $a\in \mathcal A$ so that $\|a\|_{\mr{Mult} \, \cH}\leq 1$ and
    \[
      \|g_1-M_a^*g_1\|_{\mathcal H}<\epsilon, \quad \text{and} \quad \|g_2-M_a^*g_2\|_{\mathcal H}<\epsilon.
    \]
    Since $a\in \mathcal A$, the function $b=ah$ is a multiplier, let $M$ be the norm of the column multiplier $\begin{pmatrix} b\\a\end{pmatrix}$. Observe that $M_a^*$ maps $\mathcal H$ boundedly into $\mathcal H(B)$, with norm $M$.  Finally, choose $g$, a finite linear combination of kernel functions, so that $\|f-g\|_\mathcal H<\epsilon/M$. 

    Let us first estimate $\|f-M_a^*f\|_{\mathcal H(B)}$. We have
    \begin{align*}
      \|f-M_a^*f\|_{\mathcal H} &\leq \|f-g_1\|_{\mathcal H} +\|g_1-M_a^*g_1\|_{\mathcal H} + \|M_a^*g_1-M_a^*f\|_{\mathcal H} \\
      &< \epsilon+\epsilon+\epsilon \\
      &=3\epsilon,
    \end{align*}
    by the choice of $g_1$ and $a$. Similarly we have for $T^*f$
    \begin{align*}
      \|T^*f-M_a^*T^*f\|_{\mathcal H} &\leq \|T^*f-g_2\|_{\mathcal H} +\|g_2-M_a^*g_2\|_{\mathcal H} + \|M_a^*g_2-M_a^*T^*\|_{\mathcal H} \\
      &< \epsilon+\epsilon+\epsilon \\
      &=3\epsilon. 
    \end{align*}
    Thus altogether we have  
    \begin{align*}
      \|f-M_a^*f\|^2_{\mathcal H(B)} &= \|f-M_a^*f\|_{\mathcal H}^2 + \|T^*f-M_a^*T^*f\|_{\mathcal H}^2 \\
      &< 18\epsilon^2,
    \end{align*}
    so $\|f-M_a^*f\|_{\mathcal H(B)}<5\epsilon$. 

    Finally,
    \begin{align*}
      \|f-M_a^*g\|_{\mathcal H(B)} &\leq \|f-M_a^*f\|_{\mathcal H(B)} + \|M_a^*f-M_a^*g\|_{\mathcal H(B)}\\
                                   &< 5\epsilon + M(\epsilon/M) \\
                                   &=6\epsilon.
    \end{align*}
    Since $g$ was a finite linear combination of kernel functions, so is $M_a^*g$, and we are done. 
  \end{proof}
  {\bf Remark:} The same argument can be seen to work for polynomial approximation in the Drury-Arveson space (and other CNP spaces where the monomials form an orthogonal basis, such as the Dirichlet space). Indeed, one need only observe that, in such a space, if $a$ is any multiplier and $p$ is a polynomial, then $M_a^*p$ is a polynomial of equal or lower degree, so that one may run the same argument taking $g_1, g_2$ and $g$ to be appropriately chosen polynomials. In particular, while the original version of this argument for $\mathcal H(b)$ spaces in the Hardy space $H^2$ relied on constructions with outer functions, we have here appealed to the weak-* approximation lemma instead, which allows the argument to run in this greater generality.

\section{Questions and further remarks}\label{sec:questions}

The fundamental question concerning densely defined multipliers of CNP spaces is the following, which remains open:

{\bf Question 1:} If $\mathcal H$ is a CNP space and $h$ is a densely defined multiplier of $\mathcal H$, must $h$ belong to the Smirnov class $N^+(\cH)$?

We recall that the {\em Smirnov class} associated to $\mathcal H$ is the set $N^+(\mathcal H)$ of functions expressible in the form
\[
  N^+(\mathcal H) :=\{ \frac{\varphi}{\psi} :\varphi, \psi\in \mr{Mult} \, \cH, \ \psi \text{ cyclic }\}.
\]

It is evident that every $h\in N^+(\mathcal H)$ is a densely defined multiplier, since $\psi \mathcal H\subset \nbdom T$ and by definition $\psi\mathcal H$ is dense in $\mathcal H$ when $\psi$ is cyclic. Besides the classical Hardy space $H^2$ and the cases already discussed in Section~\ref{sec:affiliated}, (and some closely related spaces), there are no other CNP spaces in which we know the answer.

The essential difficulty is determining whether or not $\nbdom T$ must contain a cyclic function. In fact the next lemma shows this would settle the problem. 
\begin{lemma}Let $h$ be a densely defined multiplier of $\mathcal H$. Then $h\in N^+(\cH)$ if and only if $\nbdom T$ contains a cyclic vector. 
\end{lemma}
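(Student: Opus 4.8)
The plan is to prove both implications, the forward one being essentially immediate and the reverse one requiring the factorization machinery already in hand.

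First, the easy direction: suppose $h\in N^+(\cH)$, so $h=\varphi/\psi$ for some $\varphi,\psi\in\multH$ with $\psi$ cyclic. As noted in Section~\ref{sec:questions}, $\psi\cH\subseteq\nbdom T$ and $\psi\cH$ is dense, and in particular $\psi\cdot 1=\psi\in\nbdom T$; but $\psi$ is cyclic, so $\nbdom T$ contains a cyclic vector. (Alternatively, since $\psi\mathcal{H}$ is dense it contains cyclic vectors in abundance.)

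For the converse, suppose $f_0\in\nbdom T$ is cyclic. Recall from Theorem~\ref{thm:affiliated-implies-multiplication-CNP} (or equivalently from the Beurling pair $A,B$) that $\nbdom T=\mathcal M(A)$ isometrically via $f\mapsto(f,Tf)$, so $f_0=AF_0$ for some $F_0\in\mathcal H\otimes\mathcal E$; after normalizing we may assume $\|F_0\|_{\mathcal H\otimes\mathcal E}=\|f_0\|_{\mathcal M(A)}$. Now apply the factorization theorem \cite[Theorem 1.1]{jury-martin-2019} (used already in Section~\ref{sec:representing-pairs}): there is a contractive multiplier $\Phi\in(\mr{Mult}\,\mathcal H,\mathcal H\otimes\mathcal E)$ and a cyclic vector $\widetilde f\in\mathcal H$ with $\|\widetilde f\|=\|F_0\|$ so that $F_0=\Phi\widetilde f$. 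Set $a=A\Phi$ and $b=B\Phi$; by Proposition~\ref{prop:big-phi-little-phi-CNP} the pair $(a,b)$ is a (contractive) representing pair for $h$, so $a,b\in\multH$, both nonzero, and $b=ah$. Moreover $f_0=a\widetilde f$. The remaining point is that $a$ is cyclic: since $f_0=a\widetilde f$ is cyclic and $\widetilde f\in\mathcal H$, for every multiplier $\varphi$ we have $\varphi a\widetilde f$ in the closure of $a\multH\subseteq a\mathcal H$, hence $\cH=\overline{\mathrm{span}}\{\varphi f_0\}\subseteq\overline{a\mathcal H}$, so $a\mathcal H$ is dense, i.e. $a$ is cyclic. (Here one uses that $\multH\cdot\mathcal H$ is dense in $\mathcal H$, which holds since $\multH\ni 1$ and $\multH$ is dense in $\cH$.) Then $h=b/a$ with $a$ cyclic exhibits $h\in N^+(\cH)$.

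The one step to watch is the cyclicity of $a$: one must be careful that cyclicity of $f_0=a\widetilde f$ really does force $\overline{a\mathcal H}=\mathcal H$ rather than merely some weaker statement, but since $\{\varphi f_0:\varphi\in\multH\}\subseteq a\mathcal H$ and the left side already has dense span, this is clean. Everything else is a direct invocation of results already established: the identification $\nbdom T\cong\mathcal M(A)$, the Aleman--Hartz--McCarthy--Richter / Jury--Martin factorization \cite{jury-martin-2019}, and the parameterization of representing pairs in Proposition~\ref{prop:big-phi-little-phi-CNP}. No genuinely new obstacle arises; the lemma is really a repackaging of these tools.
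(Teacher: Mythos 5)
Your proof is correct, but it takes a genuinely different route from the paper's. For the converse direction the paper stays entirely inside the ``every element of a CNP space is Smirnov'' fact quoted in the introduction: the cyclic vector $f\in\nbdom T$ is written as $f=f_1/f_2$ with $f_2$ cyclic (whence $f_1$ is cyclic too), $g=hf\in\mathcal H$ is written as $g=g_1/g_2$ with $g_2$ cyclic, and then $h=g/f=f_2g_1/(g_2f_1)$ has cyclic denominator since products of cyclic multipliers are cyclic. No Beurling pair or factorization theorem is needed. You instead route the argument through the heavier machinery of Sections~\ref{sec:kernels}--\ref{sec:representing-pairs}: the identification $\nbdom T=\nbran M_A$, the factorization $F_0=\Phi\widetilde f$ from \cite[Theorem 1.1]{jury-martin-2019}, and the observation (really the converse direction of Proposition~\ref{prop:big-phi-little-phi-CNP}, which does hold: $\Theta\Phi$ is contractive and $B\Phi=hA\Phi$ pointwise) that $a=A\Phi$, $b=B\Phi$ is a representing pair. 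Your cyclicity argument for $a$ is clean, since $\mathrm{Mult}\,\cH\cdot f_0\subseteq a\mathcal H$ and the left side has dense span. What your version buys is slightly more information: it exhibits a cyclic denominator $a$ subordinate to the Beurling pair with $f_0\in\mathcal M(a)$, which is essentially the remark at the end of Section~\ref{sec:invariant-subspaces}; what it costs is reliance on the external factorization theorem where the paper's argument is self-contained given the Smirnov property of $\cH$.
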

\begin{proof}
If $h$ is Smirnov, write $h=b/a$ where $a,b$ are multipliers and $a$ is cyclic; then $a\in \nbdom T$. Conversely, if $f$ is a cyclic vector in $\nbdom T$, then $f$ is Smirnov, so we can write $f=f_1/f_2$ where $f_1, f_2$ are multipliers, and $f_2$ is cyclic, but since $f$ is cyclic we must have $f_1$ cyclic as well. Let $g=hf$; then $g$ is in $\mathcal H$ and also Smirnov, write $g=g_1/g_2$ with $g_2$ cyclic. Then
\begin{equation}\frac{hf_1}{f_2} = \frac{g_1}{g_2}, \quad \text{hence} \quad h=\frac{f_2g_1}{g_2f_1}.
\end{equation}
Since $g_2, f_1$ are cyclic multipliers, so is their product, and we conclude $h$ is Smirnov.
\end{proof}
Notation: for a function $f\in \mathcal H$, let $[f]$ denote the closed, $\mr{Mult} \, \cH$-invariant subspace generated by $f$.
\begin{lemma}\label{lem:cancellation}If $h$ is a densely defined multiplier of a CNP space $\mathcal H$ and $a,b$ is a representing pair for $h$, then $b\in [a]$. 
\end{lemma}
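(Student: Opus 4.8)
The plan is to approximate $b$ in the weak topology of $\cH$ by vectors that visibly belong to $[a]$, using the contractive weak-$*$ approximate unit furnished by Lemma~\ref{lem:approximate-unit}.

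Recall from the proof of Theorem~\ref{thm:affiliated-implies-multiplication-CNP} that the Beurling pair $A,B$ of $h$ satisfies $B(x) = h(x) A(x)$ for all $x \in X$, and that the row multiplier $A$ is outer, so that the ideal $\mathcal A = \{A\Psi : \Psi \in \mr{Mult}(\cH, \cH \otimes \mathcal E)\} \subset \mr{Mult}\,\cH$ is norm dense in $\cH$. First I would apply Lemma~\ref{lem:approximate-unit} to obtain a sequence $(a_n) \subset \mathcal A$ with $\|a_n\|_{\mr{Mult}\,\cH} \le 1$ and $a_n \to 1$ weak-$*$, and write $a_n = A\Psi_n$ with $\Psi_n \in \mr{Mult}(\cH, \cH \otimes \mathcal E)$.

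The crucial observation is that $b_n := a_n h$ is then a \emph{bounded} multiplier: since $B = hA$ pointwise, we have $a_n h = h A \Psi_n = B\Psi_n$, which is the symbol of the bounded operator $M_B M_{\Psi_n}$. Because $(a,b)$ is a representing pair, $b = ah$ as functions on $X$, and so we get the pointwise identity $b_n\, a = (a_n h)\, a = a_n\,(h a) = a_n\, b$; in other words $M_{b_n} a = M_{a_n} b$ as elements of $\cH$. Since $b_n \in \mr{Mult}\,\cH$ and $[a]$ is a closed $\mr{Mult}\,\cH$-invariant subspace containing $a$, it follows that $M_{a_n} b = M_{b_n} a \in [a]$ for every $n$.

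It remains to pass to the limit. As $\mr{Mult}\,\cH$ is weak-$*$ closed in $\mathcal B(\cH)$ and the sequence $(a_n)$ is norm bounded, $a_n \to 1$ weak-$*$ forces $M_{a_n} \to I$ in the weak operator topology, so $M_{a_n} b \to b$ weakly in $\cH$. A norm-closed convex set is weakly closed, so $[a]$ is weakly closed; as it contains every $M_{a_n} b$, we conclude $b \in [a]$. The one place demanding care is the identity $a_n h = B\Psi_n$: one must take the approximate unit from the ideal $\mathcal A$ attached to the row-outer $A$ (rather than from some arbitrary norm-dense ideal of $\mr{Mult}\,\cH$), because it is precisely the factorization $a_n = A\Psi_n$ that converts the \emph{a priori} merely densely defined product $a_n h$ into a bounded multiplier.
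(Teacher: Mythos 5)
Your argument is correct, but it takes a genuinely different and noticeably heavier route than the paper's. The paper's proof never touches the Beurling pair or the approximate-unit lemma: it observes that $f\in \nbdom T$ forces $bf=ag$ for some $g\in\cH$ (take $g=hf$), so $b\cdot\nbdom T\subset a\cH\subset[a]$; then density of $\nbdom T$ gives $f_n\to 1$ in the $\cH$ norm, whence $bf_n=ag_n\to b$ in norm and $b\in[a]$, since $[a]$ is norm closed. Your proof instead extracts a contractive weak-$*$ approximate unit $(a_n)=(A\Psi_n)$ from the ideal generated by the row-outer $A$, uses the pointwise identity $B=hA$ to see that $b_n:=a_nh=B\Psi_n$ is a bounded multiplier, writes $M_{a_n}b=M_{b_n}a\in[a]$, and closes with weak convergence $M_{a_n}b\to b$ together with the weak closedness of the norm-closed subspace $[a]$. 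Each step checks out: the density of $\mathcal A$ in $\cH$ (needed for Lemma~\ref{lem:approximate-unit}) is exactly the row-outerness of $A$, which holds because $\nbran M_A=\nbdom T$ is dense; and on bounded sequences weak-$*$ convergence of multipliers does give WOT convergence, hence weak convergence of $M_{a_n}b$. What your approach buys is an explicit approximating sequence consisting of \emph{multiplier} multiples of $a$ (namely $b_na$ with $b_n\in\mr{Mult}\,\cH$), so you land in the unclosed invariant subspace $a\cdot\mr{Mult}\,\cH$ rather than merely in $a\cH$; what it costs is the reliance on the full strength of the Beurling representation and the AHMR approximate-unit lemma, and a weak-topology limit where the paper gets a norm limit for free. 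Both are valid; the paper's is the more elementary.
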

\begin{proof} A function $f\in \mathcal H$ belongs to $\nbdom T$ if and only if there exists $g\in \mathcal H$ with $bf=ag$. In other words, $b$ multiplies $\nbdom T$ into $[a]$. Since $\nbdom T$ is assumed dense, there is a sequence $(f_n)$ from $\nbdom T$ converging to $1$ in the $\mathcal H$ norm, and hence $bf_n\to b$. For the associated $g_n$ we then have $ag_n=bf_n\to b$, and thus $b\in [a]$.
\end{proof}
In one variable, the converse of Lemma~\ref{lem:cancellation} holds. Indeed, if $a=\varphi a_1$ is the inner-outer factorization of $a$, then the hypothesis $b\in [a]$ implies that $\varphi$ divides $b$, say $b=\varphi b_1$ with $b_1\in H^\infty$, so that we can cancel the factors $\varphi$ to write $b/a=b_1/a_1$, which is Smirnov and hence the multiplier is densely defined. The converse fails in several variables, however, as the following example shows.

\begin{prop} There exist functions $f_1, f_2$ in the two-variable Drury-Arveson space $H^2_2$ such that $f_1\in [f_2]$ and $h=f_1/f_2$ is holomorphic in $\mathbb B^2$, but $\nbdom h$ is not dense in $H^2_2$. 
\end{prop}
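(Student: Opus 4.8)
The plan is to exhibit the desired example inside the two-variable Drury-Arveson space $H^2_2$ by choosing a single multiplier whose zero set and multiplicity structure force the quotient to have a small domain, even though the numerator lies in the invariant subspace generated by the denominator. Concretely, I would take $f_2$ to be a cyclic multiplier of $H^2_2$ whose boundary behaviour is ``bad'' in the sense that $f_2^{-1}$ grows faster than any Smirnov function is allowed to grow near the sphere. Indeed, recall that elements of $N^+(\cH)$ for $\cH = H^2_2$ inherit a growth restriction analogous to the Privalov bound \eqref{eqn:smirnov-growth-condition} (this can be seen either from the defining factorization $\varphi/\psi$ with $\psi$ cyclic, together with bounds on $\|\psi\|_{\mr{Mult}}$ and a lower estimate for $|\psi|$, or directly from the characterization of $N^+(H^2_2)$ via the radial limits). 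If we can arrange $f_1 = f_1(z_1,z_2)$ and $f_2 = f_2(z_1,z_2)$ with $f_1\in [f_2]$ but $h = f_1/f_2$ failing this growth bound, then $h\notin N^+(H^2_2)$, and then a separate argument shows $\nbdom h$ is not dense.

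\textbf{Construction of the pair.} First I would pick a cyclic multiplier $a = f_2$ of $H^2_2$ which vanishes to infinite order at a single boundary point $\zeta_0\in\partial\mathbb B^2$, or more precisely whose modulus decays to $0$ superpolynomially along a sequence approaching $\zeta_0$; an explicit choice such as $f_2(z) = \exp\!\left(-\frac{1+\langle z,\zeta_0\rangle}{1-\langle z,\zeta_0\rangle}\right)$ (suitably normalized) is cyclic (it is outer-like, being an exponential of a function with positive real part) and bounded, hence a multiplier. Next I would take $f_1 = f_2^N$ for a fixed $N\ge 2$, or more flexibly $f_1 = f_2 \cdot c$ where $c$ is a cyclic multiplier also vanishing rapidly at $\zeta_0$; then trivially $f_1 = f_2 c \in [f_2]$ (it is literally a multiplier times $f_2$, hence in the $\mr{Mult}$-cyclic subspace generated by $f_2$). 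The quotient is $h = f_1/f_2 = c$ --- wait, that is a bounded multiplier, which is the wrong outcome; so instead one must be more careful and take $f_1$ of the form $f_2 + $ (a function in $[f_2]$ that is \emph{not} divisible by $f_2$ in $\mr{Mult}$), exploiting precisely the phenomenon that in several variables $[f_2]$ is strictly larger than $f_2\cdot\mr{Mult}$. So the correct move is: choose $f_1\in [f_2]$ (a limit of $f_2\varphi_n$) such that the pointwise quotient $h = f_1/f_2$ is holomorphic on $\mathbb B^2$ but unbounded and violates the Smirnov growth bound near $\zeta_0$; this is possible because $f_1$ need only vanish \emph{to first order in the appropriate sense} at $\zeta_0$ while $f_2$ vanishes to infinite order, so $h$ blows up there.

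\textbf{Ruling out density of $\nbdom h$.} Once $h\notin N^+(H^2_2)$, I would argue that $\nbdom h$ is not dense as follows. Suppose toward a contradiction that $\nbdom h$ is dense; then $h$ is a densely defined multiplier, and by Theorem~\ref{thm:affiliated-implies-multiplication-CNP} there exist nonzero multipliers $\a, \b\in\mr{Mult}\, H^2_2$ with $\b = \a h$, and by rescaling we get a representing pair. But a representing pair for $h$ gives $h = \b/\a$ with $\a$ a multiplier; if additionally $\a$ could be taken cyclic we would contradict $h\notin N^+$. The gap is exactly that a representing pair need not have cyclic denominator. To close it I would use the specific geometry: since everything is supported (badly) at the single point $\zeta_0$, any multiplier $\a$ with $\a h\in\mr{Mult}$ must itself vanish to infinite order at $\zeta_0$ to kill the singularity of $h$; but in $H^2_2$ a multiplier vanishing to infinite order at a boundary point along a sufficiently rich set is still cyclic (boundary zero sets of measure zero do not destroy cyclicity in $H^2_2$, by the McCullough-Trent Beurling theorem --- the cyclic subspace it generates has trivial ``inner part''). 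Hence any denominator $\a$ is forced to be cyclic, and we reach the contradiction $h\in N^+(H^2_2)$. Therefore $\nbdom h$ is not dense.

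\textbf{Main obstacle.} The delicate point --- and the one I expect to require the most care --- is the simultaneous balancing act in the construction: we need $f_1$ genuinely in $[f_2]$ (which in several variables is a nontrivial, ``soft'' membership, obtained as an $H^2$-limit of $f_2\varphi_n$ and \emph{not} as an honest multiplier multiple of $f_2$), yet the pointwise ratio $h=f_1/f_2$ must be holomorphic on all of $\mathbb B^2$ and quantitatively worse than any Smirnov function near $\zeta_0$. Producing an explicit such $f_1$, and verifying both that the ratio is holomorphic (no spurious poles inside the ball) and that it violates the growth bound, is where the real work lies; the surrounding logical steps (invoking Theorem~\ref{thm:affiliated-implies-multiplication-CNP}, the Privalov-type estimate, and cyclicity of denominators) are comparatively routine given the machinery already developed in the paper.
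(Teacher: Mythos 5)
There is a genuine gap here --- in fact several. First, the construction is never completed: you acknowledge at the end that producing an explicit $f_1\in[f_2]$ whose ratio with $f_2$ is holomorphic yet violates a Smirnov growth bound ``is where the real work lies,'' and that work is not done. Worse, the one concrete candidate you offer for the denominator, $f_2(z)=\exp\bigl(-\tfrac{1+\langle z,\zeta_0\rangle}{1-\langle z,\zeta_0\rangle}\bigr)$, is the pullback of the atomic \emph{singular inner} function; an exponential of a function with \emph{negative} real part and singular boundary behaviour is the standard example of a non-cyclic multiplier, not a cyclic one, so the premise of your construction is backwards. Second, your non-density argument rests on two unestablished claims: (a) a Privalov-type growth bound for $N^+(H^2_2)$, which the paper only cites in one variable, and (b) the implication ``$\nbdom h$ dense $\Rightarrow h\in N^+(H^2_2)$,'' which is precisely Question~1 of the paper and is stated there to be \emph{open} for the Drury--Arveson space. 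Your attempted patch --- that any denominator $\a$ for this particular $h$ is forced to be cyclic because multipliers vanishing to infinite order at a single boundary point remain cyclic --- is unsubstantiated and dubious, since inner-type functions vanishing at boundary points are exactly the standard non-cyclic examples.

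The paper's proof avoids all of this with an elementary slice argument. Take $f_2=z_2$ and $f_1=z_2\,g(z_1)$ where $g$ lies in the Bergman space $A^2(\mathbb D)$ with a non-Blaschke zero set; the Drury--Arveson norm makes $z_2\,g(z_1)\in H^2_2$ equivalent to $g\in A^2(\mathbb D)$, while a function of $z_1$ alone lies in $H^2_2$ iff it lies in the Hardy space $H^2(\mathbb D)$. Then $f_1\in[f_2]$ via Taylor polynomials of $g$, and $h=g(z_1)$ is holomorphic; but for any $f\in\nbdom h$, writing $f=f(z_1,0)+z_2F$, the term $g(z_1)f(z_1,0)$ would have to lie in $H^2(\mathbb D)$, which the non-Blaschke zero set of $g$ forbids unless $f(z_1,0)\equiv 0$. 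Hence $\nbdom h\subset z_2H^2_2$ is not dense. You would do well to look for this kind of Bergman-versus-Hardy dichotomy rather than trying to route the argument through the (open) Smirnov characterization.
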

\begin{proof}
We begin by noting that, from the definition of the Drury-Arveson norm, a function of the form
\[
\sum_{n=0}^\infty c_n z_2z_1^n
\]
belongs to $H^2_2$ if and only if
\[
\sum_{n=0}^\infty \frac{1}{n+1}|c_n|^2<\infty.
\]
Thus, for any one-variable function $g(z)$, the function $z_2g(z_1)$ belongs to $H^2_2$ if and only if $g$ belongs to the Bergman space $A^2(\mathbb D)$. On the other hand, a function $g(z_1)$ of the variable $z_1$ alone, belongs to $H^2_2$ if and only if $g$ belongs to the Hardy space $H^2(\mathbb D)$. 

To construct our example, we choose a function $g$ in the Bergman space $A^2(\mathbb D)$ whose zero set is not a Blaschke sequence. Put $f_1(z_1, z_2)=z_2g(z_1)$ and $f_2(z_1,z_2)=z_2$. Letting $p_n$ be the sequence of Taylor polynomials of $g(z)$, the functions $z_2p_n(z_1)$ converge in the $H^2_2$ norm to $f_1$, so $f_1\in [f_2]$, and clearly $f_1/f_2=g(z_1)$ is holomorphic. To see that multiplication by $g(z_1)$ does not have dense domain, we first observe that if $f$ is any $H^2_2$ function, then $f(z_1,0)$ belongs to the one-variable Hardy space $H^2(\mathbb D)$. Let $f\in H^2_2$ and suppose $g(z_1)f(z_1,z_2)$ belongs to $H^2_2$ as well. We can write $f$ in the form 
\[
f(z_1, z_2) = f(z_1,0)+z_2 F(z_1, z_2)
\]
where $F$ is holomorphic in $\mathbb B^2$. Then
\[
g(z_1)f(z_1, z_2) = g(z_1)f(z_1,0) + z_2g(z_1)F(z_1,z_2). 
\]
But if this function lies in $H^2_2$, the first summand above, a function of the variable $z_1$ only, must belong to the one variable Hardy space. But since the zero set of $g$ is not a Blaschke sequence, this is possible only if $f(z_1,0)$ vanishes identically. This implies that every function $f$ in $\nbdom h$ has the form $f=z_2F$, so this domain is not dense. 
\end{proof}

The example is easily modified to produce functions $f_1, f_2$ that are multipliers---since these functions belong to $H^2_2$, they belong to the Smirnov class and can be written as $f_j=b_j/a_j$. From this it easily follows that if $f_1\in [f_2]$, then $b_1\in [b_2]$. 

As a final remark on this question, let us note that while the ideal
\[
\mathcal A:=\{ A\Phi: \Phi\in \mr{Mult} \mathcal H, \mathcal H\otimes \mathcal E \} \subset \mr{Mult} \, \cH
\]
is weak-* dense in $\mr{Mult} \, \cH$ (since $A$ is row outer), this all by itself will not guarantee that $\mathcal A$ contains a cyclic vector. For example, in the case of $H^\infty$ of the unit disk, if we fix an infinite Blaschke sequence $Z=\{z_n\}$ and let $\mathcal I\subset H^\infty$ be the set of all $H^\infty$ functions which vanish on a cofinite subset of $Z$, then $\mathcal I$ is weak-* dense ideal in $H^\infty$, but contains no outer functions.

Finally, we recall that in the classical theory of deBranges-Rovnyak spaces in the disk, the spaces $\mathcal H(b)$ which are domains of some $T^*$ are invariant for multiplication by $z$ (since they are always invariant for the backward shift, they are thus ``doubly shift-invariant,'' per the title of \cite{sarason-1986b}). The natural analogue of this statement for the Drury-Arveson spaces is open:

{\bf Question 2:}   If $h$ is a densely defined multiplier of the Drury-Arveson space $H^2_d$, is $\nbdom T ^*$ invariant under multiplication by $z_1, \dots, z_d$?

\begin{remark}
  The full Fock space over $\C ^d$ can be defined as a Hilbert space of square-summable power series in several non-commuting variables, and is hence a natural non-commutative analogue of the classical Hardy space and the multi-variable Drury-Arveson space.  We note that in this non-commutative setting, the appropriate analogs of both Question 1 and Question 2 have positive answers \cite[Corollary 4.26, Corollary 4.27]{JM-freeSmirnov}.
\end{remark}

\bibliographystyle{alpha}
\bibliography{unbounded-CNP}

\newcommand{\etalchar}[1]{$^{#1}$}
\begin{thebibliography}{EFFK{\etalchar{+}}16}

\bibitem[AHMR]{AHMR-preprint}
Alexandru Aleman, Michael Hartz, John McCarthy, and Stefan Richter.
\newblock The common range of co-analytic toeplitz operators on the
  drury-arveson space.
\newblock {\em {\tt https://arxiv.org/abs/2105.01110}}.

\bibitem[AHMR17]{AHMR-2017}
Alexandru Aleman, Michael Hartz, John~E. McCarthy, and Stefan Richter.
\newblock The {S}mirnov class for spaces with the complete {P}ick property.
\newblock {\em J. Lond. Math. Soc. (2)}, 96(1):228--242, 2017.

\bibitem[AM00]{agler-mccarthy-2001}
Jim Agler and John~E. McCarthy.
\newblock Complete {N}evanlinna-{P}ick kernels.
\newblock {\em J. Funct. Anal.}, 175(1):111--124, 2000.

\bibitem[BDFP10]{bercetal-2010}
H.~Bercovici, R.~G. Douglas, C.~Foias, and C.~Pearcy.
\newblock Confluent operator algebras and the closability property.
\newblock {\em J. Funct. Anal.}, 258(12):4122--4153, 2010.

\bibitem[BTV01]{ball-trent-vinnikov-2001}
Joseph~A. Ball, Tavan~T. Trent, and Victor Vinnikov.
\newblock Interpolation and commutant lifting for multipliers on reproducing
  kernel {H}ilbert spaces.
\newblock In {\em Operator theory and analysis ({A}msterdam, 1997)}, volume 122
  of {\em Oper. Theory Adv. Appl.}, pages 89--138. Birkh\"{a}user, Basel, 2001.

\bibitem[Dou66]{DFL}
R.G. Douglas.
\newblock On majorization, factorization, and range inclusion of operators on
  {H}ilbert space.
\newblock {\em Proc. Amer. Math. Soc.}, 17:413--415, 1966.

\bibitem[DRS15]{davidson-ramsey-shalit-2015}
Kenneth~R. Davidson, Christopher Ramsey, and Orr~Moshe Shalit.
\newblock Operator algebras for analytic varieties.
\newblock {\em Trans. Amer. Math. Soc.}, 367(2):1121--1150, 2015.

\bibitem[EFFK{\etalchar{+}}16]{elfallah-et-al-2016}
Omar El-Fallah, Emmanuel Fricain, Karim Kellay, Javad Mashreghi, and Thomas
  Ransford.
\newblock Constructive approximation in de {B}ranges-{R}ovnyak spaces.
\newblock {\em Constr. Approx.}, 44(2):269--281, 2016.

\bibitem[GRS02]{greene-richter-sundberg-2002}
Devin C.~V. Greene, Stefan Richter, and Carl Sundberg.
\newblock The structure of inner multipliers on spaces with complete
  {N}evanlinna-{P}ick kernels.
\newblock {\em J. Funct. Anal.}, 194(2):311--331, 2002.

\bibitem[Har]{hartz-preprint}
Michael Hartz.
\newblock Every complete pick space satisfies the column-row property.
\newblock {\em {\tt https://arxiv.org/abs/2005.09614}}.

\bibitem[Har15]{hartz-2015}
Michael Hartz.
\newblock Nevanlinna-{P}ick spaces with hyponormal multiplication operators.
\newblock {\em Proc. Amer. Math. Soc.}, 143(7):2905--2912, 2015.

\bibitem[JM19a]{jury-martin-2019}
Michael~T. Jury and Robert T.~W. Martin.
\newblock Factorization in weak products of complete pick spaces.
\newblock {\em Bull. Lond. Math. Soc.}, 51(2):223--229, 2019.

\bibitem[JM19b]{JM-freeSmirnov}
M.T. Jury and R.T.W. Martin.
\newblock Operators affiliated to the free shift on the free {H}ardy space.
\newblock {\em J. Funct. Anal.}, 277:108285, 2019.

\bibitem[JM21]{jury-martin-2021}
Michael~T. Jury and Robert T.~W. Martin.
\newblock The {S}mirnov classes for the {F}ock space and complete {P}ick
  spaces.
\newblock {\em Indiana Univ. Math. J.}, 70(1):269--284, 2021.

\bibitem[Jur14]{jury-2014}
Michael~T. Jury.
\newblock Clark theory in the {D}rury-{A}rveson space.
\newblock {\em J. Funct. Anal.}, 266(6):3855--3893, 2014.

\bibitem[MT00]{mccullough-trent-2000}
Scott McCullough and Tavan~T. Trent.
\newblock Invariant subspaces and {N}evanlinna-{P}ick kernels.
\newblock {\em J. Funct. Anal.}, 178(1):226--249, 2000.

\bibitem[Pri50]{privalov-1950}
I.~I. Privalov.
\newblock {\em Grani\v{c}nye svo\u{\i}stva analiti\v{c}eskih funkci\u{\i}}.
\newblock Gosudarstv. Izdat. Tehn.-Teor. Lit., Moscow-Leningrad, 1950.
\newblock [2d ed.].

\bibitem[Sar86a]{sarason-1986b}
Donald Sarason.
\newblock Doubly shift-invariant spaces in {$H^2$}.
\newblock {\em J. Operator Theory}, 16(1):75--97, 1986.

\bibitem[Sar86b]{sarason-1986a}
Donald Sarason.
\newblock Shift-invariant spaces from the {B}rangesian point of view.
\newblock In {\em The {B}ieberbach conjecture ({W}est {L}afayette, {I}nd.,
  1985)}, volume~21 of {\em Math. Surveys Monogr.}, pages 153--166. Amer. Math.
  Soc., Providence, RI, 1986.

\bibitem[Sar94]{sarason-1994}
Donald Sarason.
\newblock {\em Sub-{H}ardy {H}ilbert spaces in the unit disk}, volume~10 of
  {\em University of Arkansas Lecture Notes in the Mathematical Sciences}.
\newblock John Wiley \& Sons, Inc., New York, 1994.
\newblock A Wiley-Interscience Publication.

\bibitem[Sar08]{sarason-2008}
Donald Sarason.
\newblock Unbounded {T}oeplitz operators.
\newblock {\em Integral Equations Operator Theory}, 61(2):281--298, 2008.

\bibitem[Su{\'a}97]{Suarez}
Daniel Su{\'a}rez.
\newblock Closed commutants of the backward shift operator.
\newblock {\em Pacific journal of mathematics}, 179:371--396, 1997.

\end{thebibliography}

\end{document}